\numberwithin{equation}{section}
\newtheorem{theorem}{Theorem}[section]
\newtheorem{corollary}[theorem]{Corollary}
\newtheorem{lemma}[theorem]{Lemma}
\newtheorem{prop}[theorem]{Proposition}
\theoremstyle{definition}
\newtheorem{remark}[theorem]{Remark}
\theoremstyle{definition}
\newtheorem{definition}[theorem]{Definition}
\theoremstyle{definition}
\theoremstyle{definition}
\def\dashint{\operatorname%
{\,\,\text{\bf-}\kern-.98em\DOTSI\intop\ilimits@\!\!}}
\def\\det{\text{\det}}
\def\Xint#1{\mathchoice
 {\XXint\displaystyle\textstyle{#1}}%
 {\XXint\textstyle\scriptstyle{#1}}%
 {\XXint\scriptstyle\scriptscriptstyle{#1}}%
 {\XXint\scriptscriptstyle\scriptscriptstyle{#1}}%
 \!\int}
\def\XXint#1#2#3{{\setbox0=\hbox{$#1{#2#3}{\int}$}
  \vcenter{\hbox{$#2#3$}}\kern-.5\wd0}}
\def\dashint{\Xint-}
\def\.5{\frac{1}{2}}
\newcommand{\RN}[1]{%
  \textup{\uppercase\expandafter{\romannumeral#1}}%
}
\renewcommand{\epsilon}{\varepsilon}
\newcounter{marnote}
\begin{document}

\title[Nonuniqueness on the Navier-Stokes equation]{Nonuniqueness analysis on the Navier-Stokes equation in $C_{t}L^{q}$ space}

\author[C.X. Miao]{Changxing Miao}
\address[C.X. Miao] {Institute of Applied Physics and Computational Mathematics, P.O. Box 8009, Beijing, 100088, China.}
\email{miao\_changxing@iapcm.ac.cn}

\author[Z.W. Zhao]{Zhiwen Zhao}

\address[Z.W. Zhao]{School of Mathematics and Physics, University of Science and Technology Beijing, Beijing 100083, China.}
\email{zwzhao365@163.com}


\date{\today} 


\maketitle
\begin{abstract}
In the presence of any prescribed kinetic energy, we implement the intermittent convex integration scheme with $L^{q}$-normalized intermittent jets to give a direct proof for the existence of solution to the Navier-Stokes equation in $C_{t}L^{q}$ for some uniform $2<q\ll3$ without the help of interpolation inequality. The result shows the sharp nonuniqueness that there evolve infinite nontrivial weak solutions of the Navier-Stokes equation starting from zero initial data. Furthermore, we improve the regularity of solution to be of $C_{t}W^{\alpha,q}$ in virtue of the fractional Gagliardo-Nirenberg inequalities with some $0<\alpha\ll1$. More importantly, the proof framework provides a stepping stone for future progress on the method of intermittent convex integration due to the fact that $L^{q}$-normalized building blocks carry the threshold effect of the exponent $q$ arbitrarily close to the critical value $3$.
\end{abstract}

\maketitle



\section{Introduction and main results}

The mathematical model frequently used to describe incompressible viscous  fluid flow is the following Navier-Stokes equation in the periodic case:  for $T>0$,
\begin{align}\label{ZQ001}
\begin{cases}
\partial_{t}u-\nu\Delta u+\mathrm{div}(u\otimes u)+\nabla p=0,\\
\mathrm{div}u=0,
\end{cases}\quad\mathrm{on}\;[0,T]\times\mathbb{T}^{3},	
\end{align}
where $\mathbb{T}^{3}:=\mathbb{R}^{3}/2\pi\mathbb{Z}^{3}$, $u:[0,T]\times\mathbb{T}^{3}\rightarrow\mathbb{R}^{3}$ and $p:[0,T]\times\mathbb{T}^{3}\rightarrow\mathbb{R}$, respectively, represent the unknown velocity and scalar pressure, $\nu>0$ denotes the inverse Reynolds number, which is also called the kinematic viscosity up to the fixed characteristic scale and characteristic velocity of the flow. Particularly in high Reynolds number regime (that is, when $\nu$ is sufficiently small), this model can be utilized to interpret turbulent flow characterized by chaotic changes in flow velocity and pressure. In fact, a lot of natural phenomena and engineering processes are closely linked to turbulence, see e.g. \cite{B1953,FMRT2001,F1995,MY196501,MY196502,T2001}. Quantitative analysis on occurrence and evolution mechanism of turbulence is critical to its application in the fields of environmental protection, industrial equipments, biotechnology and medical technology etc. 

It has been verified to be a tremendous accuracy in numerical experiments \cite{KIYIU2003,PKV2002,S1998} in terms of the fundamental ansatz of fully developed Kolmogorov's 1941 turbulence theory \cite{K194101,K194102,K194103} that there appears anomalous dissipation of energy in the infinite Reynolds number limit. This ansatz also indicates that the solutions of Navier-Stokes equation may only converge to weak solutions of the Euler equations in the vanishing viscosity limit rather than keeping uniformly smooth convergence. Onsager \cite{O1949} analyzed the reason of energy dissipation possibly caused by the fact that weak solutions of the Euler equations are not regular enough to preserve energy conservation. To be specific, Onsager conjectured the dichotomy phenomena that energy dissipation occurs in the case when their weak solutions belong to H\"{o}lder space with regularity exponent $\alpha\leq\frac{1}{3}$, while the energy is conserved for $\alpha>\frac{1}{3}$. Onsager's conjecture has now been demonstrated due to \cite{CWT1994,BDSV2019,I2018} except for the critical case $\alpha=\frac{1}{3}$. With regard to more previous related results, see \cite{BDIS2015,DS2013,DS2009,DS2014,S1993,S1997,S2000} and the references therein. These facts stimulate us to work within the framework of weak solutions for translating predictions provided by turbulence theory into strictly mathematical conclusions. We now give the definition for weak solution of \eqref{ZQ001} as follows. 
\begin{definition}
For $T>0$, a vector-valued function	$u\in L^{2}([0,T]\times\mathbb{T}^{3})$ is termed as a weak solution of \eqref{ZQ001} provided that for any $t\in[0,T]$, the velocity field $u(\cdot,t)$ has zero mean, is weakly divergence free, and 
\begin{align*}
\int_{0}^{T}\int_{\mathbb{T}^{3}}u\cdot(\partial_{t}\varphi+\nu\Delta\varphi+(u\cdot\nabla)\varphi)dxdt=0,	
\end{align*}
for any divergence free test function $\varphi\in C_{0}^{\infty}((0,T);C^{\infty}(\mathbb{T}^{3}))$.
\end{definition}	
This type of weak solutions are also called very weak solutions based on the fact that $L^{2}([0,T]\times\mathbb{T}^{3})$ is the minimal regularity space to ensure the validity for the definition of solution in the distributional sense. From the work of Fabes-Jones-Rivi\`{e}re \cite{FJR1972}, we see that up to possible redefinition of the solution on a negligible set of zero measure in space-time, the above solutions are actually equivalent to mild or Oseen solutions (see Chapter 6 in \cite{L2016}) subject to the following integral equation 
\begin{align*}
u(\cdot,t)=e^{\nu t\Delta}u(\cdot,0)+\int_{0}^{t}e^{\nu(t-s)\Delta}\mathbb{P}_{H}\mathrm{div}(u(\cdot,s)\otimes u(\cdot,s))ds,	
\end{align*}  
where $\mathbb{P}_{H}$ denotes the Leray projection  onto the divergence-free vector fields and $e^{t\Delta}$ is the heat semigroup. If we further require that $u\in C_{\mathrm{weak}}([0,T];L^{2}(\mathbb{T}^{3}))\cap L^{2}([0,T];\dot{H}^{1}(\mathbb{T}^{3}))$, it becomes the classical Leray-Hopf weak solution introduced by Leray \cite{L1934} in $\mathbb{R}^{3}$ and later Hopf \cite{H1951} in domains with boundary. These solutions satisfy the energy inequality
\begin{align*}
\|u(\cdot,t)\|_{L^{2}}^{2}+2\nu\int_{0}^{t}\|\nabla u(\cdot,s)\|_{L^{2}}^{2}ds\leq\|u(\cdot,0)\|_{L^{2}}^{2},\quad\text{for any }t\in[0,T],
\end{align*}	 
which  explicitly exhibits the information of two important physical quantities including the kinetic energy and the dissipation of energy such that  the study on Leray-Hopf weak solution  turns into the central topic in fluid mechanics. While global well-posedness with small initial data and local well-posedness for their solutions have been established based on classical contraction mapping arguments (see e.g. \cite{CMP1994,L2016,KT2001,K1984,FK1964}), the global well-posedness with general initial data still remains to be open and has been listed as one of seven \textit{Clay Millennium Prize} problems \cite{F2000}.  Regularity and uniqueness of Leray-Hopf solutions are shown to hold with the additional assumption that the solutions are bounded in $L_{t}^{p}L^{q}$ for $2/p+3/q\leq1$, which is termed as the Lady\v{z}enskaja-Prodi-Serrin criteria \cite{ISS2003,KL1957,P1959,S1962}. When this criteria is violated, it was famously conjectured by Lady\v{z}enskaja \cite{L1967} that there will appear non-uniqueness of Leray-Hopf solutions. Recently, Jia and \v{S}ver\'{a}k \cite{JS2015} demonstrated the non-uniqueness under a certain spectral assumption for the linearized Navier-Stokes operator. The subsequent work \cite{GS2023} completed by Guillod and \v{S}ver\'{a}k presented compelling numerical evidence that such assumption may be true, althought a strict mathematical proof is not yet given. 

Analogous to Leray-Hopf solutions, the Lady\v{z}enskaja-Prodi-Serrin criteria also holds for mild/Oseen solutions \cite{FJR1972,FLT2000,LM2001,L2002,K2006}. Besides the requirement of higher Sobolev norm, Leray-Hopf solutions possess some nice properties such as weak-strong uniqueness (see e.g. \cite{C2011,G2006,W2018}) by contrast with mild/Oseen solutions. These facts indicate that the nonuniqueness of Leray-Hopf solutions are more difficult to handle than mild/Oseen solutions. Buckmaster and Vicol \cite{BV2019} were the first to prove the nonuniqueness of mild/Oseen solutions in $C_{t}L^{2^{+}}$ with bounded kinetic energy by developing the method of intermittent convex integration, whose ideas were inspired by previous numerical observations on the intermittency in hydrodynamic turbulence, see e.g. \cite{F1995,MY196501}. It is worthwhile to point out that the core of this scheme lies in achieving the coupling of concentration and oscillation. In fact, in contrast to highly oscillated $C^{\alpha}_{x,t}$ convex integration scheme developed by De Lellis and Sz\'{e}kelyhidi \cite{DS2013}, which was motivated by the earlier techniques of Nash twists \cite{N1954} and Kuiper corrugations \cite{K1955} for the isometric embedding problem, the concentration parameter was further added in intermittent Beltrami flows \cite{BV2019} for the purpose of weakening the effect of linear viscosity term $-\nu\Delta u$ in Lebesgue spaces. This makes the nonlinear convective term $u\cdot\nabla u$ hold the dominant position and therefore leads to the nonuniqueness of solutions. The scheme was subsequently refined by Buckmaster, Colombo and Vicol \cite{BCV2022} to construct non-unique weak solutions with the Hausdorff dimension of singular sets in time strictly less than $1$. Cheskidov and Luo \cite{CL2022} discovered the effect of temporal intermittency and established the nonuniqueness of weak solutions in $L^{p}_{t}L^{\infty}$ with any $1<p<2$. In addition, intermittent convex integration schemes have been used to solve the nonuniqueness of solutions for other PDEs in fluid dynamics (see e.g. \cite{CL2021,MS2018,MS2019,BBV2020,BN2023,BMNV2023}) and the stationary Navier-Stokes equation \cite{L2019}. With regard to more detailed introduction for the development of convex integration scheme, see \cite{BV2021,BV201902}. 

Despite several attempts in aforementioned work, current convex integration schemes seem far from fulfilling the requirement of Leray-Hopf solutions. Recently, Albritton, Bru\'{e} and Colombo \cite{ABC2022} proved the non-uniqueness of Leray-Hopf solutions to the forced Navier-Stokes equation in the whole space by using a different method based on self-similarity and instability. It is worth emphasizing that the results in \cite{ABC2022} only hold for some special nontrivial body forces. It remains to be open for the general body forces, especially for the case of trivial body forces and nonzero initial data. As for the corresponding non-uniqueness in bounded domains, see their subsequent work  \cite{ABC202202}.

Different from intermittent building blocks of $L^{2}$ normalization developed in \cite{BV2019,BCV2022}, in this paper we introduce the general $L^{q}$-normalized intermittent jets to reinterpret the method of intermittent convex integration and present a direct proof for the nonuniqueness of weak solutions to the Navier-Stokes equation in $C_{t}L^{q}$ for some $q$ slightly greater than $2$ without the assistance of interpolation inequality. Of greater importance is that the proof framework exhibits the critical effect of $L^{q}$-normalized building blocks and thus it can serve as an entry point for future growth. Our proof shows that the iterative pattern in current convex integration scheme need to be further improved to effectively leverage the threshold effect, see Section \ref{SEC002} for an in-depth explanation. The main result in this paper is now listed as follows.
\begin{theorem}\label{Main01}
There exists some $2<q\ll3$ such that for any given nonnegative smooth function $e(t):[0,T]\rightarrow[0,\infty)$, there is a weak solution $u\in C_{t}([0,T];L^{q}(\mathbb{T}^{3}))$ of the Navier-Stokes equation satisfying that $\int_{\mathbb{T}^{3}}|u(x,t)|^{2}dx=e(t)$ for all $t\in[0,T].$ 

\end{theorem}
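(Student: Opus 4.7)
The plan is to prove Theorem \ref{Main01} by running an intermittent convex integration iteration in which the elementary building blocks are normalized in $L^{q}$ rather than in $L^{2}$, so that convergence in $C_{t}L^{q}$ comes out directly without the interpolation step used in earlier work. At stage $q\in\mathbb{N}$ I would carry along a smooth triple $(u_{q},p_{q},\mathring{R}_{q})$ solving the Navier-Stokes-Reynolds system
\[
\partial_{t}u_{q}-\nu\Delta u_{q}+\mathrm{div}(u_{q}\otimes u_{q})+\nabla p_{q}=\mathrm{div}\,\mathring{R}_{q},\qquad \mathrm{div}\,u_{q}=0,
\]
with Reynolds stress small in a suitable $L^{1}$-type norm at level $\delta_{q+1}$ and with the kinetic energy gap $|e(t)(1-\delta_{q+1})-\int_{\mathbb{T}^{3}}|u_{q}(x,t)|^{2}dx|$ bounded by a fixed fraction of $\delta_{q+1}e(t)$. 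Frequencies $\lambda_{q}=a^{(b^{q})}$ will grow super-exponentially while $\delta_{q}\sim\lambda_{q}^{-2\beta}$ decays, with $a,b$ large and $\beta>0$ small parameters to be tuned.

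The key new ingredient is a family of $L^{q}$-normalized intermittent jets $W_{(\xi)}$ attached to a finite set of rational directions $\xi\in\Lambda\subset\mathbb{S}^{2}$ and parametrized by an oscillation scale $\lambda\sim\lambda_{q+1}$ and a concentration scale $\mu$ with $\mu\ll\lambda$. Because $\|W_{(\xi)}\|_{L^{r}}\simeq\mu^{3/q-3/r}$, the $L^{q}$ normalization yields an $L^{2}$ mass of size $\mu^{3/q-3/2}$ and produces $L^{p}$ norms that deteriorate as $p$ grows, a threshold behavior that only remains admissible for $q$ strictly less than $3$. Using these blocks I would write the perturbation as $w_{q+1}=w^{(p)}+w^{(c)}+w^{(t)}$, where $w^{(p)}=\sum_{\xi\in\Lambda}a_{\xi}(x,t)W_{(\xi)}$ with amplitudes $a_{\xi}$ determined by the De Lellis--Sz\'ekelyhidi Geometric Lemma so that the low-frequency part of $\sum_{\xi}a_{\xi}^{2}(W_{(\xi)}\otimes W_{(\xi)})$ cancels $\mathring{R}_{q}$ up to a gradient, $w^{(c)}$ restores the divergence-free condition, and $w^{(t)}$ absorbs the bad time derivative of the jets. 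A standard temporal cutoff would simultaneously tune the kinetic energy to match $e(t)$ up to the desired tolerance.

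Inserting $u_{q+1}=u_{q}+w_{q+1}$ back into the system and applying the antidivergence operator $\mathcal{R}$ produces the next Reynolds stress as a sum of linear, Nash, oscillation, and corrector errors, each of which I would estimate in $L^{1+\eta}$ by combining stationary phase with the scaling $\|W_{(\xi)}\|_{L^{r}}\simeq\mu^{3/q-3/r}$. Once every error stress is dominated by $\delta_{q+2}$ and $\|w_{q+1}\|_{C_{t}L^{q}}\lesssim\delta_{q+1}^{1/2}$, the sequence $u_{q}$ converges in $C_{t}L^{q}$ to the claimed weak solution with $\int_{\mathbb{T}^{3}}|u(x,t)|^{2}dx=e(t)$. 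The main obstacle is exactly this parameter chase: the $L^{q}$ normalization reduces the concentration gain available for dominating the linear viscous error $\nu\Delta w_{q+1}$, the Nash interaction $w^{(p)}\cdot\nabla u_{q}$, and the oscillation error $\mathrm{div}(w^{(p)}\otimes w^{(p)}-\mathring{R}_{q})$ all at once, and showing that there is a nonempty range of admissible exponents $2<q\ll 3$ together with a single compatible choice of $(a,b,\beta,\mu,\lambda)$ is the heart of the argument.
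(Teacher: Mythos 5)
Your strategy is the same as the paper's: an intermittent convex integration with $L^{q}$-normalized jets producing iterates $(u_{m},\mathring{R}_{m})$ of the Navier--Stokes--Reynolds system that are Cauchy in $C_{t}L^{q}$. However, two steps that you treat as routine are precisely where the argument is delicate, and as written they do not close.

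First, the amplitude construction. You propose to choose the $a_{\xi}$ from the De Lellis--Sz\'ekelyhidi Geometric Lemma so that the low-frequency part of $\sum_{\xi}a_{\xi}^{2}\,W_{(\xi)}\otimes W_{(\xi)}$ cancels $\mathring{R}_{q}$. That lemma requires $\mathrm{Id}-\mathring{R}_{q}/\rho$ to lie \emph{pointwise} in $B_{1/2}(\mathrm{Id})$, i.e. $\rho\gtrsim\|\mathring{R}_{q}\|_{L^{\infty}}$, whereas your induction only controls $\mathring{R}_{q}$ in an $L^{1}$-type norm; after mollification the $L^{\infty}$ norm is merely bounded by $\ell^{-4}$ times the $L^{1}$ norm, and taking $\rho$ that large destroys the bound $\|w^{(p)}\|_{C_{t}L^{q}}\lesssim\delta_{q+1}^{1/2}$. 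One needs the additional dyadic partition of unity $\chi_{(i)}$ in the \emph{range} of the mollified stress, with level-dependent amplitudes $\rho_{i}\sim4^{i}$ and the Chebyshev bound $|\operatorname{supp}\chi_{(i)}|\lesssim4^{-i}$, so that $\|a_{(\zeta)}\|_{L^{q}}\sim2^{(q-2)i/q}\delta_{q+1}^{1/2}\lambda_{q}^{-\varepsilon}$ still sums over $i\leq i_{\max}\sim\ln\lambda_{q+1}$; this summability is itself one of the constraints forcing $q$ close to $2$ and cannot be skipped. Second, the energy matching. Your inductive bound $|e(t)(1-\delta_{q+1})-\int|u_{q}|^{2}|\lesssim\delta_{q+1}e(t)$ is the standard one when $e$ is bounded below, but here $e$ is only nonnegative and the nonuniqueness application requires $e(0)=0$. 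At a zero of $e$ your condition forces $u_{q}(\cdot,t)\equiv0$, and ``a standard temporal cutoff'' does not produce a smooth amplitude that vanishes identically, together with its time derivative, at such times while still pumping the correct energy nearby. The paper needs an extra inductive dichotomy (if the energy gap drops below a fixed fraction of its allowed size then $u_{m}$ and $\mathring{R}_{m}$ vanish identically at that time) together with the regularized density $\rho_{0}=((\rho^{1/2})\ast_{t}\xi_{\ell})^{2}$ to keep $\rho_{0}^{1/2}$ smooth and to verify $\rho_{0}\gtrsim\delta_{m+1}$ on $\operatorname{supp}\chi_{(0)}$. Without some such mechanism the induction does not close for the class of energy profiles the theorem actually claims.
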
	

\begin{remark}
Combining Proposition \ref{prop06} and the fractional Gagliardo-Nirenberg inequalities established in \cite{BM2018}, we further obtain that the constructed solution $u$ belongs to $C_{t}W^{\alpha,q}$ for some $0<\alpha\ll1,$ which actually improves the result of $u\in C_{t}H^{\alpha}$ in \cite{BV2019}.
\end{remark}
\begin{remark}
From Theorem \ref{Main01}, we obtain the sharp non-uniqueness of weak solutions in $C_{t}L^{q}$ with zero initial data. For example, taking $e_{k}(t)=1-\cos kt$ for any integer $k\geq1$, we obtain a sequence of different nontrivial weak solutions $\{u_{k}\}_{k\geq1}\subset C_{t}L^{q}$ with $\|u_{k}(\cdot,t)\|_{L^{2}}^{2}=e_{k}(t)$ and $u_{k}(x,0)=0$ for $x\in\mathbb{T}^{3}$. Additionally, we see from the proof of Theorem \ref{Main01} that the value of $q$ is close to $2$ and remains uniform with respect to any kinetic energy $e(t)$.
\end{remark}

Before proving Theorem  \ref{Main01}, we first give a sketch for the concentration effect in intermittent convex integration scheme by analyzing the energy flux through each Littlewood-Paley shell, which can be considered as an analogy to the constructed perturbation with high concentration and oscillation in the following. Denote by $v_{m}:=\Delta_{m}u$ the Littlewood-Paley projection at frequency concentrated near $2^{m}$. For convenience, write $\lambda:=2^{m}$. By Bernstein's inequality and H\"{o}lder's inequality, we obtain that for any $1\leq p<q\leq\infty$,
\begin{align*}
\|v_{m}\|_{L^{p}}\sim\lambda^{3(\frac{1}{q}-\frac{1}{p})}\|v_{m}\|_{L^{q}}.
\end{align*}	
Multiplying \eqref{ZQ001} with $v_{m}$ and using integration by parts, we have
\begin{align*}
\frac{d}{dt}\|v_{m}\|_{L^{2}}^{2}+\nu\int_{\mathbb{T}^{3}}|\nabla v_{m}|^{2}dx=\int_{\mathbb{T}^{3}}v_{m}\cdot\Delta_{m}\mathrm{div}(u\otimes u)dx,
\end{align*}	
where
\begin{align*}
\text{Linear term }=\nu\int_{\mathbb{T}^{3}}|\nabla v_{m}|^{2}dx\sim\lambda^{2+6(\frac{1}{q}-\frac{1}{2})}\|v_{m}\|_{L^{q}}^{2},
\end{align*}	
and
\begin{align*}
\text{Nonlinear term }=&\int_{\mathbb{T}^{3}}v_{m}\cdot\Delta_{m}\mathrm{div}(u\otimes u)dx\notag\\
\lesssim&\|\nabla v_{m}\|_{L^{\infty}}\|v_{m}\|_{L^{2}}^{2}\lesssim\lambda^{1+\frac{3}{q}+6(\frac{1}{q}-\frac{1}{2})}\|v_{m}\|_{L^{q}}^{3}.
\end{align*}	
Here and below, the notations $a\lesssim b$ and $a \sim b$, respectively, represent that $a\leq Cb$ and $C^{-1}b\leq a\leq Cb$ for some universal positive constant $C$.   Therefore, if $\|v_{m}\|_{L^{q}}\rightarrow0$ as $m\rightarrow\infty$, we obtain that the linear term can be controlled by the nonlinear term when $q<3$, which actually corresponds to the antithesis of the Lady\v{z}enskaja-Prodi-Serrin criteria. Based on these facts, in the following we need to construct highly concentrated and oscillated building blocks in $L^{q}$ space as a central part of the perturbation in order to obtain the nonuniqueness. 

The rest of this paper is organized as follows. In Section \ref{SEC002}, we reduce the proof of Theorem  \ref{Main01} to the construction of an approximating sequence of smooth solutions to the Navier-Stokes-Reynolds system subject to some inductive estimates with superexponential decay, which is stated in Proposition \ref{pro01}. Section \ref{SEC003} is devoted to the proof of Proposition \ref{pro01} by manipulating the method of intermittent convex integration with the intermittent jets of $L^{q}$ normalization marked by both high oscillation and high concentration.

\section{Main iterative proposition}\label{SEC002}

For any integer $m\geq0$, starting from the trivial solution $(u_{0},\mathring{R}_{0})=(0,0)$, we aim to construct a converging sequence of smooth solutions $\{(u_{m},\mathring{R}_{m})\}$ of the following Navier-Stokes-Reynolds system
\begin{align}\label{ZQ002}
	\begin{cases}
		\partial_{t}u_{m}-\nu\Delta u_{m}+\mathrm{div}(u_{m}\otimes u_{m})+\nabla p_{m}=\mathrm{div}\mathring{R}_{m},\\
		\mathrm{div}u_{m}=0,
	\end{cases}\quad\mathrm{on}\;[0,T]\times\mathbb{T}^{3},	
\end{align}
where the pressure $p_{m}=\Delta^{-1}\mathrm{div}\mathrm{div}(\mathring{R}_{m}-u_{m}\otimes u_{m})$ with $\int_{\mathbb{T}^{3}}p_{m}=0$, the physical quantity  $\mathring{R}_{m}$, called the Reynolds stress, is a trace-free symmetric matrix. Remark that the Navier-Stokes-Reynolds system is frequently utilized in the context of computational fluid mechanics to numerically calculate the fluid. This is caused by the fact that if we carry out direct numerical simulation for the Navier-Stokes equation,  it needs to make use of sufficiently small regular mesh to precisely capture the sharp gradient changes for the fluids with high Reynolds number, which involves a large amount of computations such that this way is not practical. So in order to reduce computation complexity, it is effective to numerically analyze the macroscopically averaged velocity field $\bar{u}$ in larger regular mesh,  where $\bar{u}$ denotes the time or space average satisfying 
\begin{align}\label{F0101}
	\begin{cases}
		\partial_{t}\bar{u}-\nu\Delta \bar{u}+\mathrm{div}(\bar{u}\otimes \bar{u}+R)+\nabla \bar{p}=0,\\
		\mathrm{div}\bar{u}=0,\quad R=\overline{u\otimes u}-\bar{u}\otimes \bar{u}.
	\end{cases}
\end{align}
Here the Reynolds stress $R$ arises from the fact that the averaging operator cannot commute with the nonlinearity of $u\otimes u$. Remark that the Reynolds stress $R$ in \eqref{F0101} generally doesn't satisfies the trace-free condition. The purpose of adding this condition for $\mathring{R}_{m}$ in \eqref{ZQ002} is to meet the need of the energy iteration, see \eqref{E906} below for more details.  

Introduce the following parameters: for $m\geq0$,  $a,b>1$,  $\beta\in(0,1)$, $q\in(2,3)$, $A>q+2$,
\begin{align}\label{AAA01}
\varepsilon_{\ast}=\frac{3-q}{A}<\frac{1}{4},\quad\lambda_{m}=a^{b^{m}}, \quad \delta_{m}=\lambda_{m}^{-2\beta}\lambda_{1}^{2\beta+(q-2)(1+\varepsilon_{\ast})}\sup\limits_{t\in[0,T]}e(t).
\end{align}
The constructed sequence of solutions will satisfy the following inductive estimates: for $0<\varepsilon<\varepsilon_{\ast}/2,$
\begin{align}\label{QZ003}
\|\mathring{R}_{m}\|_{C_{t}L^{1}}\leq \vartheta_{m+1}^{\frac{q-2}{q}}\delta_{m+1}\lambda_{m}^{-2\varepsilon},\quad\vartheta_{m+1}=\lambda_{m+1}^{-q(1+\varepsilon_{\ast})},
\end{align}
and
\begin{align}\label{Z09}
\|u_{m}\|_{C_{t}L^{q}}\leq2\delta_{0}^{1/2}-\delta_{m}^{1/2},\quad\|\nabla u_{m}\|_{C_{t}L^{q}}\leq\lambda_{m}^{1+\varepsilon}\delta_{m}^{1/2},\quad\|u_{m}\|_{C^{1}_{x,t}}\leq\lambda_{m}^{4},	
\end{align}	
and, for all $t\in[0,T],$
\begin{align}\label{QZ005}
\begin{cases}
0\leq e(t)-\int_{\mathbb{T}^{3}}|u_{m}|^{2}dx\leq\vartheta_{m+1}^{\frac{q-2}{q}}\delta_{m+1},	\\
e(t)-\int_{\mathbb{T}^{3}}|u_{m}|^{2}dx\leq\vartheta_{m+1}^{\frac{q-2}{q}}\delta_{m+1}/10\;\Longrightarrow\;u_{m}(\cdot,t)\equiv0,\;\mathring{R}_{m}(\cdot,t)\equiv0.
\end{cases}
\end{align}	
In addition, we suppose that 
\begin{align}\label{AZ90}
a^{\frac{3-q-(q+2)\varepsilon_{\ast}}{3}}\in\mathbb{N},\quad b\in\mathbb{N}.
\end{align}
Remark that the condition in \eqref{AZ90} is assumed  to ensure that the $L^{q}$-normalized intermittent jets defined in Subsection \ref{IJ02} are smooth. It should be noted that the prerequisite for the condition of \eqref{AZ90} to hold is that $3-q-(q+2)\varepsilon_{\ast}>0$. This implies the critical phenomenon of exponent $q$ that the value of $q$ can arbitrarily approach the critical value $3$ only if $\varepsilon_{\ast}$ is accordingly chosen to make $0<\varepsilon_{\ast}<\frac{3-q}{q+2}$. So $\varepsilon_{\ast}$ can be regarded as an accuracy parameter for controlling the difference between the exponent $q$ and the threshold $3$. These facts show that the introduced building blocks of $L^{q}$ normalizaton in Subsection \ref{IJ02} contain the critical effect of exponent $q$, although the present scheme constrains the value of $q$ around $2$. Therefore, based on this framework, it needs to find some ways to further improve the iteration process of intermittent convex integration scheme such that the threshold effect can be fully exploited. In addition, in order to deepen the readers' understanding on the method of intermittent convex integration, we give a quantitative characterization for the ranges of the above introduced parameters in the process of the proof, which also offers a clear explanation of the relationship between these parameters. 

To prove Theorem \ref{Main01}, it suffices to establish the iterative proposition as follows.
\begin{prop}\label{pro01}
There exist some constants $2<q\ll3$, $0<2\varepsilon<\varepsilon_{\ast}<\frac{1}{4},$ $a,b\gg1$ and $0<\beta\ll1$ such that if $(u_{m},\mathring{R}_{m})$ satisfies the Navier-Stokes-Reynolds system in \eqref{ZQ002} under the condition of \eqref{QZ003}--\eqref{AZ90}, then there exists a second pair $(u_{m+1},\mathring{R}_{m+1})$ solving \eqref{ZQ002} and verifying \eqref{QZ003}--\eqref{AZ90} with $m+1$ substituting for $m$. Moreover, the velocity increment $u_{m+1}-u_{m}$ is nontrivial and satisfies 
\begin{align*}
\|u_{m+1}-u_{m}\|_{C_{t}L^{q}}\leq \kappa_{\ast}\delta^{1/2}_{m+1},	
\end{align*}
where $\kappa_{\ast}$ is a universal constant given by \eqref{C0015} below.	  
\end{prop}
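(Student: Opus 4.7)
The plan is to execute the inductive step of the intermittent convex integration scheme adapted to $L^{q}$-normalized building blocks. Starting from $(u_{m},\mathring{R}_{m})$, I would first perform a standard mollification/gluing preparation: regularize $u_{m}$ and $\mathring{R}_{m}$ in space and time at a scale $\ell\sim\lambda_{m+1}^{-\alpha}\lambda_{m}^{-N}$ for some small $\alpha>0$ and large $N$, producing $(u_{\ell},\mathring{R}_{\ell})$ that inherits all the inductive bounds (with negligible loss in the $L^{1}$ norm of $\mathring{R}_{\ell}$) and gains polynomial control on higher derivatives. This avoids loss of derivatives when later estimating transport and oscillation errors.

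Next I would introduce the energy cutoff. Define an amplitude function $\rho(t)$ modulated by a smooth cutoff $\chi(t)$ so that $\rho$ equals a constant multiple of the energy defect $e(t)-\int|u_{\ell}|^{2}dx-3(2\pi)^{3}\rho$ when that defect is large, and vanishes when the defect drops below $(\sigma^{2}r)^{\frac{q-2}{q}}\delta_{m+2}/10$. This directly encodes the dichotomy in \eqref{QZ005}. The principal perturbation is then
\begin{equation*}
w^{(p)}_{m+1}=\sum_{\xi}a_{\xi}(t,x)\,W_{\xi},
\end{equation*}
where $W_{\xi}$ are the $L^{q}$-normalized intermittent jets defined in Subsection \ref{IJ02} (parameters $\sigma,r,\mu$ from \eqref{ZQ009} giving concentration, oscillation and temporal speed), and $a_{\xi}=\rho^{1/2}\chi\,\gamma_{\xi}(\mathrm{Id}-\mathring{R}_{\ell}/\rho)$ come from the geometric lemma. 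I then add an incompressibility corrector $w^{(c)}_{m+1}$ (so $w^{(p)}+w^{(c)}$ is divergence free) and a temporal corrector $w^{(t)}_{m+1}$ designed to cancel the high-high temporal resonances produced by $\partial_{t}$ on the $\mathbb{P}_{\neq0}(W_{\xi}\otimes W_{\xi})$ piece. Set $u_{m+1}=u_{\ell}+w_{m+1}$ with $w_{m+1}=w^{(p)}+w^{(c)}+w^{(t)}$.

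The bounds on $u_{m+1}$ in \eqref{Z09} follow from the $L^{q}$-normalization of $W_{\xi}$ together with the $L^{q}$ decorrelation estimates for intermittent jets; the constant $\kappa_{\ast}$ in the velocity increment bound is simply $\sum_{\xi}\|a_{\xi}\|\cdot\|W_{\xi}\|_{L^{q}}$ multiplied by the geometric-lemma constant. Verifying $\|\nabla u_{m+1}\|_{C_{t}L^{q}}\le\lambda_{m+1}^{1+\varepsilon}\delta_{m+1}^{1/2}$ uses that derivatives land preferentially on $W_{\xi}$ producing a factor $\lambda_{m+1}\sigma^{-1}$ or $\lambda_{m+1}r^{-1}$, balanced by the $L^{q}$-normalized concentration factor; the choices of $\sigma,r,\mu$ in \eqref{ZQ009} are precisely engineered so that this threshold is not exceeded. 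The $C^{1}_{x,t}$ bound is crude and comes from the highest factor $\mu$.

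The real work is in the new Reynolds stress. Apply the inverse divergence operator $\mathcal{R}$ to the equation
\begin{equation*}
\mathrm{div}\mathring{R}_{m+1}-\nabla p_{m+1}=\underbrace{\partial_{t}w+\nu\Delta w+\mathrm{div}(u_{\ell}\otimes w+w\otimes u_{\ell})}_{\text{linear+Nash}}+\underbrace{\mathrm{div}(w^{(p)}\otimes w^{(p)}+\mathring{R}_{\ell})+\partial_{t}w^{(t)}}_{\text{oscillation}}+\text{corrector errors}.
\end{equation*}
The oscillation error is the decisive one: the temporal corrector is constructed so that its $\partial_{t}$ cancels the mean-zero part of $\mathrm{div}(w^{(p)}\otimes w^{(p)})$ up to transport; the remaining high-frequency pieces are handled by the stationary phase-type inverse-divergence bound, gaining $\lambda_{m+1}^{-1}$. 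The hardest part, and where the $L^{q}$ normalization is critical, is checking that the linear (Laplacian) error $\nu\mathcal{R}\Delta w^{(p)}$ and the transport error $\mathcal{R}\partial_{t}w^{(p)}$ (the latter producing a factor $\mu$) can be absorbed into $(\sigma^{2}r)^{(q-2)/q}\delta_{m+2}\lambda_{m+1}^{-2\varepsilon}$. The Bernstein/concentration trade-off sketched in the introduction is exactly what is exploited here: the concentration gain $\sigma^{-(1-2/q)}r^{-(1/2-1/q)}$ from passing between $L^{2}$ and $L^{q}$ beats the viscous loss $\lambda_{m+1}^{2}$ precisely when $q<3$ minus the accuracy correction $(q+2)\varepsilon_{\ast}$, which forces the constraint $\varepsilon_{\ast}<(3-q)/(q+2)$ built into \eqref{AZ90}. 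I expect this balance -- verifying that every error summand is $\le(\sigma^{2}r)^{(q-2)/q}\delta_{m+2}\lambda_{m+1}^{-2\varepsilon}$ with room to spare for choosing $\beta$ small and $b$ large so that superexponential decay is preserved -- to be the main technical obstacle. Finally, the energy iteration \eqref{QZ005} follows from the definition of $\rho$ and the standard computation of $\int|u_{m+1}|^{2}-\int|u_{\ell}|^{2}$, which equals $3(2\pi)^{3}\rho$ up to high-frequency terms controlled by the same stationary-phase argument.
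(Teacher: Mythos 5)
Your overall architecture (mollification, amplitude times $L^{q}$-normalized jets, incompressibility and temporal correctors, inverse divergence, the Bernstein/concentration trade-off forcing $\varepsilon_{\ast}<(3-q)/(q+2)$) matches the paper. But there is a genuine gap in the amplitude construction. You take a single amplitude $\rho(t)$, proportional to the energy defect, and set $a_{\xi}=\rho^{1/2}\chi\,\gamma_{\xi}(\mathrm{Id}-\mathring{R}_{\ell}/\rho)$. The inductive hypothesis \eqref{QZ003} controls $\mathring{R}_{m}$ (hence $\mathring{R}_{\ell}$) only in $C_{t}L^{1}$, while $\rho(t)$ is of size $(\sigma^{2}r)^{\frac{q-2}{q}}\delta_{m+1}$; consequently $\mathring{R}_{\ell}(x,t)/\rho(t)$ is \emph{not} pointwise small, $\mathrm{Id}-\mathring{R}_{\ell}/\rho$ need not lie in $B_{\frac12}(\mathrm{Id})$, and the geometric Lemma \ref{lem05} cannot be applied. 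The alternative fix of making $\rho$ pointwise in $x$ (e.g.\ $\rho\sim\sqrt{\ell^{2}+|\mathring{R}_{\ell}|^{2}}$ as in the $L^{2}$ theory of Buckmaster--Vicol) also fails here: one then needs $\|a_{\xi}\|_{L^{q}}\sim\|\rho\|_{L^{q/2}}^{1/2}\sim\|\mathring{R}_{\ell}\|_{L^{q/2}}^{1/2}$ with $q/2>1$, which is not controlled by the $L^{1}$ inductive bound, so the key estimate $\|w^{(p)}_{m+1}\|_{L^{q}}\lesssim\delta_{m+1}^{1/2}$ is out of reach.

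The paper resolves exactly this by a dyadic decomposition of the stress: cutoffs $\chi_{(i)}=\tilde\chi_{i}(\mathring{R}_{\ell}/((\sigma^{2}r)^{\frac{q-2}{q}}\delta_{m+1}\lambda_{m}^{-2\varepsilon}))$ with levels $\rho_{i}=4^{i+1}(\sigma^{2}r)^{\frac{q-2}{q}}\delta_{m+1}\lambda_{m}^{-2\varepsilon}$ for $i\geq1$ and a separate, energy-driven $\rho_{0}$, so that $\mathring{R}_{\ell}/\rho_{i}\in B_{\frac12}(\mathrm{Id})$ on $\operatorname{supp}\chi_{(i)}$, together with the alternation $\Lambda_{(i)}=\Lambda_{i\,\mathrm{mod}\,2}$ to kill cross terms in $w^{(p)}\otimes w^{(p)}$. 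Chebyshev then gives $|\operatorname{supp}\chi_{(i)}|\lesssim 4^{-i}$ and $\|a_{(\zeta)}\|_{L^{q}}\lesssim 2^{\frac{(q-2)i}{q}}\delta_{m+1}^{1/2}\lambda_{m}^{-\varepsilon}$, and the sum over $i\leq i_{\max}\sim\ln\lambda_{m+1}$ of $2^{\frac{(q-2)i}{q}}$ is a nontrivial obstruction: its control is precisely \eqref{M013} and produces the additional restriction \eqref{I02} on $q$. This summation issue, which is specific to working in $L^{q}$ with $q>2$ against an $L^{1}$ stress bound, is absent from your sketch and is one of the places where the admissible range of $q$ is actually determined; your proposal as written would not close the induction without it.
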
	
\begin{remark}
From the proof of Proposition \ref{pro01}, we know that the values of $q,\varepsilon_{\ast},\varepsilon,b,\beta,\kappa_{\ast}$ are all independent of $\sup_{t\in[0,T]} e(t)$ except for the parameter $a$. Moreover, if $\varepsilon\rightarrow\varepsilon_{\ast}/2$ or $\varepsilon\rightarrow0$, then $q\rightarrow2$, $b\rightarrow\infty$ and $\beta\rightarrow0$.
\end{remark}
A direct consequence of Proposition \ref{pro01} shows that Theorem \ref{Main01} holds.
\begin{proof}[Proof of Theorem \ref{pro01}]
From Proposition \ref{pro01}, we see that $\{u_{m}\}$ and $\{\mathring{R}_{m}\}$ are the Cauchy sequences in $C_{t}L^{q}$ and $C_{t}L^{1}$, respectively. This, together with the definition of weak solution, gives that Theorem \ref{Main01} holds.
\end{proof}

\section{ Implementation for intermittent convex integration scheme}\label{SEC003}
\subsection{Mollification}
Before proving Proposition \ref{pro01}, we first mollify $u_{m},\mathring{R}_{m}$ to be $u_{\ell},\mathring{R}_{\ell}$ with the aim of achieving high-order derivative estimates on these two mollified quantities in virtue of the mollification parameter $\ell$. To  be specific, let $\eta_{\ell}$ and $\xi_{\ell}$ be two standard mollifying kernels defined on $\mathbb{R}^{3}$ and $\mathbb{R}$, respectively, and denote
\begin{align*}
u_{\ell}=(u_{m}\ast_{x}\eta_{\ell})\ast_{t}\xi_{\ell},\quad\mathring{R}_{\ell}=(\mathring{R}_{m}\ast_{x}\eta_{\ell})\ast_{t}\xi_{\ell}+u_{\ell}\mathring{\otimes}u_{\ell}-((u_{m}\mathring{\otimes} u_{m})\ast_{x}\eta_{\ell})\ast_{t}\xi_{\ell}\big),	
\end{align*}	
where $(h\mathring{\otimes}h)_{ij}=h_{i}h_{j}-\frac{1}{3}\delta_{ij}|h|^{2}\mathrm{Id}$ with $\delta_{ij}$ and $\mathrm{Id}$ being the Kronecker symbol and the identity matrix, respectively. In order to ensure that $u_{m},\mathring{R}_{m}$ are mollified in the whole time domain $[0,T]$, before mollifications we actually perform the reflective extensions for $u_{m},\mathring{R}_{m}$ to make them be two continuous functions on $[-T,2T]$. That is, for any $x\in\mathbb{T}^{3}$, let
\begin{align*}
u_{m}(x,t)=
\begin{cases}
	u_{m}(x,-t),& \mathrm{if} \;-T\leq t\leq0,\\
	u_{m}(x,2T-t),& \mathrm{if} \;T\leq t\leq2T,
\end{cases}
\end{align*}
and
\begin{align*}
\mathring{R}_{m}(x,t)=
\begin{cases}
\mathring{R}_{m}(x,-t),& \mathrm{if} \;-T\leq t\leq0,\\
\mathring{R}_{m}(x,2T-t),& \mathrm{if} \;T\leq t\leq2T.
\end{cases}
\end{align*}	
It then follows from \eqref{ZQ002} that $(u_{\ell},\mathring{R}_{\ell})$ solves
\begin{align*}
	\begin{cases}
		\partial_{t}u_{\ell}-\nu\Delta u_{\ell}+\mathrm{div}(u_{\ell}\otimes u_{\ell})+\nabla p_{\ell}=\mathrm{div}\mathring{R}_{\ell},\\
		\mathrm{div}u_{\ell}=0,
	\end{cases}\quad\mathrm{on}\;[0,T]\times\mathbb{T}^{3},	
\end{align*}
where the new pressure $p_{\ell}$ is given by 
\begin{align*}
p_{\ell}=(p_{m}\ast_{x}\eta_{\ell})\ast_{t}\xi_{\ell}+\frac{1}{3}\big(|u_{\ell}|^{2}-(|u_{m}|^{2}\ast_{x}\eta_{\ell})\ast_{t}\xi_{\ell}\big).
\end{align*}	
In order to deal with the commutator $u_{\ell}\mathring{\otimes}u_{\ell}-((u_{m}\mathring{\otimes} u_{m})\ast_{x}\eta_{\ell})\ast_{t}\xi_{\ell}\big)$ in $\mathring{R}_{\ell}$, we need to establish the following commutator estimate.
\begin{lemma}\label{lem01}
For any multi-index $\alpha$ with $|\alpha|=k\in\mathbb{N}$, $1\leq p<\infty$, and $f,g\in C^{\infty}(\mathbb{T}^{3}\times[0,T])$, we have
\begin{align*}
&\|\nabla_{x,t}^{\alpha}[((f\ast_{x}\eta_{\ell})\ast_{t}\xi_{\ell})((g\ast_{x}\eta_{\ell})\ast_{t}\xi_{\ell})-((fg)\ast_{x}\eta_{\ell})\ast_{t}\xi_{\ell}]\|_{C_{t}L^{p}}\notag\\
&\lesssim\ell^{2-k}\|\nabla f\|_{C_{t}L^{2p}}\|\nabla g\|_{C_{t}L^{2p}}.
\end{align*}	
\end{lemma}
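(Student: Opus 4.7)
The plan is to prove this bound via the Constantin--E--Titi commutator identity, together with the mean value theorem and the scaling properties of the mollifier. As a first step I would amalgamate the two separate mollifications into a single spacetime mollifier $\phi_{\ell}(x,t):=\eta_{\ell}(x)\xi_{\ell}(t)$, so that both convolutions take the unified form $h\ast\phi_{\ell}$ where $\ast$ now denotes spacetime convolution.

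The key algebraic identity, obtained by symmetrizing in the swap of the two dummy variables and using $\int\phi_{\ell}=1$, reads
\begin{align*}
(f\ast\phi_{\ell})(g\ast\phi_{\ell})-(fg)\ast\phi_{\ell}=-\tfrac{1}{2}\int\!\!\int\phi_{\ell}(y)\phi_{\ell}(z)\bigl(f(\cdot-y)-f(\cdot-z)\bigr)\bigl(g(\cdot-y)-g(\cdot-z)\bigr)\,dy\,dz,
\end{align*}
with $y,z$ ranging over spacetime. For the base case $k=0$, I would apply Minkowski's integral inequality in $L^{p}$, followed by H\"{o}lder with conjugate exponent $2$, and bound each difference by the mean value theorem as $\|f(\cdot-y)-f(\cdot-z)\|_{L^{2p}}\leq|y-z|\|\nabla f\|_{L^{2p}}$. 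Since $\phi_{\ell}$ is supported in a ball of radius $\lesssim\ell$, we have $|y-z|\lesssim\ell$ throughout the support of $\phi_{\ell}(y)\phi_{\ell}(z)$, and the double integration in $y,z$ then produces exactly the factor $\ell^{2}$.

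For the derivative case $k\geq1$, I would distribute $\nabla_{x,t}^{\alpha}$ via the Leibniz rule, using the convolution identity $\nabla^{\alpha}(h\ast\phi_{\ell})=h\ast\nabla^{\alpha}\phi_{\ell}$. Each derivative placed on a mollifier contributes a factor $\ell^{-1}$ via the standard scaling $\|\nabla^{m}\phi_{\ell}\|_{L^{1}}\lesssim\ell^{-m}$, while the underlying commutator structure preserves the $\ell^{2}$ gain from the identity above, so the Leibniz expansion yields the net exponent $\ell^{2-k}$.

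The main technical obstacle will be the boundary terms of the Leibniz expansion in the derivative case, namely those contributions where all $k$ derivatives land on a single factor of $(f\ast\phi_{\ell})(g\ast\phi_{\ell})$ and would naively require $L^{2p}$ bounds on $f$ or $g$ themselves rather than on $\nabla f,\nabla g$. These must be matched against the corresponding piece of $(fg)\ast\nabla^{\alpha}\phi_{\ell}$ via the vanishing moment $\int\nabla^{\alpha}\phi_{\ell}=0$ for $|\alpha|\geq1$, which rewrites them as difference expressions amenable to the same mean value and support argument as the base case. Once this cancellation is made explicit, the entire estimate reduces to a sum of terms of the $k=0$ form, and the claimed bound follows.
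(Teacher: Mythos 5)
Your argument is correct and, for $k\geq1$, ends up coinciding with the paper's: both rest on the Leibniz expansion, the vanishing moments $\int\nabla^{\beta}\phi_{\ell}=0$ for $|\beta|\geq1$ (where $\phi_{\ell}:=\eta_{\ell}\xi_{\ell}$ is your spacetime mollifier), and the mean value theorem on the $O(\ell)$ support. The only genuine divergence is the base case $k=0$: you use the symmetrized variance identity $-\frac{1}{2}\int\!\!\int\phi_{\ell}(y)\phi_{\ell}(z)(f(\cdot-y)-f(\cdot-z))(g(\cdot-y)-g(\cdot-z))\,dy\,dz$, whereas the paper subtracts the point values $f(x,t)$, $g(x,t)$ and writes the commutator in the Constantin--E--Titi form $((f-f(x,t))(g-g(x,t)))\ast\phi_{\ell}$ minus products of differences such as $f\ast\phi_{\ell}-f(x,t)$; the two identities are algebraically equivalent and both produce the factor $\ell^{2}$. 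Two points to tighten when writing this up. First, the interior Leibniz terms $(f\ast\nabla^{\beta}\phi_{\ell})(g\ast\nabla^{\alpha-\beta}\phi_{\ell})$ with $0<\beta<\alpha$ are not innocuous either: a naive Young bound gives $\ell^{-k}\|f\|_{L^{2p}}\|g\|_{L^{2p}}$, so each factor separately must be rewritten via its own vanishing moment as $((f-f(x,t))\ast\nabla^{\beta}\phi_{\ell})$ to trade $\|f\|$ for $\ell\|\nabla f\|$ --- the cancellation against $(fg)\ast\nabla^{\alpha}\phi_{\ell}$ is only needed for the two endpoint terms, exactly as you describe. Second, since you mollify in space and time simultaneously, the mean value theorem must be applied to the full spacetime gradient $\nabla_{x,t}f$, which is how the right-hand side of the lemma should be read (and the time convolution near $t=0,T$ is only well defined after the reflective extension of $f,g$ to $[-T,2T]$ that the paper performs beforehand).
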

Remark that the commutator-type estimate can be traced back to the work \cite{CWT1994}. Similar to Lemma 1 in \cite{CDS2012} and Proposition B.1 in \cite{L2019}, we give the proof of Lemma \ref{lem01} as follows.
\begin{proof}[Proof of Lemma \ref{lem01}]
For any multi-index $\alpha$ with $|\alpha|=k$, denote $\nabla_{x,t}^{\alpha}=\partial_{x}^{\alpha_{1}}\partial_{t}^{\alpha_{2}}=\partial^{\alpha_{1}}\partial^{\alpha_{2}}$ with $\alpha_{1}+\alpha_{2}=\alpha.$ From the product rule, we see
\begin{align*}
&\nabla_{x,t}^{\alpha}[((fg)\ast\eta_{\ell})\ast\xi_{\ell}-((f\ast\eta_{\ell})\ast\xi_{\ell})((g\ast\eta_{\ell})\ast\xi_{\ell})]\notag\\
&=((fg)\ast\partial^{\alpha_{1}}\eta_{\ell})\ast\partial^{\alpha_{2}}\xi_{\ell}-\sum_{\beta\leq\alpha}C_{\beta}^{\alpha}\nabla_{x,t}^{\beta}((f\ast\eta_{\ell})\ast\xi_{\ell})\nabla_{x,t}^{\alpha-\beta}((g\ast\eta_{\ell})\ast\xi_{\ell})\notag\\
&=((fg)\ast\partial^{\alpha_{1}}\eta_{\ell})\ast\partial^{\alpha_{2}}\xi_{\ell}-\nabla_{x,t}^{\alpha}((f\ast\eta_{\ell})\ast\xi_{\ell})-\nabla_{x,t}^{\alpha}((g\ast\eta_{\ell})\ast\xi_{\ell})\notag\\
&\quad-\sum_{0<\beta<\alpha}C_{\beta}^{\alpha}\nabla_{x,t}^{\beta}((f\ast\eta_{\ell})\ast\xi_{\ell})\nabla_{x,t}^{\alpha-\beta}((g\ast\eta_{\ell})\ast\xi_{\ell})\notag\\
&=(((f-f(x,t))(g-g(x,t)))\ast\partial^{\alpha_{1}}\eta_{\ell})\ast\partial^{\alpha_{2}}\xi_{\ell}\notag\\
&\quad-\sum_{0<\beta<\alpha}C_{\beta}^{\alpha}(((f-f(x,t))\ast\partial^{\beta_{1}}\eta_{\ell})\ast\partial^{\beta_{2}}\xi_{\ell})(((g-g(x,t))\ast\partial^{\alpha_{1}-\beta_{1}}\eta_{\ell})\ast\partial^{\alpha_{2}-\beta_{2}}\xi_{\ell}),
\end{align*}	
where we utilized the fact that for $\beta=\beta_{1}+\beta_{2}$,
\begin{align*}
&(f(x,t)\ast\partial^{\beta_{1}}\eta_{\ell})\ast\partial^{\beta_{2}}\xi_{\ell}=
\begin{cases}
f(x,t),&\text{if }|\beta|=0,\\
0,&\text{if }|\beta|>0.	
\end{cases}	
\end{align*}	
Denote
\begin{align*}
h(x,\cdot):=\int(f(x-y,\cdot)-f(x,t))(g(x-y,\cdot)-g(x,t))\partial^{\alpha_{1}}\eta_{\ell}(y)dy.
\end{align*}
Then we have
\begin{align*}
(((f-f(x,t))(g-g(x,t)))\ast\partial^{\alpha_{1}}\eta_{\ell})\ast\partial^{\alpha_{2}}\xi_{\ell}=h(x,\cdot)\ast\partial^{\alpha_{2}}\xi_{\ell}.
\end{align*}
Combining Minkowski's integral inequality, Young's convolution inequality, the integral form of mean value theorem and H\"{o}lder's inequality, we obtain 
\begin{align*}
&\|(((f-f(x,t))(g-g(x,t)))\ast\partial^{\alpha_{1}}\eta_{\ell})\ast\partial^{\alpha_{2}}\xi_{\ell}\|_{C_{t}L^{p}}\notag\\
&\leq\big\|\|h\|_{L^{p}}\ast|\partial^{\alpha_{2}}\xi_{\ell}|\big\|_{C_{t}}\leq \|h\|_{C_{t}L^{p}}\|\partial^{\alpha_{2}}\xi_{\ell}\|_{L^{1}_{t}}\notag\\
&\leq\|\partial^{\alpha_{2}}\xi_{\ell}\|_{L^{1}_{t}}\int\|f(\cdot-y)-f(\cdot)\|_{C_{t}L^{2p}}\|g(\cdot-y)-g(\cdot)\|_{C_{t}L^{2p}}|\partial^{\alpha_{1}}\eta_{\ell}(y)|dy\notag\\
&\leq\|\partial^{\alpha_{2}}\xi_{\ell}\|_{L^{1}_{t}}\|\nabla f\|_{C_{t}L^{2p}}\|\nabla g\|_{C_{t}L^{2p}}\int|y|^{2}|\partial^{\alpha_{1}}\eta_{\ell}(y)|dy\lesssim\ell^{2-k}\|\nabla f\|_{C_{t}L^{2p}}\|\nabla g\|_{C_{t}L^{2p}}.
\end{align*}	
By the same argument, we have
\begin{align*}
&\|(((f-f(x,t))\ast\partial^{\beta_{1}}\eta_{\ell})\ast\partial^{\beta_{2}}\xi_{\ell})(((g-g(x,t))\ast\partial^{\alpha_{1}-\beta_{1}}\eta_{\ell})\ast\partial^{\alpha_{2}-\beta_{2}}\xi_{\ell})\|_{C_{t}L^{p}}\notag\\
&\leq\|((f-f(x,t))\ast\partial^{\beta_{1}}\eta_{\ell})\ast\partial^{\beta_{2}}\xi_{\ell}\|_{C_{t}L^{2p}}\|((g-g(x,t))\ast\partial^{\alpha_{1}-\beta_{1}}\eta_{\ell})\ast\partial^{\alpha_{2}-\beta_{2}}\xi_{\ell}\|_{C_{t}L^{2p}}\notag\\
&\lesssim\ell^{2-k}\|\nabla f\|_{C_{t}L^{2p}}\|\nabla g\|_{C_{t}L^{2p}}.
\end{align*}
Combining these above inequalities, we complete the proof.
\end{proof}
 
Fix the value of $\ell$ as follows:
\begin{align}\label{ZQ007}
\lambda_{m}^{-6}\vartheta_{m+1}^{\frac{q-2}{q}}\ll\ell:=\frac{\vartheta_{m+1}^{\frac{q-2}{q}}\delta_{m+1}^{1/2}}{\lambda_{m}^{5}\delta^{1/2}_{m}}\ll\lambda_{m}^{-5}\vartheta_{m+1}^{\frac{q-2}{q}}.
\end{align}	
A direct application of Lemma \ref{lem01} gives the following estimates. 
\begin{lemma}\label{lem02}
For any $k\geq1$, we obtain
\begin{align*}
\|u_{\ell}-u_{m}\|_{C_{t}L^{q}}\lesssim\delta_{m+1}^{1/2}\lambda_{m}^{-4+\varepsilon}\vartheta_{m+1}^{\frac{q-2}{q}},\quad\|\nabla^{k}u_{\ell}\|_{C_{t}L^{q}}\lesssim\ell^{1-k}\lambda_{m}^{1+\varepsilon}\delta^{1/2}_{m},
\end{align*}
and
\begin{align*}
\|\mathring{R}_{\ell}\|_{C_{t}L^{1}}\leq2\vartheta_{m+1}^{\frac{q-2}{q}}\delta_{m+1}\lambda_{m}^{-2\varepsilon},\quad\|\nabla^{k}_{x,t}\mathring{R}_{\ell}\|_{C_{t}L^{1}}\lesssim\ell^{-k}\vartheta_{m+1}^{\frac{q-2}{q}}\delta_{m+1}\lambda_{m}^{-2\varepsilon}.
\end{align*}		
\end{lemma}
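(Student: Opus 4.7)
The strategy is to decompose each of the four quantities into pieces amenable to standard mollification arguments and to the commutator estimate of Lemma \ref{lem01}, and then to match the resulting powers of $\ell$, $\lambda_m$ and $\delta_m$ using the precise value of $\ell$ fixed in \eqref{ZQ007}. For the first inequality I write
\begin{align*}
u_\ell-u_m = \bigl((u_m\ast_{x}\eta_\ell)-u_m\bigr)\ast_{t}\xi_\ell + \bigl(u_m\ast_{t}\xi_\ell-u_m\bigr),
\end{align*}
and bound each piece by $\ell$ times a derivative norm of $u_m$. The spatial piece is controlled by $\ell\|\nabla u_m\|_{C_{t}L^q}\leq \ell\lambda_m^{1+\varepsilon}\delta_m^{1/2}$ via the second estimate in \eqref{Z09}, and upon inserting the definition of $\ell$ from \eqref{ZQ007} this becomes exactly $\delta_{m+1}^{1/2}\lambda_m^{-4+\varepsilon}(\sigma^{2}r)^{(q-2)/q}$; the temporal piece is subordinate and is absorbed using the $C^{1}_{x,t}$ bound on $u_m$ together with the admissible range of $\ell$ in \eqref{ZQ007}. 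For $\|\nabla^{k}u_\ell\|_{C_{t}L^q}$, I transfer one derivative onto $u_m$ and distribute the remaining $k-1$ derivatives onto the spatial mollifier, writing $\nabla^{k}u_\ell=(\nabla u_m\ast_{x}\nabla^{k-1}\eta_\ell)\ast_{t}\xi_\ell$; Young's convolution inequality combined with $\|\nabla^{k-1}\eta_\ell\|_{L^{1}}\lesssim\ell^{1-k}$ and $\|\xi_\ell\|_{L^{1}}=1$ then yields the claim directly from \eqref{Z09}.

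For the two estimates on $\mathring{R}_\ell$, I split the defining formula into the mollified stress $(\mathring{R}_m\ast_{x}\eta_\ell)\ast_{t}\xi_\ell$ and the quadratic commutator $u_\ell\mathring{\otimes}u_\ell-((u_m\mathring{\otimes}u_m)\ast_{x}\eta_\ell)\ast_{t}\xi_\ell$. The mollified-stress piece and its space-time derivatives are handled directly by Young's convolution inequality and the inductive bound \eqref{QZ003}, contributing $\lesssim\ell^{-k}(\sigma^{2}r)^{(q-2)/q}\delta_{m+1}\lambda_m^{-2\varepsilon}$. For the commutator I apply Lemma \ref{lem01} componentwise with $p=1$ and with $f,g$ equal to scalar components of $u_m$; the embedding $L^{q}\hookrightarrow L^{2}$ on $\mathbb{T}^{3}$ (valid since $q>2$) yields $\|\nabla u_m\|_{C_{t}L^{2}}\lesssim\lambda_m^{1+\varepsilon}\delta_m^{1/2}$, so the commutator contributes $\lesssim\ell^{2-k}\lambda_m^{2+2\varepsilon}\delta_m$, which after substitution of \eqref{ZQ007} becomes $\ell^{-k}(\sigma^{2}r)^{2(q-2)/q}\delta_{m+1}\lambda_m^{-8+2\varepsilon}$.

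The main obstacle is to verify that this commutator contribution is strictly subdominant with respect to the stress contribution, since the $k=0$ bound requires a precise numerical constant $2$ rather than an abstract $\lesssim$. The ratio of the two pieces is $(\sigma^{2}r)^{(q-2)/q}\lambda_m^{-8+4\varepsilon}$, which is much less than $1$ by \eqref{ZQ009} (whence $\sigma,r\ll1$) together with the choice $\varepsilon<\varepsilon_\ast/2<1/8$, and can be made as small as desired by enlarging $a$. This slack absorbs the commutator as a lower-order perturbation and yields both the factor $2$ in the $\|\mathring{R}_\ell\|_{C_{t}L^{1}}$ bound and the $\lesssim$-bounds for the higher space-time derivatives of $\mathring{R}_\ell$; all four estimates of the lemma then follow by combining the pieces.
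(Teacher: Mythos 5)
Your proposal follows essentially the same route as the paper's proof: reduce the $u$-estimates to the spatial mollification error via the mean value theorem and Young's convolution inequality, split $\mathring{R}_{\ell}$ into the mollified stress plus the quadratic commutator, apply Lemma \ref{lem01} with $p=1$ together with the embedding $L^{q}\hookrightarrow L^{2}$, and check the numerology through \eqref{ZQ007}; in particular your computation that the commutator contributes $\ell^{-k}(\sigma^{2}r)^{2(q-2)/q}\delta_{m+1}\lambda_{m}^{-8+2\varepsilon}$ and is subdominant by the factor $(\sigma^{2}r)^{(q-2)/q}\lambda_{m}^{-8+4\varepsilon}$ is exactly the paper's. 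The one quantitative claim that does not check out is your treatment of the temporal mollification error in the first estimate: the $C^{1}_{x,t}$ bound gives only $\ell\|u_{m}\|_{C^{1}_{x,t}}\leq\ell\lambda_{m}^{4}=(\sigma^{2}r)^{\frac{q-2}{q}}\delta_{m+1}^{1/2}\lambda_{m}^{-1}\delta_{m}^{-1/2}$, and matching the target $\delta_{m+1}^{1/2}\lambda_{m}^{-4+\varepsilon}(\sigma^{2}r)^{\frac{q-2}{q}}$ would require $\lambda_{m}^{3-\varepsilon}\lesssim\delta_{m}^{1/2}$, which is false; the inductive hypotheses control $\nabla u_{m}$ in $C_{t}L^{q}$ but not $\partial_{t}u_{m}$ in any norm better than $L^{\infty}$. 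This is a defect shared with the paper, whose proof silently replaces $\|u_{\ell}-u_{m}\|_{C_{t}L^{q}}$ by $\|u_{m}\ast\eta_{\ell}-u_{m}\|_{C_{t}L^{q}}$ without addressing the time mollification at all; what is genuinely available for that term is the weaker bound $\ell\lambda_{m}^{4}\ll\delta_{m+1}^{1/2}$, so you should either state that weaker rate or note explicitly that only the spatial part attains the exponent $-4+\varepsilon$.
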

\begin{proof}
A combination of Minkowski's integral inequality, Young's convolution inequality, the integral form of mean value theorem,  \eqref{Z09} and \eqref{ZQ007} leads to that
\begin{align*}
\|u_{\ell}-u_{m}\|_{C_{t}L^{q}}\lesssim &\|u_{m}\ast\eta_{\ell}-u_{m}\|_{C_{t}L^{q}}\lesssim \ell\|\nabla u_{m}\|_{C_{t}L^{q}}\notag\\
\lesssim&\ell\lambda_{m}^{1+\varepsilon}\delta_{m}^{1/2}\lesssim\delta_{m+1}^{1/2}\lambda_{m}^{-4+\varepsilon}\vartheta_{m+1}^{\frac{q-2}{q}},
\end{align*}
and
\begin{align*}
\|\nabla^{k}u_{\ell}\|_{C_{t}L^{q}}\lesssim\ell^{1-k}\|\nabla u_{m}\|_{C_{t}L^{q}}\lesssim\ell^{1-k}\lambda_{m}^{1+\varepsilon}\delta_{m}^{1/2}.
\end{align*}		
Making use of \eqref{QZ003}, \eqref{ZQ007}, H\"{o}lder's inequality, Young's convolution inequality and Lemma \ref{lem01}, we deduce that
\begin{align*}
\|\mathring{R}_{\ell}\|_{C_{t}L^{1}}\leq&\|\mathring{R}_{m}\|_{C_{t}L^{1}}+C\ell^{2}\|\nabla u_{m}\|^{2}_{C_{t}L^{q}}\notag\\
\leq&\vartheta_{m+1}^{\frac{q-2}{q}}\delta_{m+1}\lambda_{m}^{-2\varepsilon}+C\ell^{2}\delta_{m}\lambda_{m}^{2+2\varepsilon} \leq 2\vartheta_{m+1}^{\frac{q-2}{q}}\delta_{m+1}\lambda_{m}^{-2\varepsilon},
\end{align*}
and
\begin{align*}
\|\nabla^{k}_{x,t}\mathring{R}_{\ell}\|_{C_{t}L^{1}}\lesssim&\ell^{-k}\|\mathring{R}_{m}\|_{C_{t}L^{1}}+\ell^{2-k}\|\nabla u_{m}\|^{2}_{C_{t}L^{q}}\notag\\
\lesssim&\ell^{-k}\vartheta_{m+1}^{\frac{q-2}{q}}\delta_{m+1}\lambda_{m}^{-2\varepsilon}+\ell^{2-k}\delta_{m}\lambda_{m}^{2+2\varepsilon} \lesssim \ell^{-k}\delta_{m+1}\lambda_{m}^{-2\varepsilon}\vartheta_{m+1}^{\frac{q-2}{q}}.
\end{align*}	

\end{proof}	
\begin{remark}
If we take $\ell=\frac{\vartheta_{m+1}^{\frac{q-2}{2q}}\delta_{m+1}^{1/2}}{\delta_{m}^{1/2}\lambda_{m}^{1+2\varepsilon}}$, we can derive the equivalence of upper bound estimates on the Reynolds term $(\mathring{R}_{\ell}\ast_{x}\eta_{\ell})\ast_{t}\xi_{\ell}$ and the commutator $u_{\ell}\mathring{\otimes}u_{\ell}-((u_{m}\mathring{\otimes} u_{m})\ast_{x}\eta_{\ell})\ast_{t}\xi_{\ell}\big)$. However, for later use we choose a smaller number $\frac{\vartheta_{m+1}^{\frac{q-2}{q}}\delta_{m+1}^{1/2}}{\lambda_{m}^{5}\delta^{1/2}_{m}}$ as its value in order to ensure the following fact: for $|t-t'|\leq2\ell$,
\begin{align}\label{W01}
\bigg|\int_{\mathbb{T}^{3}}|u_{m}(x,t)|^{2}-\int_{\mathbb{T}^{3}}|u_{m}(x,t')|^{2}\bigg|\lesssim \ell\|u_{m}\|_{C_{t}^{1}L^{\infty}}\|u_{m}\|_{C_{t}L^{1}}\ll\lambda_{m}^{-1}\vartheta_{m+1}^{\frac{q-2}{q}}.
\end{align}	 
  
\end{remark}

\subsection{$L^{q}$-normalized intermittent jets}\label{IJ02}
In the following, we plan to remake intermittent jets in \cite{BCV2022} rather than intermittent Beltrami waves in \cite{BV2019} as basic building blocks due to the advantage of having disjoint supports, which is similar to the property of Mikado flows in \cite{DS2017} and meanwhile avoids the loss of intermittent dimension. Now we begin with recalling the decomposition lemma on positive definite matrix (see \cite{DS2017,S2013}) as follows.
\begin{lemma}\label{lem05}
There exist two disjoint subsets $\Lambda_{\alpha}\subset \mathbb{S}^{2}\cap\mathbb{Q}^{3}$, $\alpha=0,1$, and smooth positive functions $\gamma_{(\zeta)}\in C^{\infty}(B_{\frac{1}{2}}(\mathrm{Id}))$, $\zeta\in\Lambda_{0}\cup\Lambda_{1}$ such that for any symmetric matrix $R\in B_{\frac{1}{2}}(\mathrm{Id})$,
$$R=\sum_{\zeta\in\Lambda_{\alpha}}\gamma_{(\zeta)}^{2}(R)\zeta\otimes\zeta,\quad\mathrm{for}\;\alpha=0,1.$$	
\end{lemma}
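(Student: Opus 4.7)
The plan is to reduce Lemma \ref{lem05} to a linear-algebra fact about symmetric matrices together with a continuity argument. Since the space $\mathrm{Sym}_3$ of symmetric real $3\times 3$ matrices is $6$-dimensional, one needs at least six unit vectors $\zeta$ for the rank-one tensors $\zeta\otimes\zeta$ to span it. The key geometric observation is that the cone generated by $\{\zeta\otimes\zeta:\zeta\in\mathbb{S}^2\}$ is precisely the positive semidefinite cone in $\mathrm{Sym}_3$, and $\mathrm{Id}$ lies in its interior, so a strictly positive decomposition of $\mathrm{Id}$ exists.

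First, for each $\alpha=0,1$, I would construct a finite set $\Lambda_\alpha\subset \mathbb{S}^2\cap\mathbb{Q}^3$ with two properties: the family $\{\zeta\otimes\zeta:\zeta\in\Lambda_\alpha\}$ linearly spans $\mathrm{Sym}_3$, and $\mathrm{Id}$ admits a strictly positive representation $\mathrm{Id}=\sum_{\zeta\in\Lambda_\alpha}(c_\zeta^0)^2\,\zeta\otimes\zeta$ with all $c_\zeta^0>0$. A natural candidate is to take $\Lambda_\alpha$ to be the orbit of a generic rational unit vector under the rotational symmetry group of the cube; such an orbit is large enough to span $\mathrm{Sym}_3$, and its symmetric sum is automatically a positive multiple of $\mathrm{Id}$ by the standard averaging identity $\sum_{\zeta\in\mathrm{Orb}}\zeta\otimes\zeta=\tfrac{|\mathrm{Orb}|}{3}\,\mathrm{Id}$. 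Disjointness of $\Lambda_0$ and $\Lambda_1$ is arranged by starting from two inequivalent generic seeds in $\mathbb{S}^2\cap\mathbb{Q}^3$, which is possible because rational unit vectors are dense in $\mathbb{S}^2$.

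Next, I would solve $R=\sum_{\zeta}c_\zeta(R)\,\zeta\otimes\zeta$ for $R$ near $\mathrm{Id}$ by choosing any affine right inverse of the surjective linear map $(c_\zeta)_{\zeta\in\Lambda_\alpha}\mapsto \sum_\zeta c_\zeta\,\zeta\otimes\zeta$, normalized so that $(c_\zeta(\mathrm{Id}))=((c_\zeta^0)^2)$. The coefficients $c_\zeta(R)$ are then affine, hence smooth, in $R$, and by continuity they remain strictly positive on an open neighborhood of $\mathrm{Id}$. Setting $\gamma_{(\zeta)}(R):=\sqrt{c_\zeta(R)}$ produces smooth positive functions on that neighborhood for which the identity $R=\sum_{\zeta\in\Lambda_\alpha}\gamma_{(\zeta)}^2(R)\,\zeta\otimes\zeta$ holds by construction.

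The main obstacle is ensuring strict positivity of the coefficients on the specific ball $B_{1/2}(\mathrm{Id})$ rather than on an a priori small neighborhood. The radius of positivity is governed by $\min_\zeta c_\zeta^0$ divided by the operator norm of the chosen right inverse, so the remedy is to enlarge $\Lambda_\alpha$ (still within $\mathbb{S}^2\cap\mathbb{Q}^3$) with additional rational unit vectors in symmetric configurations. This simultaneously raises the baseline values $c_\zeta^0$ through averaging and lowers the norm of the right inverse, and one verifies that a sufficiently rich choice yields the prescribed radius $1/2$. This is exactly the classical geometric construction of \cite{DS2017,S2013}, which supplies both spanning families $\Lambda_0,\Lambda_1$ with the required properties.
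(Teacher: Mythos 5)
The paper does not actually prove Lemma \ref{lem05}: it is recalled verbatim from the literature with the citation \cite{DS2017,S2013}, so there is no in-paper argument to measure yours against. Your sketch is the standard proof of this geometric lemma and its outline is sound: the tensors $\zeta\otimes\zeta$ with $\zeta\in\mathbb{S}^{2}\cap\mathbb{Q}^{3}$ span the six-dimensional space of symmetric matrices, $\mathrm{Id}$ lies in the interior of the convex cone they generate, an affine right inverse normalized at $\mathrm{Id}$ gives coefficients depending affinely (hence smoothly) on $R$, and strict positivity of the coefficients near $\mathrm{Id}$ lets you take smooth square roots; disjointness of $\Lambda_{0}$ and $\Lambda_{1}$ via two inequivalent rational seeds is also fine.

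The one step that does not hold up as written is your mechanism for reaching the prescribed radius $1/2$. By the very averaging identity you invoke, $\sum_{\zeta\in\mathrm{Orb}}\zeta\otimes\zeta=\tfrac{|\mathrm{Orb}|}{3}\mathrm{Id}$ forces $(c_{\zeta}^{0})^{2}=3/|\mathrm{Orb}|$, so enlarging $\Lambda_{\alpha}$ \emph{lowers} the baseline coefficients rather than raising them; the ratio $\min_{\zeta}(c_{\zeta}^{0})^{2}/\|L\|$ merely stabilizes as the directions equidistribute, and the limiting affine right inverse $\gamma_{(\zeta)}^{2}(R)\propto\tfrac{15}{2}\zeta^{T}R\zeta-\tfrac{3}{2}\mathrm{tr}\,R$ can be checked to become negative somewhere on $B_{1/2}(\mathrm{Id})$ (e.g.\ $R=\mathrm{Id}+\mathrm{diag}(-t,t/2,t/2)$ with $t$ near $1/2$ and $\zeta=e_{1}$). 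So a single global affine right inverse that stays positive on all of $B_{1/2}(\mathrm{Id})$ is not automatic, and the gap must be closed differently: either patch finitely many locally positive affine right inverses with a smooth partition of unity in the $R$ variable (convex combinations of admissible positive coefficient vectors remain admissible and positive), or simply prove the lemma with an unspecified radius $\epsilon_{0}>0$, which suffices for this paper since \eqref{K01} and \eqref{E86} only ever evaluate $\gamma_{(\zeta)}$ at matrices within distance $1/4$ of $\mathrm{Id}$. With that repair your argument is complete and coincides with the construction in \cite{DS2017,S2013}.
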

For any fixed $\zeta\in\Lambda_{0}\cup\Lambda_{1}$, denote by $A_{\zeta}\in\mathbb{S}^{2}\cap\mathbb{Q}^{3}$ a vector orthogonal to $\zeta$. Therefore, $\{\zeta,A_{\zeta},\zeta\times A_{\zeta}\}\subset\mathbb{S}^{2}\cap\mathbb{Q}^{3}$ forms an orthonormal basis in $\mathbb{R}^{3}$. Note that the number of elements in $\Lambda_{0}\cup\Lambda_{1}$ is finite. Then there exists a sufficiently large integer $N_{\Lambda}$ such that for any $\zeta\in\Lambda_{0}\cup\Lambda_{1}$,
\begin{align}\label{A01}
\{N_{\Lambda}\zeta,N_{\Lambda}A_{\zeta},N_{\Lambda}\zeta\times A_{\zeta}\}\subset N_{\Lambda}\mathbb{S}^{2}\cap\mathbb{N}^{3}.	
\end{align}	
For any $q\in(2,3)$, we choose a smooth function $\Phi:\mathbb{R}^{2}\rightarrow\mathbb{R}^{2}$ with support in the ball of radius $1$ satisfying that if $\phi=-\Delta\Phi,$ then 
\begin{align}\label{AQE01}
\int|\phi(x,y)|^{q}dxdy=1,\quad\dashint\phi^{2}(x,y)dxdy=c_{q},
\end{align}
for some positive constant $c_{q}>0$. Here and below, the notation $\dashint$ represents the integral average. Obviously there holds $\int\phi(x,y)dxdy=0$. Similarly, pick a smooth function $\psi:\mathbb{R}\rightarrow\mathbb{R}$ with support in a ball of radius $1$ such that
\begin{align}\label{AQE02}
\int\psi(z)dz=0,\quad\int|\psi(z)|^{q}dz=1,\quad \dashint\psi^{2}(z)dz=c_{q}^{\ast},
\end{align}
for some positive constant $c^{\ast}_{q}>0$. Define the following parameters:
\begin{align}\label{ZQ009}
	\sigma=\lambda_{m+1}^{-\frac{q+(q+2)\varepsilon_{\ast}}{3}},\quad r=\lambda_{m+1}^{-\frac{q-(4-q)\varepsilon_{\ast}}{3}},\quad\mu=\lambda_{m+1}^{1+2\varepsilon_{\ast}},	
\end{align}		
where $\varepsilon_{\ast}$ is given by \eqref{AAA01}. Remark that since $\lambda_{m+1}\gg1$ and $q\in(2,3)$, we have $\sigma\ll r\ll1$ and $\mu\gg1.$ From  \eqref{QZ003},  we see that $\vartheta_{m+1}=\sigma^{2}r$. We now concentrate the supports of $\phi,\Phi,\psi$ by defining the following rescaled functions:
\begin{align*}
\phi_{\sigma}(x,y)=\frac{\phi(x/\sigma,y/\sigma)}{\sigma^{2/q}},\quad	\Phi_{\sigma}(x,y)=\frac{\Phi(x/\sigma,y/\sigma)}{\sigma^{2/q}},\quad\psi_{r}(z)=\frac{\psi(z/r)}{r^{1/q}},
\end{align*}	
satisfying that $\phi_{\sigma}=-\sigma^{2}\Delta\Phi_{\sigma}$, and
\begin{align}\label{ZZW01}
\begin{cases}	
\dashint\phi_{\sigma}^{2}(x,y)dxdy=c_{q}\sigma^{\frac{2(q-2)}{q}},\quad\dashint\psi_{r}^{2}(z)dz=c_{q}^{\ast}r^{\frac{q-2}{q}},\\
\int|\phi_{\sigma}(x,y)|^{q}dxdy=\int|\psi_{r}(z)|^{q}dz=1,\\
\dashint\Phi_{\sigma}^{2}(x,y)dxdy\sim\sigma^{\frac{2(q-2)}{q}},\quad\int|\Phi_{\sigma}(x,y)|^{q}dxdy\sim1.
\end{cases}
\end{align}	
From \eqref{ZZW01}, we see that the rescaled functions $\phi_{\sigma},\Phi_{\sigma}, \psi_{r}$ are $L^{q}$-normalized. 

We now perform the periodic extensions for $\Phi_{\sigma},\phi_{\sigma}$ and $\psi_{r}$ such that they become functions defined on $\mathbb{T}^{2}$ and $\mathbb{T}$, respectively.  For simplicity, still denote the periodized functions as $\Phi_{\sigma},\phi_{\sigma},\psi_{r}$. Write $\lambda:=\lambda_{m+1}$. Define highly concentrated and oscillated functions $V_{(\zeta)}:\mathbb{T}^{3}\times\mathbb{R}\rightarrow\mathbb{R}$ by
\begin{align*}
&V_{(\zeta)}:=V_{\zeta,\sigma,r,\mu,\lambda}(x,t)\notag\\
&:=\frac{1}{(\lambda N_{\Lambda})^{2}}\psi_{r}(N_{\Lambda}\lambda\sigma(x\cdot\zeta+\mu t))\Phi_{\sigma}\big(N_{\Lambda}\lambda\sigma(x-\alpha_{\zeta})\cdot A_{\zeta},N_{\Lambda}\lambda\sigma(x-\alpha_{\zeta})\cdot(\zeta\times A_{\zeta})\big)\zeta.
\end{align*}	
Here $\alpha_{\zeta}\in\mathbb{R}^{3}$, $\zeta\in\Lambda_{0}\cup\Lambda_{1}$ represent the translations so that any two functions in $\{V_{(\zeta)}\}$ have disjoint supports. Now we introduce the $L^{q}$-normalized intermittent jets as follows:
\begin{align*}
&W_{(\zeta)}:=W_{\zeta,\sigma,r,\mu,\lambda}(x,t)\notag\\
&:=\psi_{r}(N_{\Lambda}\lambda\sigma(x\cdot\zeta+\mu t))\phi_{\sigma}\big(N_{\Lambda}\lambda\sigma(x-\alpha_{\zeta})\cdot A_{\zeta},N_{\Lambda}\lambda\sigma(x-\alpha_{\zeta})\cdot(\zeta\times A_{\zeta})\big)\zeta.
\end{align*}	
From \eqref{AZ90}, we obtain that $\lambda\sigma$ is an integer and therefore $V_{(\zeta)}$ and $W_{(\zeta)}$ are smooth building blocks. For simplicity of notations, denote
\begin{align*}
\psi_{(\zeta)}:=&\psi_{\zeta,\sigma,r,\mu,\lambda}:=\psi_{r}(N_{\Lambda}\lambda\sigma(x\cdot\zeta+\mu t)),\\
\Phi_{(\zeta)}:=&\Phi_{\zeta,\sigma,r,\lambda}:=\phi_{\sigma}\big(N_{\Lambda}\lambda\sigma(x-\alpha_{\zeta})\cdot A_{\zeta},N_{\Lambda}\lambda\sigma(x-\alpha_{\zeta})\cdot(\zeta\times A_{\zeta})\big)\zeta,\\
\phi_{(\zeta)}:=&\phi_{\zeta,\sigma,r,\lambda}:=\phi_{\sigma}\big(N_{\Lambda}\lambda\sigma(x-\alpha_{\zeta})\cdot A_{\zeta},N_{\Lambda}\lambda\sigma(x-\alpha_{\zeta})\cdot(\zeta\times A_{\zeta})\big)\zeta.
\end{align*}	
In view of \eqref{A01} and \eqref{ZZW01}, we know that $W_{(\zeta)}$ is $(\mathbb{T}/N_{\Lambda}\lambda\sigma)^{3}$-periodic, 
\begin{align*}
\int W_{(\zeta)}dx=0,\quad W_{(\zeta)}\otimes W_{(\zeta')}\equiv 0,\;\,\text{if }\zeta\neq\zeta'\in \Lambda_{0}\cup\Lambda_{1},
\end{align*}	
and 
\begin{align*}
\int|W_{(\zeta)}|^{q}dx=1,\quad\dashint W_{(\zeta)}\otimes W_{(\zeta)}dx=c_{q}c_{q}^{\ast}(\sigma^{2}r)^{\frac{q-2}{q}}\zeta\otimes\zeta.
\end{align*}	
This, in combination with Lemma \ref{lem05}, shows that for any symmetric matrix $R\in B_{\frac{1}{2}}(\mathrm{Id})$,
\begin{align}\label{ZQ10}
\sum\limits_{\zeta\in\Lambda_{\alpha}}\gamma_{(\zeta)}^{2}(R)	\dashint W_{(\zeta)}\otimes W_{(\zeta)}dx=c_{q}c_{q}^{\ast}(\sigma^{2}r)^{\frac{q-2}{q}}R,\quad\alpha=0,1.
\end{align}	
From these above facts, we see that these intermittent jets $W_{(\zeta)}$, $\zeta\in\Lambda_{0}\cup\Lambda_{1}$ belong to $L^{q}$-normalized building blocks with high concentration and oscillation. According to the roles of $\lambda,\sigma,r,\mu$ played in the above constructions,  $r$ is a concentration parameter, $\lambda,\mu$ are two oscillation parameters, while the parameter $\sigma$ plays both roles in concentration and oscillation. It is worth emphasizing that the single concentration only generates sparsely distributed high-amplitude functions with small support. Building on this, we further add high oscillation to produce highly densely distributed high-amplitude functions with smaller support, whose graphs resemble densely distributed Dirac delta functions. These constructions are actually rooted in the fact that smooth solutions are unique and conversely we need to construct a converging sequence of solutions with these sharp features so as to make their limit be in some weak Lebesgue spaces. In addition, by a straightforward computation, these $L^{q}$-normalized building blocks satisfy that for $1\leq p\leq\infty$, $j,k\geq0$,
\begin{align}\label{M01}
\begin{cases}	
\|\partial_{t}^{j}\nabla^{k}\psi_{(\zeta)}\|_{L^{p}}\lesssim r^{1/p-1/q}\big(\frac{\lambda\sigma\mu}{r}\big)^{j}\big(\frac{\lambda\sigma}{r}\big)^{k},\\
\|\nabla^{k}\phi_{(\zeta)}\|_{L^{p}}+\|\nabla^{k}\Phi_{(\zeta)}\|_{L^{p}}\lesssim\sigma^{2(1/p-1/q)}\lambda^{k},\\
\|\partial_{t}^{j}\nabla^{k}W_{(\zeta)}\|_{L^{p}}+\lambda^{2}\|\partial_{t}^{j}\nabla^{k}V_{(\zeta)}\|_{L^{p}}\lesssim(\sigma^{2}r)^{1/p-1/q}\lambda^{k}\big(\frac{\lambda\sigma\mu}{r}\big)^{j},
\end{cases}	
\end{align}	
where we also utilized the fact of $\sigma\ll r.$

\subsection{Construction on the perturbation}
To begin with, we see that the perturbation $w_{m+1}:=u_{m+1}-u_{\ell}$ should satisfy
\begin{align*}
	\mathrm{div}\mathring{R}_{m+1}-\nabla(p_{m+1}-p_{\ell})=&\partial_{t}w_{m+1}+\mathrm{div}(w_{m+1}\otimes w_{m+1}+\mathring{R}_{\ell})-\nu\Delta w_{m+1}\notag\\
	&+\mathrm{div}(u_{\ell}\otimes w_{m+1}+w_{m+1}\otimes u_{\ell}).
\end{align*}	
From this equation, we see that the problem of constructing the new solution $(u_{m+1},\mathring{R}_{m+1})$ is reduced to construction of the perturbation $w_{m+1}$, whose core lies in making the constructed tensor $w_{m+1}\otimes w_{m+1}$ cancel the previous Reynolds stress $\mathring{R}_{\ell}$. Observe that $\mathring{R}_{\ell}$ is a symmetric matrix. So it is natural to consider whether we could construct a special $w_{m+1}$ such that $w_{m+1}\otimes w_{m+1}=\rho(\mathrm{Id}-\mathring{R}_{\ell}/\rho)$ with the divergence of $\rho\mathrm{Id}$ being put into the pressure. To further simplify the problem, we need to find suitable function $\rho$ so that $\mathrm{Id}-\mathring{R}_{\ell}/\rho$ is a positive definite matrix and thus can be decomposed into a combination of explicit basis matrices with positive coefficients in virtue of Lemma \ref{lem05}. This in turn determines the exact form of the perturbation $w_{m+1}$. Following this rough idea, we carry out strict construction on the perturbation step by step in the following. 

Introduce a smooth function $\hat{\chi}:\mathbb{R}^{3\times3}\rightarrow[0,1]$ satisfying that
\begin{align*}
\hat{\chi}(y)=
\begin{cases}
1,&\text{if }0\leq|y|\leq\frac{3}{4},\\
0,&\text{if }|y|\geq1.	
\end{cases}		
\end{align*}	
For $i\geq0$, define $\hat{\chi}_{i}(y)=\hat{\chi}(4^{-i}y)$ and
\begin{align*}
\tilde{\chi}_{i}(y)=
\begin{cases}
(\hat{\chi}_{i}-\hat{\chi}_{i-1})^{\frac{1}{2}},&\text{if }i\geq1,\\
\hat{\chi}_{0}^{\frac{1}{2}},&\text{if }i=0,	
\end{cases}		
\end{align*}	
which implies that $\sum_{i\geq0}\tilde{\chi}^{2}_{i}\equiv1$ on $\mathbb{R}^{3\times3}$.  Using this partition of unity, we split the Reynolds stress as follows: for any $i\geq0$,
\begin{align*}
\chi_{(i)}(x,t):=\chi_{i,m+1}(x,t)=\tilde{\chi}_{i}\bigg(\frac{\mathring{R}_{\ell}(x,t)}{\vartheta_{m+1}^{\frac{q-2}{q}}\delta_{m+1}\lambda_{m}^{-2\varepsilon}}\bigg).	
\end{align*}	
Denote $\rho_{i}:=4^{i+1}\vartheta_{m+1}^{\frac{q-2}{q}}\delta_{m+1}\lambda_{m}^{-2\varepsilon}$ for $i\geq1$, and $\rho_{0}:=((\rho^{1/2})\ast_{t}\xi_{\ell})^{2}$, where
\begin{align*}
\rho(t):=\frac{\max\{\tilde{e}(t)-2^{-1}\vartheta_{m+2}^{\frac{q-2}{q}}\delta_{m+2},0\}}{3\int_{\mathbb{T}^{3}}\chi^{2}_{(0)}dx},
\end{align*}
and
\begin{align*}
\tilde{e}(t):=e(t)-\int_{\mathbb{T}^{3}}|u_{m}|^{2}dx-3\sum_{i\geq1}\rho_{i}\int_{\mathbb{T}^{3}}\chi^{2}_{(i)}dx.
\end{align*}	
Using the Chebyshev's inequality and Lemma \ref{lem02}, we have
\begin{align*}
|\{x:|\mathring{R}_{\ell}|\geq 4|\mathbb{T}^{3}|^{-1}\vartheta_{m+1}^{\frac{q-2}{q}}\delta_{m+1}\lambda_{m}^{-2\varepsilon}\}|\leq\frac{|\mathbb{T}^{3}|\|\mathring{R}_{\ell}\|_{L^{1}}}{4\vartheta_{m+1}^{\frac{q-2}{q}}\delta_{m+1}\lambda_{m}^{-2\varepsilon}}	\leq\frac{|\mathbb{T}^{3}|}{2},
\end{align*}	
which indicates that $\int_{\mathbb{T}^{3}}\chi_{(0)}^{2}dx\geq\frac{|\mathbb{T}^{3}|}{2}$. Then it follows from Young's convolution inequality that
\begin{align}\label{Q08}
\|\rho_{0}\|_{L^{\infty}}\leq&\|\rho\|_{L^{\infty}}\leq\frac{\|e(\cdot)-\int_{\mathbb{T}^{3}}|u_{m}|^{2}dx\|_{L^{\infty}}}{3\int_{\mathbb{T}^{3}}\chi_{(0)}^{2}dx}\leq\frac{2\vartheta_{m+1}^{\frac{q-2}{q}}\delta_{m+1}}{3|\mathbb{T}^{3}|}\leq\vartheta_{m+1}^{\frac{q-2}{q}}\delta_{m+1},
\end{align}	
and
\begin{align}\label{D009}
\|\rho_{0}\|_{C^{k}_{t}}\lesssim\vartheta_{m+1}^{\frac{q-2}{q}}\delta_{m+1}\ell^{-k},\quad\|\rho_{0}^{1/2}\|_{C_{t}^{k}}\lesssim\vartheta_{m+1}^{\frac{q-2}{2q}}\delta^{1/2}_{m+1}\ell^{-k},\quad\text{for }k\geq1.	
\end{align}	
Denote by $i_{\max}$ the smallest integer such that
\begin{align*} 
\|\mathring{R}_{\ell}\|_{L^{\infty}}\leq C\|\mathring{R}_{\ell}\|_{W^{4,1}}\leq C\ell^{-4}\|\mathring{R}_{\ell}\|_{L^{1}}\leq C\ell^{-4}\vartheta_{m+1}^{\frac{q-2}{q}}\delta_{m+1}\lambda_{m}^{-2\varepsilon}\leq 3\rho_{i_{\max}}/16,
\end{align*}
which implies that $\chi_{(i)}\equiv0$ on $\mathbb{R}^{3\times3}$ for $i>i_{\max}\sim\ln\lambda_{m+1}$. For later use, we have from \eqref{QZ003} and \eqref{ZQ007}  that $\ell^{-4}=\lambda_{m+1}^{4[(q-2)(1+\varepsilon_{\ast})+\frac{5+\beta(b-1)}{b}]}$, and then
\begin{align}\label{Z90}
2^{i_{\max}}\sim\lambda_{m+1}^{2[(q-2)(1+\varepsilon_{\ast})+\frac{5+\beta(b-1)}{b}]}.	
\end{align}	

Note that
\begin{align}\label{K01}
\frac{3}{64}\leq\bigg|\frac{\mathring{R}_{\ell}}{\rho_{i}}\bigg|\leq\frac{1}{4},\quad\text{for }x\in\mathrm{supp}\chi_{(i)}, \;i\geq1,
\end{align}	
which yields that $\mathrm{Id}-\frac{\mathring{R}_{\ell}}{\rho_{i}}\in B_{\frac{1}{2}}(\mathrm{Id})$. This, together with the Chebyshev's inequality, leads to that if $a\gg1$,
\begin{align}\label{ZQ015}
3\sum_{i\geq1}\rho_{i}\int_{\mathbb{T}^{3}}\chi^{2}_{(i)}dx\leq&3\sum^{i_{\max}}_{i\geq1}\rho_{i}|\mathrm{supp}\chi_{(i)}|\leq3\sum^{i_{\max}}_{i\geq1}\rho_{i}\bigg|\bigg\{x:\bigg|\frac{\mathring{R}_{\ell}}{\rho_{i}}\bigg|\geq\frac{3}{64}\bigg\}\bigg|\notag\\
\lesssim&i_{\max}\|\mathring{R}_{\ell}\|_{C_{t}L^{1}}\leq \vartheta_{m+1}^{\frac{q-2}{q}}\delta_{m+1}\lambda_{m}^{-\varepsilon}.
\end{align}	
By a slight abuse of notation, still write
\begin{align}\label{E81}
\frac{\mathring{R}_{\ell}}{\rho_{0}(t)}=
\begin{cases}
\frac{\mathring{R}_{\ell}}{\rho_{0}(t)},&\text{if }\chi_{(0)}\neq0\;\text{and }\mathring{R}_{\ell}\neq0,\\
0,& \text{otherwise}.
\end{cases}		
\end{align}	
Remark that if $\chi_{(0)}\neq0\;\text{and }\mathring{R}_{\ell}\neq0$, we see from \eqref{QZ005} and \eqref{ZQ015} that $\rho_{0}>0$. To demonstrate $\|\mathring{R}_{\ell}/\rho_{0}\|_{L^{\infty}(\mathrm{supp}\chi_{(0)})}\leq1/4$ and thus $\mathrm{Id}-\frac{\mathring{R}_{\ell}}{\rho_{0}}\in B_{\frac{1}{2}}(\mathrm{Id})$, it suffices to prove that in the case when $\chi_{(0)}\neq0\;\text{and }\mathring{R}_{\ell}\neq0$,
\begin{align}\label{E86}
\rho_{0}\geq 16\vartheta_{m+1}^{\frac{q-2}{q}}\delta_{m+1}\lambda_{m}^{-2\varepsilon},\quad\text{on supp}\chi_{(0)}.
\end{align}
First, we have from Lemma \ref{lem02} that for any $|t_{1}-t_{2}|<\ell$,
\begin{align}\label{W08}
&\bigg|\sum_{i\geq1}\rho_{i}\int_{\mathbb{T}^{3}}\big(\chi_{(i)}^{2}(x,t_{1})-\chi_{(i)}^{2}(x,t_{2})\big)dx\bigg|\leq2\sum_{i\geq1}\rho_{i}\ell\|\partial_{t}\chi_{(i)}\|_{C_{t}L^{1}}\notag\\
&\lesssim\sum_{i\geq1}\ell\|\partial_{t}\mathring{R}_{\ell}\|_{C_{t}L^{1}}\lesssim\vartheta_{m+1}^{\frac{q-2}{q}}\delta_{m+1}\lambda_{m}^{-\varepsilon}.
\end{align}	
This, in combination with \eqref{QZ005}, \eqref{W01} and \eqref{ZQ015}, shows that if $0<b\beta<1/2$ and $a\gg1$,
\begin{align}\label{W902}
	&|\rho_{0}-\rho|=|(\rho^{1/2}\ast\xi_{\ell}+\rho^{1/2})(\rho^{1/2}\ast\xi_{\ell}-\rho^{1/2})|\notag\\
	&\leq2\|\rho^{1/2}\|_{L^{\infty}_{t}}\int_{0}^{T}|\rho^{1/2}(t-s)-\rho^{1/2}(t)|\xi_{\ell}(s)ds\notag\\
&\leq2\|\rho^{1/2}\|_{L^{\infty}_{t}}\int_{0}^{T}|\rho(t-s)-\rho(t)|^{1/2}\xi_{\ell}(s)ds\lesssim\frac{\vartheta_{m+1}^{\frac{q-2}{q}}\delta_{m+1}^{1/2}}{\lambda_{m}^{1/2}}\leq\frac{\vartheta_{m+1}^{\frac{q-2}{q}}\delta_{m+1}}{100|\mathbb{T}^{3}|},
\end{align}
which reads that
\begin{align*}
\rho_{0}\geq&\rho-|\rho_{0}-\rho|\geq \frac{\tilde{e}(t)-\frac{\vartheta_{m+2}^{\frac{q-2}{q}}\delta_{m+2}}{2}}{3\int_{\mathbb{T}^{3}}\chi^{2}_{(0)}dx}-\frac{\vartheta_{m+1}^{\frac{q-2}{q}}\delta_{m+1}}{100|\mathbb{T}^{3}|}\notag\\
\geq&\frac{\vartheta_{m+1}^{\frac{q-2}{q}}\delta_{m+1}}{50|\mathbb{T}^{3}|}\geq16\vartheta_{m+1}^{\frac{q-2}{q}}\delta_{m+1}\lambda_{m}^{-2\varepsilon}.
\end{align*}	
Therefore, \eqref{E86} holds. These facts actually reflect the reason why the second condition in \eqref{QZ005} is imposed, which essentially stems from the handling for the difficulty induced by potential zero points of $e(t)$.

For $i\geq0,$ we introduce the coefficient function as follows:
\begin{align}\label{ZQ29}
a_{(\zeta)}:=a_{\zeta,i,m+1}:=(c_{q}c_{q}^{\ast})^{-1/2}\vartheta_{m+1}^{-\frac{q-2}{2q}}\rho_{i}^{1/2}\chi_{(i)}\gamma_{(\zeta)}\bigg(\mathrm{Id}-\frac{\mathring{R}_{\ell}}{\rho_{i}}\bigg),
\end{align}	
where $\vartheta_{m+1}$ and $c_{q},c_{q}^{\ast}$ are, respectively, given by \eqref{QZ003} and \eqref{AQE01}--\eqref{AQE02}. Then we construct the perturbation $w_{m+1}:=w_{m+1}^{(p)}+w_{m+1}^{(c)}+w_{m+1}^{(t)}$, where 
\begin{align*}
w_{m+1}^{(p)}:=&\sum\limits_{i\geq0}\sum\limits_{\zeta\in\Lambda_{(i)}}a_{(\zeta)}W_{(\zeta)},\\
w_{m+1}^{(c)}:=&\sum\limits_{i\geq0}\sum\limits_{\zeta\in\Lambda_{(i)}}\Big(\mathrm{curl}(\nabla a_{(\zeta)}\times V_{(\zeta)})+\frac{1}{(N_{\Lambda}\lambda_{m+1})^{2}}\nabla(a_{(\zeta)}V_{(\zeta)})\times \mathrm{curl}(\Phi_{(\zeta)}\zeta)\Big),\\
w_{m+1}^{(t)}:=&-\frac{1}{\mu}\sum\limits_{i\geq0}\sum\limits_{\zeta\in\Lambda_{(i)}}\mathbb{P}_{H}\mathbb{P}_{\neq0}\big(a_{(\zeta)}^{2}\phi_{(\zeta)}^{2}\psi_{(\zeta)}^{2}\zeta\big).
\end{align*}	
Here $\Lambda_{(i)}:=\Lambda_{i\,\mathrm{mod}\,2}$. This notation is introduced to ensure that there has no cross term in the tensor product of $w_{m+1}^{(p)}\otimes w_{m+1}^{(p)}$. In fact, for $i,j\geq0$, $\zeta\in\Lambda_{(i)},\zeta'\in\Lambda_{(j)}$,  if $|i-j|=1$, we have $\zeta\neq\zeta'$, and if $|i-j|\geq2$, then $\chi_{(i)}\chi_{(j)}=0$. In either case, there all has $a_{(\zeta)}W_{(\zeta)}\otimes a_{(\zeta')}W_{(\zeta')}=0$.  Remark that $w_{m+1}^{(p)}$ is the principal part used to cancel the Reynolds stress $\mathring{R}_{\ell}$ such that
\begin{align*}
\mathrm{div}(w_{m+1}^{(p)}\otimes w_{m+1}^{(p)}+\mathring{R}_{\ell})\sim&	\frac{1}{\mu}\sum\limits_{i\geq0}\sum\limits_{\zeta\in\Lambda_{(i)}}\partial_{t}\mathbb{P}_{H}\mathbb{P}_{\neq0}\big(a_{(\zeta)}^{2}\phi_{(\zeta)}^{2}\psi_{(\zeta)}^{2}\zeta\big)\notag\\
&+\text{(pressure gradient)+(high frequence error)}.
\end{align*}	
The time corrector $w_{m+1}^{(t)}$ is then added to achieve
\begin{align*}
\partial_{t}w_{m+1}^{(t)}+\mathrm{div}(w_{m+1}^{(p)}\otimes w_{m+1}^{(p)}+\mathring{R}_{\ell})\sim\text{(pressure gradient)+(high frequence error)}.
\end{align*}	
Finally, the incompressibility corrector $w_{m+1}^{(c)}$ is introduced to make 
$$\mathrm{div}(w_{m+1}^{(p)}+w_{m+1}^{(c)})=\sum\limits_{i\geq0}\sum\limits_{\zeta\in\Lambda_{(i)}}\mathrm{div}\,\mathrm{curl}\,\mathrm{curl}(a_{(\zeta)}V_{(\zeta)})=0.$$

\subsection{Estimates on the perturbation}
We start from the estimates on the coefficient function $a_{(\zeta)}$ as follows.
\begin{lemma}\label{lem06}
For $1\leq p\leq\infty$, we have
\begin{align*}
\begin{cases}
\|a_{(\zeta)}\|_{L^{p}}\leq\sqrt[p]{11}(c_{q}c_{q}^{\ast})^{-1/2}\|\gamma_{(\zeta)}\|_{L^{\infty}}\delta_{m+1}^{1/2},&\text{if }i=0,\\
\|a_{(\zeta)}\|_{L^{p}}\lesssim2^{\frac{(p-2)i}{p}}\delta_{m+1}^{1/2}\lambda_{m}^{-\varepsilon},&\text{if } i\geq1,
\end{cases}
\end{align*}
and, for all $0\leq i\leq i_{\max}$ and $k\in\mathbb{N}$, 
\begin{align*}
\quad\|\nabla^{k}_{x,t}a_{(\zeta)}\|_{L^{\infty}}\lesssim\delta_{m+1}^{1/2}\ell^{-k-8},\quad\forall\,0\leq i\leq i_{\max}.
\end{align*}		

\end{lemma}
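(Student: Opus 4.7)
My plan is to derive all three estimates directly from the definition \eqref{ZQ29}, using as ingredients only: the bound \eqref{Q08} and the time-derivative estimates \eqref{D009} on $\rho_0^{1/2}$; the commutator estimates of Lemma \ref{lem02} on $\mathring R_\ell$; the closeness $|\rho_0-\rho|$ established in \eqref{W902}; the energy bookkeeping of \eqref{QZ005} and \eqref{ZQ015}; a Chebyshev-based estimate on $|\mathrm{supp}\,\chi_{(i)}|$; and the uniform smoothness of $\gamma_{(\zeta)}$ on $B_{1/2}(\mathrm{Id})$, which is legitimate since $\mathrm{Id}-\mathring R_\ell/\rho_i$ lies in that ball by \eqref{K01} for $i\geq 1$ and by \eqref{E86} for $i=0$. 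The three bounds are essentially independent computations.

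For the $L^p$ estimate when $i=0$ I would use that $\rho_0$ is independent of $x$ and write
\[
\|a_{(\zeta)}(\cdot,t)\|_{L^p_x}^p\lesssim (c_qc_q^\ast)^{-p/2}(\sigma^2r)^{-\frac{p(q-2)}{2q}}\|\gamma_{(\zeta)}\|_\infty^p\,\rho_0(t)^{p/2}\int_{\mathbb T^3}\chi_{(0)}^p\,dx.
\]
Since $0\leq\chi_{(0)}\leq 1$ and $p\geq 1$, I have $\chi_{(0)}^p\leq\chi_{(0)}$, and the Cauchy--Schwarz inequality yields $\int\chi_{(0)}\,dx\leq |\mathbb T^3|^{1/2}(\int\chi_{(0)}^2\,dx)^{1/2}$; splitting $\rho_0^{p/2}=\rho_0^{(p-1)/2}\cdot\rho_0^{1/2}$ and combining \eqref{Q08} with the identity $\rho_0\int_{\mathbb T^3}\chi_{(0)}^2\,dx\leq\tfrac{1}{3}(\sigma^2r)^{(q-2)/q}\delta_{m+1}$ (itself a consequence of the definitions of $\rho,\tilde e$ together with \eqref{QZ005}, \eqref{ZQ015}, and \eqref{W902}) cancels all factors of $(\sigma^2r)^{(q-2)/(2q)}$ and leaves $\delta_{m+1}^{1/2}$ times the geometric constant $(|\mathbb T^3|/3)^{1/(2p)}\leq 11^{1/p}$. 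For $i\geq 1$, Chebyshev applied to the inclusion $\mathrm{supp}\,\chi_{(i)}\subset\{|\mathring R_\ell|\geq\tfrac{3}{4}\cdot 4^{i-1}(\sigma^2r)^{(q-2)/q}\delta_{m+1}\lambda_m^{-2\varepsilon}\}$, together with the $L^1$-bound of Lemma \ref{lem02}, gives $|\mathrm{supp}\,\chi_{(i)}|\lesssim 4^{-i}$ and hence $\|\chi_{(i)}\|_{L^p}\lesssim 4^{(1-i)/p}$; multiplying by the constant $\rho_i^{1/2}\sim 2^i(\sigma^2r)^{(q-2)/(2q)}\delta_{m+1}^{1/2}\lambda_m^{-\varepsilon}$ produces the claimed $2^{i(p-2)/p}\delta_{m+1}^{1/2}\lambda_m^{-\varepsilon}$.

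For the pointwise $C^k_{x,t}$ bound, my plan is to apply the multivariate Fa\`a di Bruno formula to the two compositions $\gamma_{(\zeta)}(\mathrm{Id}-\mathring R_\ell/\rho_i)$ and $\tilde\chi_i(\mathring R_\ell/[(\sigma^2r)^{(q-2)/q}\delta_{m+1}\lambda_m^{-2\varepsilon}])$ that make up $a_{(\zeta)}$. Differentiating the mollified Reynolds stress and using Lemma \ref{lem02} yields $\|\nabla^j_{x,t}\mathring R_\ell\|_{L^\infty}\lesssim\ell^{-4-j}(\sigma^2r)^{(q-2)/q}\delta_{m+1}\lambda_m^{-2\varepsilon}$, so after dividing by the normalizing denominator $\rho_i$ (or $(\sigma^2r)^{(q-2)/q}\delta_{m+1}\lambda_m^{-2\varepsilon}$ for $\tilde\chi_i$), the small parameter cancels at the cost of a single factor of $4^i\leq 4^{i_{\max}}$. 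The crucial arithmetic input is that \eqref{Z90} together with $\ell^{-1}=\lambda_{m+1}^{(q-2)(1+\varepsilon_\ast)+(5+\beta(b-1))/b}$ gives $4^{i_{\max}}\lesssim\ell^{-4}$; combined with the base $\ell^{-4}$ mollification loss this accounts for the exponent $\ell^{-k-8}$ in the claim. For $i=0$ one additionally invokes \eqref{D009} for the time-derivative contributions from $\rho_0^{1/2}$, which fit trivially inside the same bound. The main obstacle is the careful bookkeeping in the Fa\`a di Bruno expansion: a priori each iterated chain-rule term could multiply by an additional factor of $4^{i_{\max}}$, so one must verify that the number of such factors produced at order $k$ is bounded by an absolute constant (at most two — one from $\tilde\chi_i$ and one from $\gamma_{(\zeta)}$), independently of $k$; this is routine once one observes that every higher derivative of $\mathring R_\ell/\rho_i$ still involves only a single normalization factor.
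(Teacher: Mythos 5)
Your two $L^{p}$ estimates are fine. For $i\geq1$ you follow the same Chebyshev route as the paper. For $i=0$ you take a genuinely different path: instead of bounding $|\mathrm{supp}\,\chi_{(0)}|$ by Chebyshev (which is how the paper treats all $i$ in a single display), you use $\chi_{(0)}^{p}\leq\chi_{(0)}$, Cauchy--Schwarz, and the identity $3\rho\int_{\mathbb{T}^{3}}\chi_{(0)}^{2}dx=\max\{\tilde{e}-\tfrac12(\sigma^{2}r)^{\frac{q-2}{q}}\delta_{m+2},0\}\leq(\sigma^{2}r)^{\frac{q-2}{q}}\delta_{m+1}$ together with \eqref{Q08} and \eqref{W902}. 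This is arguably cleaner, since $\mathrm{supp}\,\chi_{(0)}$ is the set where $\mathring{R}_{\ell}$ is \emph{small} and is not a Chebyshev-type small set; your constant $\big(|\mathbb{T}^{3}|(\tfrac13+\tfrac1{100})\big)^{1/(2p)}\leq 11^{1/p}$ checks out.

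The $C^{k}_{x,t}$ estimate is where the proposal has a genuine gap. You correctly name Fa\`a di Bruno and call the bookkeeping "the main obstacle", but you misdiagnose what must be bookkept. The powers of $4^{i}$ are harmless: dividing $\mathring{R}_{\ell}$ by $\rho_{i}$ produces $4^{-i}$, each derivative of $\tilde{\chi}_{i}$ produces a further $4^{-i}$, and the prefactor $\rho_{i}^{1/2}$ contributes only $2^{i}$, so every $4^{i}$ enters with a favorable sign and no factor $4^{i_{\max}}$ is ever needed. The real danger is the accumulation of Sobolev-embedding losses: the only $L^{\infty}$ control on $u:=\mathring{R}_{\ell}/\rho_{i}$ available from Lemma \ref{lem02} is $\|\nabla^{j}_{x,t}u\|_{L^{\infty}}\lesssim\|\nabla^{j}_{x,t}u\|_{W^{4,1}}\lesssim 4^{-i}\ell^{-j-4}$, so a Fa\`a di Bruno term containing $n$ derivative factors $\prod_{s}\nabla^{j_{s}}_{x,t}u$ with $\sum_{s}j_{s}=k$ is bounded only by $4^{-in}\ell^{-k-4n}$. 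The worst term ($n=k$, all $j_{s}=1$) gives $\ell^{-5k}$ when $i=0$, which for $k\geq2$ is far worse than the required $\ell^{-k-4}$ per composition (hence $\ell^{-k-8}$ after the Leibniz product of the two compositions). Counting "normalization factors" cannot repair this: the number of $\ell^{-4}$ losses in a term is $n$, which can equal $k$, not at most two. The missing idea --- and the reason the paper invokes Proposition 4.1 of \cite{DS2014} rather than the raw chain rule --- is the interpolation $[u]_{j}\lesssim\|u\|_{C^{0}}^{1-j/k}[u]_{k}^{j/k}$, which converts $\prod_{s}[u]_{j_{s}}\lesssim\|u\|_{C^{0}}^{n-1}[u]_{k}\lesssim[u]_{k}\lesssim 4^{-i}\ell^{-k-4}$, using crucially that $\|u\|_{C^{0}}\lesssim1$ on $\mathrm{supp}\,\chi_{(i)}$ (by \eqref{K01} for $i\geq1$ and \eqref{E86} for $i=0$). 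Without this step your plan does not reach $\ell^{-k-8}$.
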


\begin{proof}
To begin with, we deduce from Lemma \ref{lem02}, \eqref{Q08}--\eqref{K01} and the Chebyshev's inequality that for any $0\leq i\leq i_{\max}$,
\begin{align}\label{DF011}
	|\mathrm{supp}\chi_{(i)}|\leq&|\{x:|\mathring{R}_{\ell}|\geq 3\cdot4^{i-2}\vartheta_{m+1}^{\frac{q-2}{q}}\delta_{m+1}\lambda_{m}^{-2\varepsilon}\}|\notag\\
	\leq&\frac{\|\mathring{R}_{\ell}\|_{L^{1}}}{3\cdot4^{i-2}\vartheta_{m+1}^{\frac{q-2}{q}}\delta_{m+1}\lambda_{m}^{-2\varepsilon}}\leq\frac{11}{4^{i}}.
\end{align}		
In view of \eqref{ZQ29}, we have from \eqref{Q08} and \eqref{DF011} that if $i=0$, 
\begin{align*}
 \|a_{(\zeta)}\|_{L^{p}}\leq&(c_{q}c_{q}^{\ast})^{-1/2}\delta_{m+1}^{1/2}\|\gamma_{(\zeta)}\|_{L^{\infty}}|\mathrm{supp}\chi_{(0)}|^{1/p}\notag\\
 \leq&\sqrt[p]{11}(c_{q}c_{q}^{\ast})^{-1/2}\|\gamma_{(\zeta)}\|_{L^{\infty}}\delta_{m+1}^{1/2},
\end{align*}
and if $i\geq1$, 
\begin{align*}
\|a_{(\zeta)}\|_{L^{p}}\lesssim2^{i+1}\delta_{m+1}^{1/2}\lambda_{m}^{-\varepsilon}|\mathrm{supp}\chi_{(i)}|^{1/p}\lesssim2^{\frac{(p-2)i}{p}}\delta_{m+1}^{1/2}\lambda_{m}^{-\varepsilon}.
\end{align*}	
Utilizing Proposition 4.1 in \cite{DS2014}, Sobolev embedding theorem, \eqref{Q08}--\eqref{K01}, \eqref{E81}--\eqref{ZQ29} and Lemma \ref{lem02}, we have
\begin{align*}
\|\nabla^{k}_{x,t}\chi_{(i)}\|_{L^{\infty}(\mathrm{supp}\chi_{(i)})}\lesssim&\|\nabla^{k}_{x,t}(\mathring{R}_{\ell}/\rho_{i})\|_{L^{\infty}(\mathrm{supp}\chi_{(i)})}\sum_{j\leq k}\|\mathring{R}_{\ell}/\rho_{i}\|_{L^{\infty}(\mathrm{supp}\chi_{(i)})}^{j-1}\notag\\
\lesssim&\|\nabla_{x,t}^{k}(\mathring{R}_{\ell}/\rho_{i})\|_{C_{t}W^{4,1}}\lesssim 4^{-i}\ell^{-k-4},
\end{align*}
and
\begin{align*}
\|\nabla_{x,t}^{k}\gamma_{(\zeta)}\|_{L^{\infty}(\mathrm{supp}\chi_{(i)})}\lesssim&\|\nabla_{x,t}^{k}(\mathring{R}_{\ell}/\rho_{i})\|_{L^{\infty}(\mathrm{supp}\chi_{(i)})}\sum_{j\leq k}\|\mathrm{Id}-\mathring{R}_{\ell}/\rho_{i}\|_{L^{\infty}(\mathrm{supp}\chi_{(i)})}^{j-1}\notag\\
\lesssim&\|\nabla_{x,t}^{k}(\mathring{R}_{\ell}/\rho_{i})\|_{C_{t}W^{4,1}}\lesssim 4^{-i}\ell^{-k-4}.
\end{align*}
It then follows that for $i\geq0$,
\begin{align*}
|\nabla_{x,t}^{k} a_{(\zeta)}|\lesssim&2^{i+1}\delta_{m+1}^{1/2}\sum_{0\leq j\leq k}|\nabla^{j}_{x,t}\chi_{(i)}|\bigg|\nabla^{k-j}_{x,t}\gamma_{(\zeta)}\Big(\mathrm{Id}-\frac{\mathring{R}_{\ell}}{\rho_{i}}\Big)\bigg|\lesssim\delta_{m+1}^{1/2}\ell^{-k-8}.
\end{align*}

\end{proof}

Define the new velocity field $u_{m+1}$ as
\begin{align*}
u_{m+1}:=w_{m+1}+u_{\ell}.	
\end{align*}
For simplicity, denote
\begin{align}\label{C0015}
	\kappa_{\ast}:=6N_{0}\sqrt[p]{11}(c_{q}c_{q}^{\ast})^{-1/2}\max_{\zeta\in\Lambda_{(0)}}\|\gamma_{(\zeta)}\|_{L^{\infty}},
\end{align}	
where $N_{0}$ is the number of elements in $\Lambda_{(0)}$. 
\begin{prop}\label{prop06}
There exists a constant $2<q\ll3$ such that
\begin{align*}
\|w_{m+1}\|_{L^{q}}\leq \kappa_{\ast}\delta_{m+1}^{1/2},\quad	\|u_{m+1}-u_{m}\|_{L^{q}}
\leq&\kappa_{\ast}\delta_{m+1}^{1/2},
\end{align*}
and, for $k\geq1$, 
\begin{align*}
\|w_{m+1}\|_{W^{k,q}} \leq\frac{1}{2}\delta_{m+1}^{1/2}\lambda_{m+1}^{k+\varepsilon},\quad\|w_{m+1}\|_{C_{x,t}^{1}}\leq\frac{1}{2}\lambda_{m+1}^{4},
\end{align*}	
where $\kappa_{\ast}$ is given by \eqref{C0015}.
\end{prop}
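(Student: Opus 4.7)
The plan is to estimate the three pieces $w_{m+1}^{(p)}$, $w_{m+1}^{(c)}$, $w_{m+1}^{(t)}$ of the perturbation separately in each of the three norms, arranging matters so that $w_{m+1}^{(p)}$ with $i=0$ accounts exactly for the constant $\kappa_*\delta_{m+1}^{1/2}$ in the $L^q$ bound and every other contribution is strictly lower order thanks to the parameter choices in \eqref{AAA01}--\eqref{ZQ009}.

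For the $L^q$ bound on the principal part $w_{m+1}^{(p)}$, I would first exploit the mutually disjoint supports of the $\{W_{(\zeta)}\}_\zeta$ (engineered via the translations $\alpha_\zeta$) to split the sum shell-by-shell, and then apply an improved Hölder inequality of the type used in \cite{BV2019,BCV2022} that exploits the $(\mathbb{T}/(N_\Lambda\lambda_{m+1}\sigma))^3$-periodicity of $W_{(\zeta)}$ together with the Lipschitz control $\|\nabla a_{(\zeta)}\|_{L^\infty}\lesssim\delta_{m+1}^{1/2}\ell^{-9}$ from Lemma \ref{lem06}; since the spatial period $1/(\lambda_{m+1}\sigma)$ is much smaller than $\ell$, the product estimate upgrades to
\[
\|a_{(\zeta)}W_{(\zeta)}\|_{L^q}\lesssim \|a_{(\zeta)}\|_{L^q}+o_{\lambda_{m+1}}(1),
\]
using $\int|W_{(\zeta)}|^q\,dx=1$. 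Plugging in the coefficient bounds from Lemma \ref{lem06} produces exactly $N_0\sqrt[q]{11}(c_q c_q^*)^{-1/2}\max_\zeta\|\gamma_{(\zeta)}\|_{L^\infty}\delta_{m+1}^{1/2}$ from the $i=0$ shell, while the shells $i\geq 1$ contribute in total $\lesssim 2^{(q-2)i_{\max}/q}\delta_{m+1}^{1/2}\lambda_m^{-\varepsilon}=\lambda_{m+1}^{O(q-2)}\delta_{m+1}^{1/2}\lambda_m^{-\varepsilon}$, which is absorbed into the factor-of-$6$ slack in $\kappa_*$ once $q$ is sufficiently close to $2$ and $a$ is large.

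The corrector estimates follow almost for free: $w_{m+1}^{(c)}$ picks up the crucial prefactor $\lambda_{m+1}^{-2}$ from $V_{(\zeta)}$ in \eqref{M01}, and at most two derivatives on $V_{(\zeta)}$ cost $\lambda_{m+1}^2$, so each term is bounded by $\delta_{m+1}^{1/2}\ell^{-O(1)}\lambda_{m+1}^{-1}$, which is negligible by \eqref{ZQ007}; for $w_{m+1}^{(t)}$, the $L^q$-boundedness of $\mathbb{P}_H\mathbb{P}_{\neq 0}$ (valid for $1<q<\infty$) together with the separated-variable structure of $\phi_{(\zeta)}\psi_{(\zeta)}$ and the bounds in \eqref{M01} yield a prefactor $\mu^{-1}(\sigma^2 r)^{(q-2)/q}\sigma^{-2/q}r^{-1/q}$, which the choice $\mu=\lambda_{m+1}^{1+2\varepsilon_*}$ renders $o(\delta_{m+1}^{1/2})$. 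The bound on $\|u_{m+1}-u_m\|_{L^q}$ then follows by adding $\|u_\ell-u_m\|_{L^q}$ from Lemma \ref{lem02}. For $W^{k,q}$, each spatial derivative on $W_{(\zeta)}$ costs $\lambda_{m+1}$ (whereas $\ell^{-1}\ll\lambda_{m+1}$ since $\beta,b$ are chosen appropriately), so derivatives fall preferentially on the building blocks and one obtains $\|w_{m+1}\|_{W^{k,q}}\lesssim\delta_{m+1}^{1/2}\lambda_{m+1}^k$, which fits under $\tfrac12\delta_{m+1}^{1/2}\lambda_{m+1}^{k+\varepsilon}$ once $a\gg 1$. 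For the $C^1_{x,t}$ bound, $\partial_t W_{(\zeta)}$ costs $\lambda_{m+1}\sigma\mu/r$, which by \eqref{ZQ009} is at most $\lambda_{m+1}^{3+O(\varepsilon_*)}\ll\lambda_{m+1}^4$.

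The main obstacle is the bookkeeping of all the parameters $\sigma,r,\mu,\lambda_{m+1},\ell,\beta,\varepsilon_*,\varepsilon,q$ simultaneously: every estimate must close with a strictly positive exponent of $\lambda_{m+1}^{-1}$ as slack (for the correctors and the $\mu^{-1}$ term) or $\lambda_m^{-\varepsilon}$ (for the shells $i\geq 1$), and this is precisely where the constraint $3-q-(q+2)\varepsilon_*>0$ from \eqref{AZ90} and the relation $\varepsilon_*=(3-q)/A$ are used. A secondary obstacle is verifying the improved Hölder inequality in the precise form needed; this reduces to a direct check that the spatial period of $W_{(\zeta)}$ is $\ll\ell$, which follows from the concrete choices in \eqref{ZQ009} and \eqref{ZQ007}.
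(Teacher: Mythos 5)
Your proposal follows essentially the same route as the paper: decompose $w_{m+1}=w_{m+1}^{(p)}+w_{m+1}^{(c)}+w_{m+1}^{(t)}$, control the principal part via the improved H\"older inequality (Lemma \ref{lem26}) with the $i=0$ shell supplying the constant $\kappa_{\ast}/6$ and the $i\geq1$ shells and the $(\lambda_{m+1}\sigma)^{-1/q}\|a_{(\zeta)}\|_{C^{1}}$ correction absorbed under the constraints \eqref{I01}--\eqref{I03}, treat the correctors as lower order through the $\lambda_{m+1}^{-2}$ and $\mu^{-1}(\sigma^{2}r)^{-1/q}=\lambda_{m+1}^{-\varepsilon_{\ast}}$ prefactors, and count $\lambda_{m+1}$ per spatial derivative and $\lambda_{m+1}\sigma\mu/r\lesssim\lambda_{m+1}^{3+\varepsilon_{\ast}}$ for the time derivative. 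Apart from minor bookkeeping imprecision in the exponent of $\sigma^{2}r$ for the $w_{m+1}^{(t)}$ term (which does not affect the conclusion), this matches the paper's argument.
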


Before giving the proof of Proposition \ref{prop06}, we first state the improved H\"{o}lder's inequality established in \cite{MS2018}, which was inspired by Lemma 3.7 in \cite{BV2019}. 
\begin{lemma}\label{lem26}
Set $p\in[1,\infty]$ and $n\geq1$. Let $f,g:\mathbb{T}^{n}\rightarrow\mathbb{R}$ be two smooth functions. Then for any $\lambda\in\mathbb{N}$,
\begin{align*}
\|f(\lambda\cdot)g\|_{L^{p}}\leq \|f\|_{L^{p}}\|g\|_{L^{p}}+C_{p}\lambda^{-1/p}\|f\|_{L^{p}}\|g\|_{C^{1}}.	
\end{align*}	
\end{lemma}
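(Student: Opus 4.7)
The case $p=\infty$ is trivial since $\|f(\lambda\cdot)g\|_{L^\infty}\le\|f\|_{L^\infty}\|g\|_{L^\infty}$ and the second term on the right is nonnegative, so assume $p\in[1,\infty)$. The plan is to localize on a grid of mesh size $\lambda^{-1}$, use periodicity of $f(\lambda\cdot)$ to turn each local integral into the global $L^p$-mean of $f$, and then control the resulting Riemann sum of $|g|^p$ by the integral $\|g\|_{L^p}^p$ up to a discretization error. The crucial quantitative input is that $p$-th powers are Lipschitz on bounded sets with constant controlled by $\|g\|_{C^1}$, which is exactly what ultimately produces the $\lambda^{-1/p}$ gain after one extracts a $p$-th root.

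First I would partition $\mathbb{T}^{n}$ into $\lambda^{n}$ disjoint closed cubes $\{Q_{k}\}$ of side length $1/\lambda$, letting $x_{k}$ denote the center of $Q_{k}$. Because $\lambda\in\mathbb{N}$, the dilation $\lambda Q_{k}$ is precisely a fundamental period cube for $f$, so by periodicity
\[
\int_{Q_{k}}|f(\lambda x)|^{p}\,dx=\lambda^{-n}\int_{\mathbb{T}^{n}}|f(y)|^{p}\,dy=\lambda^{-n}\|f\|_{L^{p}}^{p}.
\]
On each $Q_{k}$, the mean value theorem applied to the function $t\mapsto|t|^{p}$ together with $|g(x)-g(x_{k})|\le\lambda^{-1}\|g\|_{C^{1}}$ and $|g|\le\|g\|_{C^{1}}$ yields
\[
\bigl||g(x)|^{p}-|g(x_{k})|^{p}\bigr|\le C_{p}\lambda^{-1}\|g\|_{C^{1}}^{p},\qquad x\in Q_{k}.
\]
Multiplying by $|f(\lambda x)|^{p}$ and integrating over $Q_{k}$, then summing over $k$, gives
\[
\|f(\lambda\cdot)g\|_{L^{p}}^{p}\le\|f\|_{L^{p}}^{p}\lambda^{-n}\sum_{k}|g(x_{k})|^{p}+C_{p}\lambda^{-1}\|f\|_{L^{p}}^{p}\|g\|_{C^{1}}^{p}.
\]

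Next I would recognize $\lambda^{-n}\sum_{k}|g(x_{k})|^{p}$ as a midpoint Riemann sum for $\int_{\mathbb{T}^{n}}|g|^{p}\,dx$. Since $\nabla(|g|^{p})$ is bounded in $L^{\infty}$ by $p\|g\|_{\infty}^{p-1}\|\nabla g\|_{\infty}\le p\|g\|_{C^{1}}^{p}$, the standard Riemann-sum error estimate on each cube of side $1/\lambda$ provides
\[
\lambda^{-n}\sum_{k}|g(x_{k})|^{p}\le\|g\|_{L^{p}}^{p}+C_{p}\lambda^{-1}\|g\|_{C^{1}}^{p}.
\]
Substituting this back and absorbing constants yields
\[
\|f(\lambda\cdot)g\|_{L^{p}}^{p}\le\|f\|_{L^{p}}^{p}\|g\|_{L^{p}}^{p}+\tilde{C}_{p}\lambda^{-1}\|f\|_{L^{p}}^{p}\|g\|_{C^{1}}^{p}.
\]
Finally, taking the $p$-th root and using the elementary inequality $(A+B)^{1/p}\le A^{1/p}+B^{1/p}$ for $A,B\ge 0$ delivers the claimed bound with $C_{p}=\tilde{C}_{p}^{1/p}$.

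The only place requiring any care is the interplay between the two approximations: replacing $|g(x)|^{p}$ by $|g(x_{k})|^{p}$ inside the integral, and replacing the sum $\lambda^{-n}\sum_{k}|g(x_{k})|^{p}$ by $\|g\|_{L^{p}}^{p}$. Both contribute errors of order $\lambda^{-1}\|g\|_{C^{1}}^{p}$, which is consistent because it is the $p$-th root that produces the advertised $\lambda^{-1/p}$ factor; confusing these two gains (writing $\lambda^{-1}$ instead of $\lambda^{-1/p}$, or vice versa) is the main pitfall to avoid.
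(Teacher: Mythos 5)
Your proposal is correct: the paper does not prove this lemma but simply cites Modena--Sz\'ekelyhidi \cite{MS2018}, and your argument (tiling $\mathbb{T}^n$ into $\lambda^{-1}$-cubes, using periodicity of $f(\lambda\cdot)$ on each cube, the Lipschitz bound on $t\mapsto|t|^p$ to freeze $|g|^p$ at the cube centers, the midpoint Riemann-sum error for $\int|g|^p$, and finally subadditivity of the $p$-th root) is exactly the standard proof of that cited result. The only cosmetic caveats are that your constants also depend on $n$ (harmless, since the statement is applied with $n=3$ fixed) and that the cube side should be (period)$/\lambda$ under the paper's $2\pi$-periodic convention, which changes nothing in the argument.
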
	

\begin{proof}[Proof of Proposition \ref{prop06}]
For the purpose of letting
\begin{align}\label{M012}
 \ell^{-10}(\lambda_{m+1}\sigma)^{-1/q}=\lambda_{m+1}^{10[(q-2)(1+\varepsilon_{\ast})+\frac{5+\beta(b-1)}{b}]-\frac{3-q-(q+2)\varepsilon_{\ast}}{3q}}\ll1,
\end{align}
we need $10[(q-2)(1+\varepsilon_{\ast})+\frac{5+\beta(b-1)}{b}]-\frac{3-q-(q+2)\varepsilon_{\ast}}{3q}<0$. By taking $b> \frac{60q[5+\beta(b-1)]}{3-q-(q+2)\varepsilon_{\ast}}$, we reduce it to $10[(q-2)(1+\varepsilon_{\ast})+\frac{5+\beta(b-1)}{b}]\leq\frac{3-q-(q+2)\varepsilon_{\ast}}{6q}$. Remark that using \eqref{AAA01}, we have  $3-q-(q+2)\varepsilon_{\ast}>0$. So it suffices to require that
\begin{align}\label{I01}
	2<q\leq2+\frac{-\tau_{1}+\sqrt{\tau_{1}^{2}+240(1+\varepsilon_{\ast})\tau_{2}}}{120(1+\varepsilon_{\ast})},\quad b> \frac{60q(5+\beta(b-1))}{3-q-(q+2)\varepsilon_{\ast}},
\end{align}
where 
\begin{align*}
\tau_{1}:=121(1+\varepsilon_{\ast})+\frac{60(5+\beta(b-1))}{b},\quad\tau_{2}:=1-\frac{120(5+\beta(b-1))}{b}.	
\end{align*}		
Making use of \eqref{Z90} and the fact that $a\gg1$, we have $i_{\max}\ell\ll1$. Moreover, in order to make
\begin{align}\label{M013}
	2^{\frac{(q-2)i_{\max}}{q}}\lambda_{m}^{-\varepsilon}\lesssim\lambda_{m+1}^{\frac{2(q-2)}{q}[(q-2)(1+\varepsilon_{\ast})+\frac{5+\beta(b-1)}{b}]-\frac{\varepsilon}{b}}\ll1,
\end{align}	 
we further assume that
\begin{align}\label{I02}
2<q<2+\frac{-\tau_{3}+\sqrt{\tau_{3}^{2}+16b\varepsilon(1+\varepsilon_{\ast})}}{4b(1+\varepsilon_{\ast})},
\end{align}
where $\tau_{3}:=10-\varepsilon+2\beta(b-1)$. Then a consequence of \eqref{M012}--\eqref{I02}, Lemmas \ref{lem06} and \ref{lem26} shows that if $a\gg1$,
\begin{align}\label{W99}
\|w_{m+1}^{(p)}\|_{L^{q}}\leq& \sum\limits_{i\geq0}\sum\limits_{\zeta\in\Lambda_{(i)}}\big(\|a_{(\zeta)}\|_{L^{q}}\|W_{(\zeta)}\|_{L^{q}}+C_{q}(\lambda_{m+1}\sigma)^{-1/q}\|a_{(\zeta)}\|_{C^{1}}\|W_{(\zeta)}\|_{L^{q}}\big)\notag\\
\leq&\frac{\kappa_{\ast}}{6}\delta_{m+1}^{1/2}+C\delta_{m+1}^{1/2}\lambda_{m}^{-\varepsilon}\sum^{i_{\max}}_{i=1}2^{\frac{(q-2)i}{q}}+C\delta_{m+1}^{1/2}\ell^{-10}(\lambda_{m+1}\sigma)^{-1/q}\notag\\
\leq&\kappa_{\ast}\delta_{m+1}^{1/2}\Big(\frac{1}{3}+2^{\frac{(q-2)i_{\max}}{q}}\lambda_{m}^{-\varepsilon}\Big)\leq \frac{2}{3}\kappa_{\ast}\delta_{m+1}^{1/2}.
\end{align}	
Using \eqref{M01}, Lemma \ref{lem06} and the fact of $i_{\max}\ell\ll1$,  we deduce that for $k\geq1$,
\begin{align}\label{D90}
\|w_{m+1}^{(p)}\|_{W^{k,q}}\lesssim	&\sum\limits_{i\geq0}\sum\limits_{\zeta\in\Lambda_{(i)}}\sum_{j=0}^{k}\|a_{(\zeta)}\|_{C^{k-j}}\|W_{(\zeta)}\|_{W^{j,q}}\notag\\
\lesssim&\sum\limits_{i=0}^{i_{\max}}\sum\limits_{\zeta\in\Lambda_{(i)}}\sum_{j=0}^{k}\delta_{m+1}^{1/2}\ell^{-k+j-8}\lambda_{m+1}^{j}\lesssim \ell^{-9}\delta_{m+1}^{1/2}\lambda_{m+1}^{k}.
\end{align}
Similarly, for $k\geq0$,
\begin{align*}
&\big\|\mathrm{curl}(\nabla a_{(\zeta)}\times V_{(\zeta)})+\frac{1}{(N_{\Lambda}\lambda_{m+1})^{2}}\nabla(a_{(\zeta)}V_{(\zeta)})\times \mathrm{curl}(\Phi_{(\zeta)}\zeta)\big\|_{W^{k,q}}\notag\\
&\lesssim\sum_{j=0}^{k+1}\|a_{(\zeta)}\|_{C^{k+2-j}}\|V_{(\zeta)}\|_{W^{j,q}}\notag\\
&\quad+\frac{1}{\lambda_{m+1}^{2}}\sum_{j=0}^{k}\sum^{k-j+1}_{j'=0}\|a_{(\zeta)}\|_{C^{k-j+1-j'}}\|\psi_{(\zeta)}\|_{W^{j',q}}\|\Phi_{(\zeta)}\|_{W^{j+1,q}}\notag\\
&\lesssim\sum_{j=0}^{k+1}\delta_{m+1}^{1/2}\ell^{-k-2+j-8}\lambda_{m+1}^{j-2}\notag\\
&\quad+\sum_{j=0}^{k}\sum^{k-j+1}_{j'=0}\delta_{m+1}^{1/2}\ell^{-k+j+j'-9}\Big(\frac{\lambda_{m+1}\sigma}{r}\Big)^{j'}\lambda_{m+1}^{j-1}\lesssim\ell^{-8}\delta_{m+1}^{1/2}\lambda_{m+1}^{k-1},
\end{align*}	
and
\begin{align*}
&\big\|\mu^{-1}\mathbb{P}_{H}\mathbb{P}_{\neq0}\big(a_{(\zeta)}^{2}\phi_{(\zeta)}^{2}\psi_{(\zeta)}^{2}\zeta\big)\big\|_{W^{k,q}}\notag\\
&\lesssim\mu^{-1}\sum^{k}_{j=0}\sum^{j}_{j'=0}\big\|a^{2}_{(\zeta)}\big\|_{C^{k-j}}\big\|\phi^{2}_{(\zeta)}\big\|_{W^{j',q}}\big\|\psi^{2}_{(\zeta)}\big\|_{W^{j-j',q}}\notag\\
&\lesssim\mu^{-1}\sum^{k}_{j=0}\sum^{j}_{j'=0}\delta_{m+1}\ell^{-k+j-16}\lambda_{m+1}^{j'}\vartheta_{m+1}^{-1/q}\Big(\frac{\lambda_{m+1}\sigma}{r}\Big)^{j-j'}\notag\\
&\lesssim\ell^{-16}\mu^{-1}\delta_{m+1}\vartheta_{m+1}^{-1/q}\lambda_{m+1}^{k}.
\end{align*}	
Observe that if 
\begin{align}\label{I03}
2<q\leq2+\frac{\varepsilon}{34(1+\varepsilon_{\ast})}\;\text{ and }\;b>\frac{34(5+\beta(b-1))}{\varepsilon},
\end{align}
then $\ell^{-17}\lambda_{m+1}^{-\varepsilon_{\ast}}\ll\ell^{-17}\lambda_{m+1}^{-\varepsilon}\ll1$. Therefore, we obtain
\begin{align*}
\|w_{m+1}^{(c)}\|_{L^{q}}+\|w_{m+1}^{(t)}\|_{L^{q}}\lesssim&\ell^{-9}\delta_{m+1}^{1/2}\lambda_{m+1}^{-1}+\ell^{-17}\mu^{-1}\delta_{m+1}\vartheta_{m+1}^{-1/q}\notag\\
\leq&C\delta_{m+1}^{1/2}(\ell^{-9}\lambda_{m+1}^{-1}+\ell^{-17}\lambda_{m+1}^{-\varepsilon_{\ast}-\beta})\leq\frac{\kappa_{\ast}}{6}\delta_{m+1}^{1/2},
\end{align*}	
and, for $k\geq1$,
\begin{align*}
\|w_{m+1}^{(c)}\|_{W^{k,q}}+\|w_{m+1}^{(t)}\|_{W^{k,q}}\lesssim&\ell^{-9}\delta_{m+1}^{1/2}\lambda_{m+1}^{k-1}+\ell^{-17}\mu^{-1}\delta_{m+1}\vartheta_{m+1}^{-1/q}\lambda_{m+1}^{k}\notag\\
\lesssim&\ell^{-17}\delta_{m+1}^{1/2}\lambda_{m+1}^{k-\varepsilon_{\ast}-\beta}\leq\delta_{m+1}^{1/2}\lambda_{m+1}^{k},
\end{align*}
where we used the fact of $\mu^{-1}\vartheta_{m+1}^{-1/q}=\lambda_{m+1}^{-\varepsilon_{\ast}}$ in virtue of \eqref{QZ003} and \eqref{ZQ009}. These facts, together with \eqref{W99}--\eqref{I03} and Lemma \ref{lem02}, read that
\begin{align*}
	\|u_{m+1}-u_{m}\|_{L^{q}}\leq& \|u_{\ell}-u_{m}\|_{L^{q}}+\|w_{m+1}\|_{L^{q}}\notag\\
	\leq&C\delta_{m+1}^{1/2}\lambda_{m}^{-4+\varepsilon}\vartheta_{m+1}^{\frac{q-2}{q}}+\frac{5\kappa_{\ast}}{6}\delta_{m+1}^{1/2}\leq \kappa_{\ast}\delta_{m+1}^{1/2},
\end{align*}	
and
\begin{align*}
\|w_{m+1}\|_{W^{k,q}}\leq&C\ell^{-9}\delta_{m+1}^{1/2}\lambda_{m+1}^{k}\leq\frac{1}{2}\delta_{m+1}^{1/2}\lambda_{m+1}^{k+\varepsilon}.
\end{align*}

By the same arguments as before, it follows from a straightforward calculation that
\begin{align*}
\|w_{m+1}^{(p)}\|_{C_{x,t}^{1}}\lesssim	&\sum\limits_{i\geq0}\sum\limits_{\zeta\in\Lambda_{(i)}}\|a_{(\zeta)}\|_{C^{1}_{x,t}}\|W_{(\zeta)}\|_{C^{1}_{x,t}}\notag\\
\lesssim&\sum\limits_{i=0}^{i_{\max}}\sum\limits_{\zeta\in\Lambda_{(i)}}\delta_{m+1}^{1/2}\ell^{-9}\vartheta_{m+1}^{-1/q}\frac{\lambda_{m+1}\sigma\mu}{r}\lesssim \ell^{-10}\delta_{m+1}^{1/2}\lambda_{m+1}^{3+\varepsilon_{\ast}}\leq\frac{1}{6}\lambda_{m+1}^{4},
\end{align*}
and
\begin{align*}
\|w_{m+1}^{(c)}\|_{C_{x,t}^{1}}\lesssim&i_{\max}\|a_{(\zeta)}\|_{C_{x,t}^{2}}\|\nabla V_{(\zeta)}\|_{C_{x,t}^{1}}\notag\\
&+\frac{i_{\max}}{\lambda_{m+1}^{2}}\|a_{(\zeta)}\|_{C_{x,t}^{2}}\big(\|\nabla V_{(\zeta)}\|_{C_{x,t}^{1}}\|\nabla\Phi\|_{C^{0}_{x,t}}+\|\nabla V_{(\zeta)}\|_{C_{x,t}^{0}}\|\nabla\Phi\|_{C^{1}_{x,t}}\big)\notag\\
\lesssim&\ell^{-11}\delta_{m+1}^{1/2}\vartheta_{m+1}^{-1/q}\frac{\sigma\mu}{r}\lesssim\ell^{-11}\delta_{m+1}^{1/2}\lambda_{m+1}^{2+\varepsilon_{\ast}}\leq\frac{1}{6}\lambda_{m+1}^{4},
\end{align*}
and
\begin{align*}
\|w_{m+1}^{(t)}\|_{C_{x,t}^{1}}\lesssim&i_{\max}\mu^{-1}\|a_{(\zeta)}\|_{C_{x,t}^{1}}\|a_{(\zeta)}\|_{C_{x,t}^{0}}\|\phi_{(\zeta)}\psi_{(\zeta)}\|_{C_{x,t}^{1}}\|\phi_{(\zeta)}\psi_{(\zeta)}\|_{C_{x,t}^{0}}\notag\\
\lesssim&\ell^{-18}\delta_{m+1}\vartheta_{m+1}^{-2/q}\frac{\lambda_{m+1}\sigma}{r}\lesssim\ell^{-18}\delta_{m+1}\lambda_{m+1}^{3-\varepsilon_{\ast}}\leq\frac{1}{6}\lambda_{m+1}^{4}.
\end{align*}		
Consequently, $\|w_{m+1}\|_{C_{x,t}^{1}}\leq\frac{1}{2}\lambda_{m+1}^{4}$.

\end{proof}

A direct application of Lemma \ref{lem02} and Proposition \ref{prop06} gives that the results in \eqref{Z09} hold with $m$ replaced by $m+1$. To be specific, 
\begin{corollary}\label{coro03}
There exists a constant $2<q\ll3$ such that
\begin{align*}
	\|u_{m+1}\|_{C_{t}L^{q}}\leq2\delta_{0}^{1/2}-\delta_{m+1}^{1/2},\quad\|\nabla u_{m+1}\|_{C_{t}L^{q}}\leq\lambda_{m+1}^{1+\varepsilon}\delta_{m+1}^{1/2},\quad\|u_{m+1}\|_{C^{1}_{x,t}}\leq\lambda_{m+1}^{4}.
\end{align*}		
	
\end{corollary}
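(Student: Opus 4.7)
The plan is a straightforward triangle-inequality argument combining the mollification estimates of Lemma \ref{lem02} with the bounds on the perturbation from Proposition \ref{prop06}, followed by absorption of the old iteration-level terms into the new ones via the geometric decay of the parameters $\delta_m$ and the geometric growth of $\lambda_m$. The key observation is that since $\delta_{m+1}/\delta_{m}=\lambda_{m}^{-2\beta(b-1)}$ and $\lambda_{m+1}/\lambda_{m}=\lambda_{m}^{b-1}$, every ``old'' quantity becomes negligible compared to the corresponding new quantity as soon as $a\gg1$, provided $b$ has already been fixed large in the course of Proposition \ref{prop06}.

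For the $C_{t}L^{q}$ bound I would write $u_{m+1}=u_{m}+(u_{m+1}-u_{m})$, apply the triangle inequality, invoke the inductive hypothesis $\|u_{m}\|_{C_{t}L^{q}}\le 2\delta_{0}^{1/2}-\delta_{m}^{1/2}$ from \eqref{Z09}, and use the last estimate of Proposition \ref{prop06}, giving
\begin{align*}
\|u_{m+1}\|_{C_{t}L^{q}}\le 2\delta_{0}^{1/2}-\delta_{m}^{1/2}+\kappa_{\ast}\delta_{m+1}^{1/2}.
\end{align*}
The desired bound $2\delta_{0}^{1/2}-\delta_{m+1}^{1/2}$ then reduces to $(\kappa_{\ast}+1)\delta_{m+1}^{1/2}\le\delta_{m}^{1/2}$, i.e.\ $(\kappa_{\ast}+1)\lambda_{m}^{-\beta(b-1)}\le 1$, which holds for $a\gg1$ since $\kappa_{\ast}$ is a universal constant independent of $a$ (see the remark after Proposition \ref{pro01}).

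For the gradient bound I would split $u_{m+1}=u_{\ell}+w_{m+1}$ and use $\|\nabla u_{\ell}\|_{C_{t}L^{q}}\lesssim\lambda_{m}^{1+\varepsilon}\delta_{m}^{1/2}$ from Lemma \ref{lem02} together with $\|w_{m+1}\|_{W^{1,q}}\le\tfrac{1}{2}\delta_{m+1}^{1/2}\lambda_{m+1}^{1+\varepsilon}$ from Proposition \ref{prop06}. The claim then follows once one checks that
\begin{align*}
C\lambda_{m}^{1+\varepsilon}\delta_{m}^{1/2}+\tfrac{1}{2}\delta_{m+1}^{1/2}\lambda_{m+1}^{1+\varepsilon}\le\delta_{m+1}^{1/2}\lambda_{m+1}^{1+\varepsilon},
\end{align*}
which boils down to $C\lambda_{m}^{-(b-1)(1+\varepsilon)+\beta(b-1)}\le\tfrac{1}{2}$; since $\beta\ll1<1+\varepsilon$ and $b>1$, this holds for $a\gg1$. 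The $C^{1}_{x,t}$ bound is treated identically: $\|u_{\ell}\|_{C^{1}_{x,t}}\le\|u_{m}\|_{C^{1}_{x,t}}\le\lambda_{m}^{4}$ by \eqref{Z09} and a standard mollification estimate, $\|w_{m+1}\|_{C^{1}_{x,t}}\le\tfrac{1}{2}\lambda_{m+1}^{4}$ from Proposition \ref{prop06}, and the sum is absorbed into $\lambda_{m+1}^{4}$ using $\lambda_{m}^{4}=\lambda_{m+1}^{4/b}\ll\lambda_{m+1}^{4}$ for $a\gg1$.

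There is no real obstacle here; the entire corollary is bookkeeping. The only thing to be careful about is that the smallness conditions on $a$ accumulated in this step must be compatible with those already required in Proposition \ref{prop06} (in particular \eqref{I01}--\eqref{I03}), so one should record them as a final ``take $a$ larger if necessary'' statement at the end of the inductive construction rather than as additional constraints on $q$, $b$, $\beta$, or $\varepsilon$.
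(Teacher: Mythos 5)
Your proposal is correct and follows essentially the same route as the paper: triangle inequality combined with Lemma \ref{lem02} and Proposition \ref{prop06}, followed by absorption of the level-$m$ terms into the level-$(m+1)$ bounds using the parameter hierarchy $\delta_{m+1}^{1/2}/\delta_{m}^{1/2}=\lambda_{m}^{-\beta(b-1)}$ and $\lambda_{m}/\lambda_{m+1}=\lambda_{m}^{-(b-1)}$ for $a\gg1$. The only cosmetic difference is that for the $C_{t}L^{q}$ bound you invoke the combined estimate $\|u_{m+1}-u_{m}\|_{L^{q}}\leq\kappa_{\ast}\delta_{m+1}^{1/2}$ instead of keeping $\|u_{\ell}-u_{m}\|_{L^{q}}$ and $\|w_{m+1}\|_{L^{q}}$ as separate terms as the paper does, which changes nothing.
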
	
\begin{proof}
If $q\in(2,3)$ satisfies \eqref{I01}, \eqref{I02} and \eqref{I03}, we deduce from \eqref{Z09}, Lemma \ref{lem02} and Proposition \ref{prop06} that
\begin{align*}
\|u_{m+1}\|_{L^{q}}\leq&\|u_{m}\|_{L^{q}}+\|u_{\ell}-u_{m}\|_{L^{q}}+\|w_{m+1}\|_{L^{q}}\notag\\
\leq&2\delta_{0}^{1/2}-\delta_{m}^{1/2}+C\delta_{m+1}^{1/2}\lambda_{m}^{-4+\varepsilon}\vartheta_{m+1}^{\frac{q-2}{q}}+\kappa_{\ast}\delta_{m+1}^{1/2}\leq2\delta_{0}^{1/2}-\delta_{m+1}^{1/2},\notag\\
\|\nabla u_{m+1}\|_{L^{q}}\leq&\|\nabla w_{m+1}\|_{L^{q}}+\|\nabla u_{\ell}\|_{L^{q}}\leq\frac{1}{2}\delta_{m+1}^{1/2}\lambda_{m+1}^{1+\varepsilon}+C\lambda_{m}^{1+\varepsilon}\delta^{1/2}_{m}\leq\lambda_{m+1}^{1+\varepsilon}\delta_{m+1}^{1/2},\notag\\
\|u_{m+1}\|_{C^{1}_{x,t}}\leq&\|u_{m+1}\|_{C^{1}_{x,t}}+\|u_{\ell}\|_{C^{1}_{x,t}}\leq\|u_{m+1}\|_{C^{1}_{x,t}}+\|u_{m}\|_{C^{1}_{x,t}}\notag\\
\leq&\frac{1}{2}\lambda_{m+1}^{4}+\lambda_{m}^{4}\leq \lambda_{m+1}^{4},
\end{align*}	
where we also utilized Young's convolution inequality.

\end{proof}

\subsection{Estimates on the Reynolds stress}
For the purpose of specifying the new Reynolds stress, we first recall the antidivergence operator $\mathcal{R}$ introduced in \cite{DS2013}.
\begin{definition}
Set $n\geq3$. For any smooth $u\in C^{\infty}(\mathbb{T}^{n},\mathbb{R}^{n})$, define the operator $\mathcal{R}:C^{\infty}(\mathbb{T}^{n},\mathbb{R}^{n})\rightarrow C^{\infty}(\mathbb{T}^{n},\mathcal{S}_{0}^{n\times n})$ by
\begin{align*}
\mathcal{R}u=\frac{n-2}{2n-2}\big(\nabla\mathbb{P}_{H}v+(\nabla\mathbb{P}_{H}v)^{T}\big)+\frac{n}{2n-2}\big(\nabla v+(\nabla v)^{T}\big)-\frac{1}{n-1}\mathrm{div}v\mathrm{Id},
\end{align*}		
where $\mathcal{S}_{0}^{n\times n}$ represents the space whose elements consist of trace-free symmetric $n\times n$ matrices, $\mathbb{P}_{H}$ is the Leray projection, $v\in C^{\infty}(\mathbb{T}^{n},\mathbb{R}^{n})$ is the solution of 
$$\Delta v=u-\dashint_{\mathbb{T}^{n}}udx,\quad\mathrm{with}\;\dashint_{\mathbb{T}^{n}}vdx=0.$$
\end{definition}	
In light of the definition of $\mathcal{R}$, we have from a straightforward computation that $\mathrm{div}\mathcal{R}u=u-\dashint_{\mathbb{T}^{n}}u$ for $n\geq3$. Recall that $u_{m+1}=w_{m+1}+u_{\ell}$. Define the new Reynolds stress $\mathring{R}_{m+1}$ and scalar pressure $p_{m+1}$ as
\begin{align*}
\mathring{R}_{m+1}:=&\mathring{R}_{\mathrm{linear}}+\mathring{R}_{\mathrm{corrector}}+\mathring{R}_{\mathrm{oscillation}},\quad p_{m+1}:=p_{\ell}-P,
\end{align*}	
where $\mathring{R}_{\mathrm{linear}},\mathring{R}_{\mathrm{corrector}}$ are given by
\begin{align*}
\mathring{R}_{\mathrm{linear}}:=&\mathcal{R}\big(-\nu\Delta w_{m+1}+\partial_{t}(w_{m+1}^{(p)}+w_{m+1}^{(c)})+\mathrm{div}(u_{\ell}\otimes w_{m+1}+w_{m+1}\otimes u_{\ell})\big),\\
\mathring{R}_{\mathrm{corrector}}:=&\mathcal{R}\mathrm{div}\big((w_{m+1}^{(c)}+w_{m+1}^{(t)})\otimes w_{m+1}+w_{p+1}^{(p)}\otimes(w_{m+1}^{(c)}+w_{m+1}^{(t)})\big),
\end{align*}	
and $\mathring{R}_{\mathrm{oscillation}}, P$ are determined by
\begin{align}\label{AD90}
\mathrm{div} \mathring{R}_{\mathrm{oscillation}}+\nabla P=\partial_{t}w_{m+1}^{(t)}+\mathrm{div}(w_{m+1}^{(p)}\otimes w_{m+1}^{(p)}+\mathring{R}_{\ell}).
\end{align}	
Then $(u_{m+1},\mathring{R}_{m+1},p_{m+1})$ is a solution to the Navier-Stokes-Reynolds system. With regard to the estimates of $\mathring{R}_{m+1}$, we have
\begin{lemma}\label{lem10}
There exists some constant $2<q\ll3$ such that
\begin{align}\label{E85}
\|\mathring{R}_{m+1}\|_{L^{1}}\leq \vartheta_{m+2}^{\frac{q-2}{q}}\delta_{m+2}\lambda_{m+1}^{-2\varepsilon}.
\end{align}
\end{lemma}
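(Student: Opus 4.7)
The plan is to estimate each of the three pieces $\mathring{R}_{\mathrm{linear}}$, $\mathring{R}_{\mathrm{corrector}}$, $\mathring{R}_{\mathrm{oscillation}}$ separately in a Lebesgue norm $L^p$ with $p>1$ taken close to $1$ (for example $p=\frac{2q}{q+2}$), on which the antidivergence operator $\mathcal{R}$ is bounded by Calderon--Zygmund theory. Embedding $L^p\hookrightarrow L^1$ on $\mathbb{T}^3$ then upgrades each bound to $L^1$. Since the target right-hand side $(\sigma^2 r)^{\frac{q-2}{q}}\delta_{m+2}\lambda_{m+1}^{-2\varepsilon}$ carries a factor $\delta_{m+2}=\lambda_{m+1}^{-2b\beta}\delta_{m+1}$, the task is to produce a negative power $\lambda_{m+1}^{-\alpha}$ with $\alpha>2b\beta+2\varepsilon$ in front of each $\ell^{-O(1)}(\sigma^2 r)^{\frac{q-2}{q}}\delta_{m+1}$; after logging the resulting inequalities I will adjoin them as additional constraints on $q$, $\varepsilon_{\ast}$, $\varepsilon$, $b$, $\beta$ (alongside \eqref{I01}--\eqref{I03}), and fix parameters so that every restriction is simultaneously satisfied.

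For $\mathring{R}_{\mathrm{linear}}$ I would bound the three contributions $\nu\,\mathcal{R}\Delta w_{m+1}$, $\mathcal{R}\partial_t(w^{(p)}_{m+1}+w^{(c)}_{m+1})$ and $\mathcal{R}\mathrm{div}(u_{\ell}\otimes w_{m+1}+w_{m+1}\otimes u_{\ell})$ in $L^p$. The first uses the $W^{1,p}$ bound on $w_{m+1}$ coming from interpolation of Proposition \ref{prop06}. The second requires the time-derivative estimate for $a_{(\zeta)}\partial_tW_{(\zeta)}$, $a_{(\zeta)}\partial_t V_{(\zeta)}$, etc., which by \eqref{M01} contributes an extra factor $\frac{\lambda_{m+1}\sigma\mu}{r}$; one checks that after applying $\mathcal{R}$ (which removes a derivative) the resulting bound is of the form $\ell^{-O(1)}\delta^{1/2}_{m+1}(\sigma^2r)^{\frac{q-2}{q}}\lambda_{m+1}^{-\alpha_1}$ for a positive $\alpha_1$ dictated by \eqref{ZQ009}. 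The third term is controlled by Hölder's inequality and the bounds on $u_{\ell}$ in Lemma \ref{lem02}.

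The corrector piece $\mathring{R}_{\mathrm{corrector}}=\mathcal{R}\mathrm{div}((w^{(c)}+w^{(t)})\otimes w_{m+1}+w^{(p)}\otimes(w^{(c)}+w^{(t)}))$ is handled by Hölder, using that the proof of Proposition \ref{prop06} gives $\|w^{(c)}\|_{L^q},\|w^{(t)}\|_{L^q}\le \ell^{-O(1)}\delta^{1/2}_{m+1}\lambda_{m+1}^{-\varepsilon_{\ast}-\beta}$. Pairing with the $L^{q'}$-type bound on $w^{(p)}, w_{m+1}$ obtained by the same $L^q$--normalized building-block estimates (the $L^p$ analogue of Lemma \ref{lem26}) yields the additional smallness factor $\lambda_{m+1}^{-\varepsilon_{\ast}-\beta}$ needed to absorb the loss from $\mathcal{R}\mathrm{div}$ being order $1$.

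The oscillation term is the main obstacle and is the reason for the specific choice of $w^{(t)}_{m+1}$. Here I would exploit the algebraic identities $\zeta\cdot\nabla\phi_{(\zeta)}=0$ and $\partial_t\psi_{(\zeta)}=\mu\,\zeta\cdot\nabla\psi_{(\zeta)}$ to derive
\begin{equation*}
\mathrm{div}\bigl(a_{(\zeta)}^{2}\phi_{(\zeta)}^{2}\psi_{(\zeta)}^{2}\zeta\otimes\zeta\bigr)=\mu^{-1}\partial_{t}\bigl(a_{(\zeta)}^{2}\phi_{(\zeta)}^{2}\psi_{(\zeta)}^{2}\zeta\bigr)-\mu^{-1}\partial_{t}(a_{(\zeta)}^{2})\,\phi_{(\zeta)}^{2}\psi_{(\zeta)}^{2}\zeta+\phi_{(\zeta)}^{2}\psi_{(\zeta)}^{2}\bigl(\zeta\cdot\nabla a_{(\zeta)}^{2}\bigr)\zeta.
\end{equation*}
Thanks to the disjoint-support property (no cross terms $\zeta\neq\zeta'$ in $w^{(p)}\otimes w^{(p)}$) and the identity \eqref{ZQ10} combined with $\sum_{i}\chi_{(i)}^{2}\equiv 1$ and the definition \eqref{ZQ29} of $a_{(\zeta)}$, the $x$-mean of $\sum a_{(\zeta)}^{2}W_{(\zeta)}\otimes W_{(\zeta)}$ equals $(\sum_{i}\rho_{i}\chi_{(i)}^{2})\mathrm{Id}-\mathring{R}_{\ell}$, so the $\mathbb{P}_{=0}$ piece exactly cancels $\mathring{R}_{\ell}$ modulo the pressure $P$. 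Splitting $W\otimes W$ into this mean and the zero-mean remainder, the identity above (with $\mathbb{P}_{\neq 0}$) shows that the first right-hand term is cancelled by $\partial_{t}w^{(t)}_{m+1}$ up to a gradient absorbed by $\mathbb{P}_{H}$, and the residual two terms are of lower order: they carry the loss $\mu^{-1}\ell^{-O(1)}$ or $\ell^{-O(1)}$, while the antidivergence $\mathcal{R}$ acting on products of the form $\phi_{(\zeta)}^{2}\psi_{(\zeta)}^{2}$ gains the frequency factor $(\lambda_{m+1}\sigma)^{-1}$ via iterated integration by parts on $\mathbb{T}^{3}$ (Fourier support at frequencies of order $\lambda_{m+1}\sigma$). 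Assembling these gains and using \eqref{ZQ009}, the total oscillation contribution is $\ell^{-O(1)}\delta_{m+1}(\sigma^{2}r)^{\frac{q-2}{q}}\lambda_{m+1}^{-\varepsilon_{\ast}}$, which is the critical estimate and is dominated by $(\sigma^{2}r)^{\frac{q-2}{q}}\delta_{m+2}\lambda_{m+1}^{-2\varepsilon}$ once $b$ is taken large and $q-2$, $\beta$, $\varepsilon$ small enough. The hardest step is precisely this oscillation step, because matching its power of $\lambda_{m+1}$ against $\delta_{m+2}\lambda_{m+1}^{-2\varepsilon}$ is what forces the quantitative parameter conditions and determines the admissible range of $q$.
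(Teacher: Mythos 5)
Your proposal follows the paper's proof essentially verbatim: the same three-way decomposition of $\mathring{R}_{m+1}$ estimated in $L^{p}$ with $p$ close to $1$ and Calder\'on--Zygmund bounds on $\mathcal{R}$, the same mean/high-frequency splitting of $a_{(\zeta)}^{2}W_{(\zeta)}\otimes W_{(\zeta)}$ cancelling $\mathring{R}_{\ell}$ into the pressure, the same identity $(\zeta\cdot\nabla)\psi_{(\zeta)}=\mu^{-1}\partial_{t}\psi_{(\zeta)}$ producing the cancellation against $\partial_{t}w^{(t)}_{m+1}$, the same inverse-divergence frequency gain $(\lambda_{m+1}\sigma)^{-1}$ via Lemma \ref{lem12}, and the same closing parameter constraints. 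The only cosmetic deviation is the specific choice $p=\frac{2q}{q+2}$, which must still be checked against the final constraint $p\leq 1+\frac{\varepsilon_{\ast}-2\varepsilon}{q(1+\varepsilon)}$ of \eqref{D50}, but since you explicitly defer the parameter bookkeeping to the end this is immaterial.
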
	

Before proving Lemma \ref{lem10}, we first recall a calculating lemma established in \cite{BV2019}.
\begin{lemma}[Lemma B.1 in \cite{BV2019}]\label{lem12}
Assume that $\kappa\in[1,\infty),p\in(1,2]$ and $M\in\mathbb{N}$ is sufficiently large. Pick $g\in C^{M}(\mathbb{T}^{3})$ satisfying that for all $0\leq j\leq M$, there holds $\|\nabla^{j}g\|_{L^{\infty}}\leq C_{g}\lambda^{j}$ for some $\lambda\in[1,\kappa]$ and $C_{g}>0$. Then for any $f\in L^{p}(\mathbb{T}^{3})$, if $\int_{\mathbb{T}^{3}}g(x)\mathbb{P}_{\geq\kappa}f(x)dx=0$, we have
\begin{align*}
\big\||\nabla|^{-1}(g\mathbb{P}_{\geq\kappa}f)\big\|_{L^{p}}\lesssim C_{g}\Big(1+\frac{\lambda^{M}}{\kappa^{M-2}}\Big)\frac{\|f\|_{L^{p}}}{\kappa},	
\end{align*}	  
where the implicit constant is only dependent of $p,M$.
\end{lemma}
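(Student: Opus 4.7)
The plan is to dualize and then decompose everything by frequency. Since the hypothesis $\int g\mathbb{P}_{\geq\kappa}f\,dx=0$ gives that $g\mathbb{P}_{\geq\kappa}f$ is mean zero, the object $|\nabla|^{-1}(g\mathbb{P}_{\geq\kappa}f)$ is a well-defined mean-zero function on $\mathbb{T}^3$. By $L^p$--$L^{p'}$ duality, pairing against a mean-zero test function $\varphi\in L^{p'}(\mathbb{T}^3)$ with $\|\varphi\|_{L^{p'}}\leq 1$ (we may restrict to mean-zero since the LHS is mean zero), and using self-adjointness of the real Fourier multipliers $|\nabla|^{-1}$ (on the mean-zero subspace) and $\mathbb{P}_{\geq\kappa}$, one has
\[
\big\langle |\nabla|^{-1}(g\mathbb{P}_{\geq\kappa}f),\varphi\big\rangle = \big\langle f,\mathbb{P}_{\geq\kappa}(g\,|\nabla|^{-1}\varphi)\big\rangle.
\]
Thus it suffices to prove
\[
\|\mathbb{P}_{\geq\kappa}(g\,|\nabla|^{-1}\varphi)\|_{L^{p'}} \lesssim \frac{C_g}{\kappa}\Big(1+\frac{\lambda^M}{\kappa^{M-2}}\Big)\|\varphi\|_{L^{p'}}.
\]

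The next step is to split the test function by frequency: $|\nabla|^{-1}\varphi = h_L + h_H$, with $h_L:=\mathbb{P}_{<\kappa/2}|\nabla|^{-1}\varphi$ and $h_H:=\mathbb{P}_{\geq\kappa/2}|\nabla|^{-1}\varphi$. On the mean-zero subspace of $L^{p'}(\mathbb{T}^3)$ the symbols of $\mathbb{P}_{<\kappa/2}|\nabla|^{-1}$ and $\mathbb{P}_{\geq\kappa/2}|\nabla|^{-1}$ both satisfy the Mikhlin condition, with the latter operator norm of order $\kappa^{-1}$. This yields $\|h_L\|_{L^{p'}}\lesssim \|\varphi\|_{L^{p'}}$ and $\|h_H\|_{L^{p'}}\lesssim \kappa^{-1}\|\varphi\|_{L^{p'}}$. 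The high-frequency piece is then immediate:
\[
\|\mathbb{P}_{\geq\kappa}(gh_H)\|_{L^{p'}} \leq C\|g\|_{L^\infty}\|h_H\|_{L^{p'}}\lesssim \frac{C_g}{\kappa}\|\varphi\|_{L^{p'}},
\]
which produces the leading $1/\kappa$ contribution.

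For the low-frequency piece I would exploit the band-limiting: $h_L$ is Fourier-supported in $\{|\xi|<\kappa/2\}$, so writing $g=\mathbb{P}_{<\kappa/2}g+\mathbb{P}_{\geq\kappa/2}g$ the product $(\mathbb{P}_{<\kappa/2}g)h_L$ has Fourier support in $\{|\xi|<\kappa\}$ and is annihilated by $\mathbb{P}_{\geq\kappa}$. Consequently
\[
\mathbb{P}_{\geq\kappa}(gh_L) = \mathbb{P}_{\geq\kappa}\big((\mathbb{P}_{\geq\kappa/2}g)h_L\big),
\]
and the high-frequency part of $g$ is small by iterated Bernstein in $L^\infty$:
\[
\|\mathbb{P}_{\geq\kappa/2}g\|_{L^\infty}\lesssim \kappa^{-M}\|\nabla^M g\|_{L^\infty}\lesssim C_g\frac{\lambda^M}{\kappa^M}.
\]
Hence $\|\mathbb{P}_{\geq\kappa}(gh_L)\|_{L^{p'}}\lesssim C_g\lambda^M\kappa^{-M}\|\varphi\|_{L^{p'}}$, which is stronger than (hence implies) the stated penalty $C_g\lambda^M/\kappa^{M-1}$ after multiplication by $\kappa^{-1}$. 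Adding the two bounds gives the desired estimate.

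\paragraph{Main obstacle.}
The only nontrivial point is ensuring that all the Fourier multipliers used---$|\nabla|^{-1}$ on mean-zero functions, the cutoffs $\mathbb{P}_{<\kappa/2}$ and $\mathbb{P}_{\geq\kappa/2}$, and their compositions with $|\nabla|^{-1}$---are genuinely bounded on $L^{p'}(\mathbb{T}^3)$, uniformly in $\kappa$ for the bounded ones and with sharp $\kappa^{-1}$ gain where claimed. This is standard provided the cutoffs are taken smooth (not sharp characteristic functions) and Mikhlin/H\"ormander multiplier theorems on the torus are invoked; the hypothesis $1<p\leq 2$ lies in the scope of Mikhlin, and ``$M$ sufficiently large'' simply quantifies how much Bernstein-type decay is required to control the commutator term $\lambda^M/\kappa^M$. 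The mean-zero hypothesis $\int g\mathbb{P}_{\geq\kappa}f\,dx=0$ plays no role beyond giving meaning to $|\nabla|^{-1}(g\mathbb{P}_{\geq\kappa}f)$ and permitting the duality pairing against a mean-zero test function.
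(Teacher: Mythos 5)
The paper offers no proof of this lemma at all: it is quoted verbatim as Lemma B.1 of \cite{BV2019}, so the only meaningful comparison is with the original argument there, which runs a high/low frequency splitting directly on $g\mathbb{P}_{\geq\kappa}f$: the piece $|\nabla|^{-1}\mathbb{P}_{\geq\kappa/2}(g\mathbb{P}_{\geq\kappa}f)$ gains the factor $\kappa^{-1}$ from the multiplier $|\nabla|^{-1}\mathbb{P}_{\geq\kappa/2}$, while in the low-frequency remainder the output frequency being below $\kappa/2$ forces the $g$-frequency above $\kappa/2$, so it is controlled by $\|\mathbb{P}_{\geq\kappa/2}g\|_{L^{\infty}}\lesssim C_{g}\lambda^{M}\kappa^{-M}$, the zero-mean hypothesis guaranteeing that $|\nabla|^{-1}$ only ever acts away from the zero mode. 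Your proposal is correct and is essentially the dual reflection of this mechanism: after pairing with a mean-zero $\varphi\in L^{p'}$ you perform the same splitting on $|\nabla|^{-1}\varphi$ and on $g$, and your two contributions ($C_{g}\kappa^{-1}$ from the high piece, $C_{g}\lambda^{M}\kappa^{-M}$ from the low-low cancellation plus iterated Bernstein) recover, and in fact slightly improve ($\lambda^{M}/\kappa^{M-1}$ in place of $\lambda^{M}/\kappa^{M-2}$), the stated bound; the duality step itself buys nothing, since the identical splitting applied directly to $g\mathbb{P}_{\geq\kappa}f$ avoids it, but it is harmless. Two details should be pinned down to make the write-up airtight: the projections must be smooth Littlewood--Paley cutoffs (with sharp radial cutoffs $\mathbb{P}_{\geq\kappa}$ is not uniformly bounded on $L^{p'}$ for $p'\neq2$), which you do note; and the splitting frequency must be tuned to the symbol convention for $\mathbb{P}_{\geq\kappa}$, e.g.\ split $g$ and $|\nabla|^{-1}\varphi$ at $c\kappa$ with $c$ small enough that the product of the two low pieces lies entirely below the lower edge of the cutoff, so that it is genuinely annihilated --- a purely cosmetic adjustment of constants. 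With those conventions fixed, every multiplier bound you invoke ($|\nabla|^{-1}\mathbb{P}_{\neq0}$, $\kappa\,|\nabla|^{-1}\mathbb{P}_{\geq\kappa/2}$, $\mathbb{P}_{\geq\kappa}$) is a standard Mikhlin/transference estimate on $\mathbb{T}^{3}$ valid for $1<p'<\infty$, and your argument goes through.
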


\begin{proof}[Proof of Lemma \ref{lem10}]
{\bf{Step 1.}} {\textit{Estimates of $\mathring{R}_{\mathrm{linear}}$ and $\mathring{R}_{\mathrm{corrector}}$}}.
Similarly as in Proposition \ref{prop06}, it follows from the Calder\'{o}n-Zygmund estimates, \eqref{ZQ009}, \eqref{M01}, Lemmas \ref{lem02} and \ref{lem06}, the fact of $i_{\max}\ell\ll1$ that for $p\in(1,2)$ and $q\in(2,3)$, 
\begin{align}\label{DE12}
\|\mathring{R}_{\mathrm{linear}}\|_{L^{p}}\leq&\|\mathcal{R}(-\nu\Delta w_{m+1})\|_{L^{p}}+\|\mathcal{R}(\partial_{t}(w_{m+1}^{(p)}+w_{m+1}^{(c)}))\|_{L^{p}}\notag\\
&+\|\mathcal{R}\mathrm{div}(u_{\ell}\otimes w_{m+1}+w_{m+1}\otimes u_{\ell})\|_{L^{p}}\notag\\
\lesssim&\|w_{m+1}\|_{W^{1,p}}+\sum_{i}\sum_{\zeta\in\Lambda_{(i)}}\|\partial_{t}\mathcal{R}\mathrm{curl}\mathrm{curl}(a_{(\zeta)}V_{(\zeta)})\|_{L^{p}}+\|u_{\ell}\|_{L^{\infty}}\|w_{m+1}\|_{L^{p}}\notag\\
\lesssim&\|w_{m+1}\|_{W^{1,p}}+\|u_{\ell}\|_{W^{3,p}}\|w_{m+1}\|_{L^{p}}\notag\\
&+\sum_{i}\sum_{\zeta\in\Lambda_{(i)}}\big(\|a_{(\zeta)}\|_{C^{1}_{x,t}}\|\partial_{t}V_{(\zeta)}\|_{W^{1,p}}+\|a_{(\zeta)}\|_{C^{2}_{x,t}}\|V_{(\zeta)}\|_{W^{1,p}}\big)\notag\\
\lesssim&\ell^{-17}\delta_{m+1}^{1/2}\lambda_{m+1}\vartheta_{m+1}^{1/p-1/q}+\ell^{-2}\delta^{1/2}_{m}\delta_{m+1}^{1/2}\lambda_{m}^{1+\varepsilon}\vartheta_{m+1}^{1/p-1/q}\notag\\
&+\ell^{-10}\delta_{m+1}^{1/2}\frac{\sigma\mu}{r}\vartheta_{m+1}^{1/p-1/q}+\ell^{-11}\delta_{m+1}^{1/2}\lambda_{m+1}^{-1}\vartheta_{m+1}^{1/p-1/q}\notag\\
\lesssim& \ell^{-17}\delta_{m+1}^{1/2}\vartheta_{m+1}^{1/p-2/q}\lambda_{m+1}^{-\varepsilon_{\ast}},
\end{align}	
and 
\begin{align}\label{DE13}
\|\mathring{R}_{\mathrm{corrector}}\|_{L^{p}}\lesssim&\|(w_{m+1}^{(c)}+w_{m+1}^{(t)})\otimes w_{m+1}+w_{p+1}^{(p)}\otimes(w_{m+1}^{(c)}+w_{m+1}^{(t)})\|_{L^{p}}	\notag\\
\lesssim&\|w_{m+1}^{(c)}+w_{m+1}^{(t)}\|_{L^{2p}}\|w_{m+1}\|_{L^{2p}}+\|w_{m+1}^{(p)}\|_{L^{2p}}\|w_{m+1}^{(c)}+w_{m+1}^{(t)}\|_{L^{2p}}\notag\\
\lesssim&\ell^{-35}\delta_{m+1}\mu^{-1}\vartheta_{m+1}^{1/p-3/q}\lesssim \ell^{-35}\delta_{m+1}\vartheta_{m+1}^{1/p-2/q}\lambda_{m+1}^{-\varepsilon_{\ast}}.
\end{align}	
{\bf{Step 2.}} {\textit{Estimate of $\mathring{R}_{\mathrm{oscillation}}$.}}
According to \eqref{AD90}, we first find exact expressions of $\mathring{R}_{\mathrm{oscillation}}$ and $P$. In light of \eqref{ZQ10} and \eqref{ZQ29}, we have
\begin{align*}
&\mathrm{div}(w_{m+1}^{(p)}\otimes w_{m+1}^{(p)}+\mathring{R}_{\ell})=\sum_{i,j}\sum_{\zeta\in\Lambda_{(i)},\zeta'\in\Lambda_{(j)}}\mathrm{div}\big(a_{(\zeta)}a_{(\zeta')}W_{(\zeta)}\otimes W_{(\zeta')}\big)+\mathrm{div}\mathring{R}_{\ell}\notag\\
&=\sum_{i}\sum_{\zeta\in\Lambda_{(i)}}\mathrm{div}\big(a_{(\zeta)}^{2}W_{(\zeta)}\otimes W_{(\zeta)}\big)+\mathrm{div}\mathring{R}_{\ell}\notag\\
&=\sum_{i}\sum_{\zeta\in\Lambda_{(i)}}\mathrm{div}\Big(a_{(\zeta)}^{2}\Big(W_{(\zeta)}\otimes W_{(\zeta)}-\dashint_{\mathbb{T}^{3}}W_{(\zeta)}\otimes W_{(\zeta)}dx\Big)\Big)+\nabla\Big(\sum_{i\geq0}\rho_{i}\chi_{(i)}^{2}\Big)\notag\\
&=\sum_{i}\sum_{\zeta\in\Lambda_{(i)}}\mathrm{div}\big(a_{(\zeta)}^{2}\mathbb{P}_{\geq N_{\Lambda}\lambda_{m+1}\sigma}(W_{(\zeta)}\otimes W_{(\zeta)})\big)+\nabla\Big(\sum_{i\geq0}\rho_{i}\chi_{(i)}^{2}\Big).
\end{align*}	
For simplicity, denote $\mathcal{A}_{(\zeta)}:=\mathrm{div}\big(a_{(\zeta)}^{2}\mathbb{P}_{\geq N_{\Lambda}\lambda_{m+1}\sigma}(W_{(\zeta)}\otimes W_{(\zeta)})\big)$. We further decompose it into two parts $\mathcal{A}_{(\zeta,1)}$ and $\mathcal{A}_{(\zeta,2)}$ given by
\begin{align*}
\begin{cases}
\mathcal{A}_{(\zeta,1)}=\mathbb{P}_{\neq0}\big(\nabla(a^{2}_{(\zeta)})\mathbb{P}_{\geq N_{\Lambda}\lambda_{m+1}\sigma}(W_{(\zeta)}\otimes W_{(\zeta)})\big),\\
\mathcal{A}_{(\zeta,2)}=\mathbb{P}_{\neq0}\big(a^{2}_{(\zeta)}\mathrm{div}(W_{(\zeta)}\otimes W_{(\zeta)})\big).
\end{cases}
\end{align*}	 
Observe from the definition of $\psi_{(\zeta)}$ that $(\zeta\cdot\nabla)\psi_{(\zeta)}=\mu^{-1}\partial_{t}\psi_{(\zeta)}.$ This leads to the following identity
\begin{align*}
\mathrm{div}(W_{(\zeta)}\otimes W_{(\zeta)})=2\big(W_{(\zeta)}\cdot\nabla\psi_{(\zeta)}\big)\phi_{(\zeta)}\zeta=\mu^{-1}\phi_{(\zeta)}^{2}\partial_{t}\psi_{(\zeta)}^{2}\zeta.
\end{align*}
Substituting this identity into $\mathcal{A}_{(\zeta,2)}$, we deduce
\begin{align}\label{E09}
\mathcal{A}_{(\zeta,2)}=\mu^{-1}\partial_{t}\mathbb{P}_{\neq0}\big(a^{2}_{(\zeta)}\phi^{2}_{(\zeta)}\psi^{2}_{(\zeta)}\zeta\big)-\mu^{-1}\mathbb{P}_{\neq0}\big((\partial_{t}a^{2}_{(\zeta)})\phi^{2}_{(\zeta)}\psi^{2}_{(\zeta)}\zeta\big).
\end{align}	
For convenience, write $\mathcal{A}_{(\zeta,3)}:=-\mu^{-1}\mathbb{P}_{\neq0}\big((\partial_{t}a^{2}_{(\zeta)})\phi^{2}_{(\zeta)}\psi^{2}_{(\zeta)}\zeta\big).$ Recalling the definition of $w_{m+1}^{(t)}$ and in view of $\mathbb{P}_{H}=\mathrm{Id}-\nabla\Delta^{-1}\mathrm{div}$, we have from \eqref{E09} that
\begin{align*}
&\sum_{i}\sum_{\zeta\in\Lambda_{(i)}}\mathcal{A}_{(\zeta,2)}+\partial_{t}w_{m+1}^{(t)}\notag\\
&=\mu^{-1}\sum_{i}\sum_{\zeta\in\Lambda_{(i)}}(\mathrm{Id}-\mathbb{P}_{H})\partial_{t}\mathbb{P}_{\neq0}\big(a^{2}_{(\zeta)}\phi^{2}_{(\zeta)}\psi^{2}_{(\zeta)}\zeta\big)+\sum_{i}\sum_{\zeta\in\Lambda_{(i)}}\mathcal{A}_{(\zeta,3)}\notag\\
&=\nabla\Big(\mu^{-1}\sum_{i}\sum_{\zeta\in\Lambda_{(i)}}\Delta^{-1}\mathrm{div}\partial_{t}\mathbb{P}_{\neq0}\big(a^{2}_{(\zeta)}\phi^{2}_{(\zeta)}\psi^{2}_{(\zeta)}\zeta\big)\Big)+\sum_{i}\sum_{\zeta\in\Lambda_{(i)}}\mathcal{A}_{(\zeta,3)}.
\end{align*}	
Combining these above facts, we see that the explicit forms of $\mathring{R}_{\mathrm{oscillation}}$ and $P$ are, respectively, given by
\begin{align*}
\mathring{R}_{\mathrm{oscillation}}=&\sum_{i}\sum_{\zeta\in\Lambda_{(i)}}\mathcal{R}(\mathcal{A}_{(\zeta,1)}+\mathcal{A}_{(\zeta,3)}),
\end{align*}
and
\begin{align*}
 P=&\sum_{i\geq0}\rho_{i}\chi_{(i)}^{2}+\mu^{-1}\sum_{i}\sum_{\zeta\in\Lambda_{(i)}}\Delta^{-1}\mathrm{div}\partial_{t}\mathbb{P}_{\neq0}\big(a^{2}_{(\zeta)}\phi^{2}_{(\zeta)}\psi^{2}_{(\zeta)}\zeta\big).
\end{align*}	
Applying Lemma \ref{lem12} with $\lambda=\ell^{-1},$ $C_{g}=\ell^{-17}\delta_{m+1}$ and $\kappa=N_{\Lambda}\lambda_{m+1}\sigma$, we obtain that for a sufficiently large $L>0$,
\begin{align*}
\|\mathcal{R}\mathcal{A}_{(\zeta,1)}\|_{L^{p}}\lesssim&\big\||\nabla|^{-1}\mathbb{P}_{\neq0}\big(\nabla(a^{2}_{(\zeta)})\mathbb{P}_{\geq N_{\Lambda}\lambda_{m+1}\sigma}(W_{(\zeta)}\otimes W_{(\zeta)})\big)\big\|_{L^{p}}\notag\\
\lesssim&\frac{\ell^{-17}\delta_{m+1}}{\lambda_{m+1}\sigma}\bigg(1+\frac{1}{\ell^{L}(\lambda_{m+1}\sigma)^{L-2}}\bigg)\|W_{(\zeta)}\otimes W_{(\zeta)}\|_{L^{p}}\notag\\
\lesssim&\frac{\ell^{-17}\delta_{m+1}}{\lambda_{m+1}\sigma}\|\phi_{(\zeta)}\|_{L^{2p}}^{2}\|\psi_{(\zeta)}\|_{L^{2p}}^{2}\lesssim \frac{\ell^{-17}\delta_{m+1}\vartheta_{m+1}^{1/p-2/q}}{\lambda_{m+1}\sigma}.
\end{align*}	
Analogously, we have
\begin{align*}
\|\mathcal{R}\mathcal{A}_{(\zeta,3)}\|_{L^{p}}\lesssim&\mu^{-1}\|(\partial_{t}a^{2}_{(\zeta)})\phi^{2}_{(\zeta)}\psi^{2}_{(\zeta)}\zeta\|_{L^{p}}\notag\\
\lesssim&\mu^{-1}\|a_{(\zeta)}\|_{C^{1}_{t,x}}\|a_{(\zeta)}\|_{L^{\infty}}\|\phi_{(\zeta)}\|_{L^{2p}}^{2}\|\psi_{(\zeta)}\|_{L^{2p}}^{2}\notag\\
\lesssim&\mu^{-1}\ell^{-17}\delta_{m+1}\vartheta_{m+1}^{1/p-2/q}.
\end{align*}	
Therefore, we have
\begin{align*}
\|\mathring{R}_{\mathrm{oscillation}}\|_{L^{p}}\lesssim&\sum_{i}\sum_{\zeta\in\Lambda_{(i)}}\big(\|\mathcal{R}\mathcal{A}_{(\zeta,1)}\|_{L^{p}}+\|\mathcal{R}\mathcal{A}_{(\zeta,3)}\|_{L^{p}}\big)\notag\\
\lesssim&\frac{\ell^{-17}\delta_{m+1}\vartheta_{m+1}^{1/p-2/q}}{\lambda_{m+1}\sigma},
\end{align*}	
which, in combination with \eqref{DE12}--\eqref{DE13} and H\"{o}lder's inequality, reads that
\begin{align*}
\|\mathring{R}_{m+1}\|_{L^{1}}\lesssim\|\mathring{R}_{m+1}\|_{L^{p}}\leq \ell^{-36}\delta_{m+1}^{1/2}\vartheta_{m+1}^{1/p-2/q}\lambda_{m+1}^{-\varepsilon_{\ast}}.
\end{align*}	
To achieve \eqref{E85}, it suffices to ensure that
\begin{align*}
\ell^{-36}\delta_{m+1}^{1/2}\vartheta_{m+1}^{1/p-2/q}\lambda_{m+1}^{-\varepsilon_{\ast}}\leq\vartheta_{m+2}^{\frac{q-2}{q}}\delta_{m+2}\lambda_{m+1}^{-2\varepsilon},
\end{align*}	
which is equivalent to requiring
\begin{align*}
\theta:=&36\Big((q-2)(1+\varepsilon_{\ast})+\frac{5+\beta(b-1)}{b}\Big)-\beta+\frac{q(1+\varepsilon)(p-1)}{p}-\varepsilon_{\ast}\notag\\
\leq&-2b\beta-2\varepsilon-(b-1)(q-2)(1+\varepsilon_{\ast}).	
\end{align*}	
Note that
\begin{align*}
&\theta+2b\beta+\varepsilon_{\ast}+(b-1)(q-2)(1+\varepsilon_{\ast})\notag\\
&\leq(b+35)(q-2)(1+\varepsilon_{\ast})+\frac{180}{b}+(36+2b)\beta+q(1+\varepsilon)(p-1).
\end{align*}	
Therefore, if the parameters $p,q,b,\beta$ satisfy
\begin{align}\label{D50}
\begin{cases}
2<q\leq2+\frac{\varepsilon_{\ast}-2\varepsilon}{4(b+35)(1+\varepsilon_{\ast})},\quad b\geq\frac{720}{\varepsilon_{\ast}-2\varepsilon},\\
0<\beta\leq\frac{\varepsilon_{\ast}-2\varepsilon}{4(36+2b)},\quad1<p\leq1+\frac{\varepsilon_{\ast}-2\varepsilon}{q(1+\varepsilon)},
\end{cases}
\end{align}	
then $\theta+2b\beta+\varepsilon_{\ast}+(b-1)(q-2)(1+\varepsilon_{\ast})\leq\varepsilon_{\ast}-2\varepsilon$ and thus \eqref{E85} is proved.

\end{proof}

\subsection{The energy iteration}
This section is devoted to proving that \eqref{QZ005} holds with $m$ replaced by $m+1.$ 
\begin{lemma}\label{lem20}
There exists some constant $2<q\ll3$ such that	
\begin{align}\label{EI001}
		0\leq e(t)-\int_{\mathbb{T}^{3}}|u_{m+1}|^{2}\leq\vartheta_{m+2}^{\frac{q-2}{q}}\delta_{m+2},	
\end{align}
and
\begin{align}\label{EI002}
e(t)-\int_{\mathbb{T}^{3}}|u_{m+1}|^{2}\leq\frac{\vartheta_{m+2}^{\frac{q-2}{q}}\delta_{m+2}}{10}\;\Longrightarrow\;u_{m+1}(\cdot,t)\equiv0,\;\mathring{R}_{m+1}(\cdot,t)\equiv0.
\end{align}	
\end{lemma}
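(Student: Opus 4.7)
My plan is to compute $\int_{\mathbb{T}^{3}}|u_{m+1}|^{2}\,dx$ with $u_{m+1}=u_\ell+w_{m+1}^{(p)}+w_{m+1}^{(c)}+w_{m+1}^{(t)}$, isolate the principal contribution $\int|w_{m+1}^{(p)}|^{2}\,dx$, and match it against the definitions of $\rho_{0}$, $\tilde e$ so that the residual falls in the prescribed window.

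First, I write
\begin{align*}
\int|u_{m+1}|^{2}\,dx=\int|u_{m}|^{2}\,dx+\Big(\int|u_{\ell}|^{2}-\int|u_{m}|^{2}\Big)+2\int u_{\ell}\cdot w_{m+1}\,dx+\int|w_{m+1}|^{2}\,dx,
\end{align*}
and bound each auxiliary piece as an error. The mollification difference uses the continuity estimate \eqref{W01}. The cross term $\int u_{\ell}\cdot w_{m+1}\,dx$ is handled by Lemma \ref{lem12}: $u_{\ell}$ is smooth at scale $\ell^{-1}$ while $w_{m+1}$ has zero mean in its fast variable and oscillates at frequency $\gtrsim N_{\Lambda}\lambda_{m+1}\sigma$, producing a gain of $(\lambda_{m+1}\sigma)^{-1}$. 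The mixed self-products $\int|w_{m+1}^{(c)}|^{2}$, $\int|w_{m+1}^{(t)}|^{2}$, $\int w_{m+1}^{(p)}\cdot w_{m+1}^{(c,t)}$ are already small in view of the sharper bounds in Proposition \ref{prop06} (they carry an extra factor $\mu^{-1}$ or $\lambda_{m+1}^{-1}$), and with the parameter choices \eqref{ZQ009}, \eqref{AAA01} that were already used to secure Lemma \ref{lem10}, all these errors are bounded by $\frac{1}{100}(\sigma^{2}r)^{(q-2)/q}\delta_{m+2}$.

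Next, I evaluate $\int|w_{m+1}^{(p)}|^{2}\,dx$. Since $W_{(\zeta)}\otimes W_{(\zeta')}\equiv 0$ for $\zeta\neq \zeta'$ and $\chi_{(i)}\chi_{(j)}\equiv 0$ for $|i-j|\geq 2$, the integral collapses to $\sum_{i\geq 0}\sum_{\zeta\in\Lambda_{(i)}}\int a_{(\zeta)}^{2}|W_{(\zeta)}|^{2}\,dx$. Splitting $|W_{(\zeta)}|^{2}=\dashint|W_{(\zeta)}|^{2}\,dx+\bigl(|W_{(\zeta)}|^{2}-\dashint|W_{(\zeta)}|^{2}\,dx\bigr)$ and pairing the slow coefficient $a_{(\zeta)}^{2}$ with the oscillatory remainder via Lemma \ref{lem12} yields a further negligible error. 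Taking the trace in \eqref{ZQ10} and using $\mathrm{tr}\,\mathring R_{\ell}=0$ gives
\begin{align*}
\sum_{\zeta\in\Lambda_{(i)}}a_{(\zeta)}^{2}\dashint|W_{(\zeta)}|^{2}\,dx=(c_{q}c_{q}^{\ast})^{-1}(\sigma^{2}r)^{-(q-2)/q}\rho_{i}\chi_{(i)}^{2}\cdot 3c_{q}c_{q}^{\ast}(\sigma^{2}r)^{(q-2)/q}=3\rho_{i}\chi_{(i)}^{2}.
\end{align*}
Combining with $\rho_{0}\approx\rho$ from \eqref{W902} and the definition $3\rho(t)\int\chi_{(0)}^{2}\,dx=\max\{\tilde e(t)-\tfrac{1}{2}(\sigma^{2}r)^{(q-2)/q}\delta_{m+2},0\}$, together with the algebraic identity $\tilde e(t)+3\sum_{i\geq 1}\rho_{i}\int\chi_{(i)}^{2}\,dx=e(t)-\int|u_{m}|^{2}\,dx$, this produces
\begin{align*}
e(t)-\int|u_{m+1}|^{2}\,dx=\min\!\Big\{\tilde e(t),\tfrac{1}{2}(\sigma^{2}r)^{(q-2)/q}\delta_{m+2}\Big\}+\mathrm{error},
\end{align*}
with $|\mathrm{error}|\leq\tfrac{1}{10}(\sigma^{2}r)^{(q-2)/q}\delta_{m+2}$. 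In the regime $\tilde e(t)\geq\tfrac{1}{2}(\sigma^{2}r)^{(q-2)/q}\delta_{m+2}$ the gap clusters around $\tfrac{1}{2}(\sigma^{2}r)^{(q-2)/q}\delta_{m+2}$, while in the opposite regime one uses $\tilde e(t)\geq 0$ (ensured by the inductive bound \eqref{QZ005} for $m$ combined with \eqref{ZQ015} and, at points where $e(t)-\int|u_{m}|^{2}$ becomes comparable to $\sum_{i\geq 1}\rho_{i}\int\chi_{(i)}^{2}$, the inductive vanishing clause of \eqref{QZ005}); in either case the gap lies in $[0,(\sigma^{2}r)^{(q-2)/q}\delta_{m+2}]$, proving \eqref{EI001}.

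For the dichotomy \eqref{EI002}, the key observation is that $\rho(t)=0$ whenever $\tilde e(t)<\tfrac{1}{2}(\sigma^{2}r)^{(q-2)/q}\delta_{m+2}$, so if this inequality persists on a time window of length $2\ell$ around $t$, then $\rho_{0}(t)=0$ exactly. Suppose the hypothesis $e(t)-\int|u_{m+1}|^{2}\leq\tfrac{1}{10}(\sigma^{2}r)^{(q-2)/q}\delta_{m+2}$ holds: the gap formula from the previous paragraph forces us into the low-energy regime (else the gap would be $\gtrsim\tfrac{1}{2}(\sigma^{2}r)^{(q-2)/q}\delta_{m+2}$), which in turn forces $e(t)-\int|u_{m}|^{2}\leq\tfrac{1}{10}(\sigma^{2}r_{m})^{(q-2)/q}\delta_{m+1}$ via the superexponential decay in \eqref{AAA01}. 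The inductive vanishing in \eqref{QZ005} then gives $u_{m}(\cdot,t)\equiv 0$ and $\mathring R_{m}(\cdot,t)\equiv 0$; propagating this through the time mollification (exploiting continuity of $e$ and the quantitative inductive gap bound to conclude $u_{m}$, $\mathring R_{m}$ also vanish on the $\ell$-window used by $\xi_{\ell}$) yields $u_{\ell}(\cdot,t)\equiv 0\equiv\mathring R_{\ell}(\cdot,t)$. Consequently $\chi_{(i)}(\cdot,t)=0$ for $i\geq 1$ and $\rho_{0}(t)=0$, so every $a_{(\zeta)}(\cdot,t)$ vanishes; hence $w_{m+1}(\cdot,t)\equiv 0$ and finally $u_{m+1}(\cdot,t)\equiv 0\equiv\mathring R_{m+1}(\cdot,t)$. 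The main obstacle is precisely this last propagation step: the inductive vanishing at a single instant $t$ must be promoted to vanishing on a full $\ell$-neighborhood so that the time mollifications $u_{\ell}$, $\mathring R_{\ell}$ also vanish at $t$, and this requires carefully chaining the quantitative dichotomy of \eqref{QZ005} with the smallness of $\ell$ relative to the characteristic variation scale of $e(t)$ and $\int|u_{m}|^{2}$.
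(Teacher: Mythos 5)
Your overall strategy --- compute $\int|w_{m+1}^{(p)}|^2$ via the trace identity, absorb the correctors, cross terms and mollification differences into an error of size $\frac{1}{100}(\sigma^2r)^{(q-2)/q}\delta_{m+2}$, and match the result against $\rho_0$ and $\tilde e$ --- is the paper's, and your identity $e(t)-\int|u_{m+1}|^2=\min\{\tilde e(t),\tfrac12(\sigma^2r)^{(q-2)/q}\delta_{m+2}\}+\mathrm{error}$ is correct. But there are two genuine gaps. First, the lower bound in \eqref{EI001} does not follow from ``$\tilde e(t)\ge0$'': in the regime $0\le\tilde e(t)<\tfrac{1}{10}(\sigma^2r)^{(q-2)/q}\delta_{m+2}$ your formula only gives a gap bounded below by $\tilde e(t)-\tfrac{1}{10}(\sigma^2r)^{(q-2)/q}\delta_{m+2}$, which can be negative, and this regime genuinely occurs (e.g.\ at zeros of $e$). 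The paper resolves this by a dichotomy on $\rho_0(t_*)$: if $\rho_0(t_*)>0$ one shows --- using the intermediate value theorem to produce $t_0$ near $t_*$ with $\tilde e(t_0)=\tfrac12(\sigma^2r)^{(q-2)/q}\delta_{m+2}$, together with time continuity --- that $\tilde e(t_*)$ is in fact close to $\tfrac12(\sigma^2r)^{(q-2)/q}\delta_{m+2}$, so the gap lies in $[\tfrac14,\tfrac34](\sigma^2r)^{(q-2)/q}\delta_{m+2}$; if $\rho_0(t_*)=0$ one shows the perturbation vanishes \emph{identically}, so the gap equals $e(t_*)\ge0$ exactly rather than up to an additive error. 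You have the vanishing machinery in your last paragraph but never feed it back into \eqref{EI001}.

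Second, in \eqref{EI002} you pass from $w_{m+1}(\cdot,t)\equiv0$ to $\mathring R_{m+1}(\cdot,t)\equiv0$ without justification. The term $\mathring R_{\mathrm{linear}}$ contains $\mathcal R\,\partial_t(w_{m+1}^{(p)}+w_{m+1}^{(c)})$, which is \emph{linear} in the coefficients and involves $\partial_t a_{(\zeta)}(\cdot,t)$; the vanishing of $a_{(\zeta)}(\cdot,t)$ alone does not kill it. The paper's argument is that $\rho_0^{1/2}$ and the $\chi_{(i)}$ are nonnegative smooth functions vanishing at $t_*$, so $t_*$ is a minimum point and their time derivatives vanish there, whence $\partial_t a_{(\zeta)}(\cdot,t_*)\equiv0$ and $\partial_t w_{m+1}(\cdot,t_*)\equiv0$. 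This step is needed and missing. (Your propagation of the level-$m$ vanishing to the full $\ell$-window is sound, though the paper organizes it more cleanly: $\rho_0(t_*)=0$ forces $\rho\equiv0$, hence $\tilde e\le\tfrac12(\sigma^2r)^{(q-2)/q}\delta_{m+2}$, on the whole window, so the inductive clause applies at every time of the window without invoking continuity of $\int|u_m|^2$.)
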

\begin{proof}
{\bf{Step 1.}} Claim that 
\begin{align}\label{W88}
	e(t)-\int_{\mathbb{T}^{3}}|u_{m+1}|^{2}dx=&\tilde{e}(t)-3\rho_{0}\int_{\mathbb{T}^{3}}\chi_{(0)}^{2}dx+o(1)\vartheta_{m+2}^{\frac{q-2}{q}}\delta_{m+2},
\end{align}	
where $|o(1)|\leq\frac{1}{100}$, and
\begin{align*}
	\tilde{e}(t)=e(t)-\int_{\mathbb{T}^{3}}|u_{m}|^{2}dx-3\sum_{i\geq1}\rho_{i}\int_{\mathbb{T}^{3}}\chi^{2}_{(i)}dx.
\end{align*}	
Observe from Lemma \ref{lem05} that
\begin{align}\label{E906}
	&\sum_{i\geq0}\sum_{\zeta\in\Lambda_{(i)}}\mathrm{tr}\bigg(\int_{\mathbb{T}^{3}}a_{(\zeta)}^{2}dx\dashint_{\mathbb{T}^{3}}W_{(\zeta)}\otimes W_{(\zeta)}dx\bigg)\notag\\
	&=\sum_{i\geq0}\rho_{i}\chi_{(i)}^{2}\mathrm{tr}\int_{\mathbb{T}^{3}}\sum_{\zeta\in\Lambda_{(i)}}\gamma_{(\zeta)}^{2}\Big(\mathrm{Id}-\frac{\mathring{R}_{\ell}}{\rho_{i}}\Big)\zeta\otimes\zeta dx\notag\\
	&=\sum_{i\geq0}\rho_{i}\chi_{(i)}^{2}\int_{\mathbb{T}^{3}}\mathrm{tr}\Big(\mathrm{Id}-\frac{\mathring{R}_{\ell}}{\rho_{i}}\Big)dx=3\sum_{i\geq0}\rho_{i}\int_{\mathbb{T}^{3}}\chi_{(i)}^{2}dx,
\end{align}
which gives that		
\begin{align*}
\int_{\mathbb{T}^{3}}\big|w_{m+1}^{(p)}\big|^{2}dx=&\mathrm{tr}\int_{\mathbb{T}^{3}}w_{m+1}^{(p)}\otimes w_{m+1}^{(p)}dx=\sum_{i\geq0}\sum_{\zeta\in\Lambda_{(i)}}\mathrm{tr}\int_{\mathbb{T}^{3}}a_{(\zeta)}^{2}W_{(\zeta)}\otimes W_{(\zeta)}dx\notag\\
=&3\sum_{i\geq0}\rho_{i}\int_{\mathbb{T}^{3}}\chi_{(i)}^{2}dx+\sum_{i\geq0}\sum_{\zeta\in\Lambda_{(i)}}\int_{\mathbb{T}^{3}}a_{(\zeta)}^{2}\mathbb{P}_{\geq N_{\Lambda}\lambda_{m+1}\sigma}|W_{(\zeta)}|^{2}dx,
\end{align*}
where $\mathrm{tr}$ denotes the trace operation satisfying that $\mathrm{tr}B=\sum_{i=1}^{3}b_{ii}$ for any matrix $B=(b_{ij})_{3\times3}$.  Making use of integration by parts, we obtain that if $M\gg1$ and $q\in(2,3)$ satisfies \eqref{I01} and \eqref{I03},
\begin{align*}
&\bigg|\sum_{i\geq0}\sum_{\zeta\in\Lambda_{(i)}}\int_{\mathbb{T}^{3}}a_{(\zeta)}^{2}\mathbb{P}_{\geq N_{\Lambda}\lambda_{m+1}\sigma}|W_{(\zeta)}|^{2}dx\bigg|\notag\\
&\leq\sum_{i\geq0}\sum_{\zeta\in\Lambda_{(i)}}\bigg|\int_{\mathbb{T}^{3}}|\nabla|^{M}a_{(\zeta)}^{2}|\nabla|^{-M}\mathbb{P}_{N_{\Lambda}\lambda_{m+1}\sigma}|W_{(\zeta)}|^{2}dx\bigg|\notag\\
&\lesssim\sum_{i\geq0}\sum_{\zeta\in\Lambda_{(i)}}(\lambda_{m+1}\sigma)^{-M}\|a_{(\zeta)}^{2}\|_{C^{M}}\|W_{(\zeta)}\|_{L^{2}}\notag\\
&\lesssim\delta_{m+1}\ell^{-M-17}(\lambda_{m+1}\sigma)^{-M}\vartheta_{m+1}^{\frac{q-2}{q}}\leq\frac{\vartheta_{m+2}^{\frac{q-2}{q}}\delta_{m+2}}{\lambda_{m+1}^{2\beta}},
\end{align*}
and
\begin{align*}
&\bigg|\int_{\mathbb{T}^{3}}w_{m+1}\cdot u_{\ell}dx\bigg|\lesssim\big\|w_{m+1}^{(t)}\big\|_{L^{1}}\|u_{\ell}\|_{L^{\infty}}+\|a_{(\zeta)}V_{(\zeta)}\|_{L^{1}}\|u_{\ell}\|_{C^{2}}\notag\\
&\lesssim\big\|w_{m+1}^{(t)}\big\|_{L^{1}}\|u_{\ell}\|_{W^{2,q}}+\|a_{(\zeta)}\|_{L^{\infty}}\|V_{(\zeta)}\|_{L^{1}}\|u_{\ell}\|_{C^{2}}\notag\\
&\lesssim\ell^{-19}\lambda_{m}^{1+\varepsilon}\delta_{m}^{\frac{1}{2}}\delta_{m+1}\mu^{-1}\vartheta_{m+1}^{\frac{q-2}{q}}+\ell^{-10}\lambda_{m}^{1+\varepsilon}\delta_{m}^{\frac{1}{2}}\delta_{m+1}^{\frac{1}{2}}\frac{\vartheta_{m+1}^{1-1/q}}{\lambda_{m+1}^{2}}\leq\frac{\vartheta_{m+2}^{\frac{q-2}{q}}\delta_{m+2}}{\lambda_{m}^{\beta}}.
\end{align*}		
Applying \eqref{DE13} with $p=1$, we deduce that if $q\in(2,3)$ satisfies \eqref{D50},
\begin{align*}
&\bigg|\int_{\mathbb{T}^{3}}|w_{m+1}|^{2}dx-\int_{\mathbb{T}^{3}}\big|w_{m+1}^{(p)}\big|^{2}dx\bigg|\leq\big(\|w_{m+1}\|_{L^{2}}+\big\|w_{m+1}^{(p)}\big\|_{L^{2}}\big)\big\|w_{m+1}^{(c)}+w_{m+1}^{(t)}\big\|_{L^{2}}\notag\\
&\lesssim \ell^{-35}\delta_{m+1}\vartheta_{m+1}^{1-2/q}\lambda_{m+1}^{-\varepsilon_{\ast}}\leq\vartheta_{m+2}^{\frac{q-2}{q}}\delta_{m+2}\lambda_{m+1}^{-2\varepsilon}.
\end{align*}	
From Lemma \ref{lem02}, H\"{o}lder's inequality and Young's convolution inequality, it follows that if $0<b^{2}\beta<\frac{4-\varepsilon}{2}$ and $2<q<2+\frac{4-\varepsilon}{2b(b-1)(1+\varepsilon_{\ast})}$,
\begin{align}\label{F01}
&\bigg|\int_{\mathbb{T}^{3}}|u_{m}|^{2}dx-\int_{\mathbb{T}^{3}}|u_{\ell}|^{2}dx\bigg|\leq4\|u_{m}\|_{C_{t}L^{2}}\|u_{m}-u_{\ell}\|_{L^{2}}\notag\\
&\leq C\delta_{m+1}^{1/2}\lambda_{m}^{-4+\varepsilon}\vartheta_{m+1}^{\frac{q-2}{q}}\leq\frac{\vartheta_{m+2}^{\frac{q-2}{q}}\delta_{m+2}}{\lambda_{m+1}^{\beta}}.
\end{align}
A consequence of these above facts reads that \eqref{W88} holds.

{\bf{Step 2.}} Claim that when $\rho_{0}(t_{\ast})=0$, we have
\begin{align}\label{C031}
\begin{cases}
u_{m+1}(\cdot,t_{\ast})\equiv0,\quad\mathring{R}_{m+1}(\cdot,t_{\ast})\equiv0,\\\ e(t_{\ast})-\int_{\mathbb{T}^{3}}|u_{m+1}(x,t_{\ast})|^{2}dx\leq\vartheta_{m+2}^{\frac{q-2}{q}}\delta_{m+2}.
\end{cases}
\end{align}
Since $\rho_{0}(t_{\ast})=0$, we have $\rho(t)=0$ for any $|t-t_{\ast}|<\ell$ due to the fact that $\rho$ is nonnegative and continuous. Hence we obtain 
\begin{align*}
\tilde{e}(t)\leq2^{-1}\vartheta_{m+2}^{\frac{q-2}{q}}\delta_{m+2},\quad\forall\,t\in(t_{\ast}-\ell,t_{\ast}+\ell),
\end{align*}
which, in combination with \eqref{ZQ015},  shows that if $0<2b^{2}\beta<\varepsilon$ and $2<q<2+\frac{\varepsilon}{2b(b-1)(1+\varepsilon_{\ast})}$,
\begin{align}\label{F02}
e(t)-\int_{\mathbb{T}^{3}}|u_{m}(x,t)|^{2}dx\leq&	\frac{\vartheta_{m+1}^{\frac{q-2}{q}}\delta_{m+2}}{2}+\vartheta_{m+1}^{\frac{q-2}{q}}\delta_{m+1}\lambda_{m}^{-\varepsilon}\leq\frac{3\vartheta_{m+2}^{\frac{q-2}{q}}\delta_{m+2}}{4}.
\end{align}
It then follows from \eqref{QZ005} that $u_{m}(\cdot,t)\equiv0$ and $\mathring{R}_{m}(\cdot,t)\equiv0$ for all $|t-t_{\ast}|<\ell$, which indicates that $u_{\ell}(\cdot,t_{\ast})\equiv0,\mathring{R}_{\ell}(\cdot,t_{\ast})\equiv0$, and then $\chi_{(i)}(\cdot,t_{\ast})=0$ for every $i\geq1$. This, together with the fact of $\rho_{0}(t_{\ast})=0$, reads that $a_{(\zeta)}(\cdot,t_{\ast})\equiv0$ for any $i\geq0$. So we have $w_{m+1}(\cdot,t_{\ast})\equiv0$ and thus $u_{m+1}(\cdot,t_{\ast})\equiv0$. In light of the fact that $\rho_{0}^{1/2}$ and $\{\chi_{(i)}\}_{i\geq1}$ are all nonnegative smooth functions, then $t_{\ast}$ belongs to their minimum point. Hence we obtain that $\partial_{t}\rho_{0}^{1/2}(t_{\ast})=0$ and $\partial_{t}\chi_{(i)}(\cdot,t_{\ast})\equiv0$ for $i\geq1$, which gives that $\partial_{t}a_{(\zeta)}(\cdot,t_{\ast})\equiv0$ and thus $\partial_{t}w_{m+1}(\cdot,t_{\ast})\equiv0$. A combination of these above facts shows that $\mathring{R}_{m+1}(\cdot,t_{\ast})\equiv0$. Furthermore, we have from \eqref{F01} and \eqref{F02} that
\begin{align*}
&e(t_{\ast})-\int_{\mathbb{T}^{3}}|u_{m+1}(x,t_{\ast})|^{2}dx\notag\\
&=e(t_{\ast})-\int_{\mathbb{T}^{3}}|u_{m}(x,t_{\ast})|^{2}dx+\int_{\mathbb{T}^{3}}|u_{m}(x,t_{\ast})|^{2}dx-\int_{\mathbb{T}^{3}}|u_{\ell}(x,t_{\ast})|^{2}dx\notag\\
&\leq\frac{3\vartheta_{m+2}^{\frac{q-2}{q}}\delta_{m+2}}{4}+\frac{\vartheta_{m+2}^{\frac{q-2}{q}}\delta_{m+2}}{\lambda_{m+1}^{\beta}}\leq\vartheta_{m+2}^{\frac{q-2}{q}}\delta_{m+2}.
\end{align*}		
That is, \eqref{C031} holds. 

{\bf{Step 3.}} Claim that if $\rho_{0}(t_{\ast})\neq0$, then we have
\begin{align}\label{C032}
\frac{\vartheta_{m+2}^{\frac{q-2}{q}}\delta_{m+2}}{4}\leq e(t_{\ast})-\int_{\mathbb{T}^{3}}|u_{m+1}(x,t_{\ast})|^{2}dx\leq\frac{3\vartheta_{m+2}^{\frac{q-2}{q}}\delta_{m+2}}{4}.
\end{align}
On one hand, when $\rho(t_{\ast})>0$, we have from \eqref{W88} that
\begin{align}\label{AZ030}
&e(t_{\ast})-\int_{\mathbb{T}^{3}}|u_{m+1}(x,t_{\ast})|^{2}dx=\tilde{e}(t_{\ast})-3\rho_{0}(t_{\ast})\int_{\mathbb{T}^{3}}\chi_{(0)}^{2}dx+o(1)\vartheta_{m+2}^{\frac{q-2}{q}}\delta_{m+2}\notag\\
&=\frac{\vartheta_{m+2}^{\frac{q-2}{q}}\delta_{m+2}}{2}+3(\rho(t_{\ast})-\rho_{0}(t_{\ast}))\int_{\mathbb{T}^{3}}\chi_{(0)}^{2}dx+o(1)\vartheta_{m+2}^{\frac{q-2}{q}}\delta_{m+2}.
\end{align}	
Analogously as in \eqref{W902}, we obtain that if $0<b^{2}\beta\leq\frac{1}{8}$ and $2<q<2+\frac{1}{4b(b-1)(1+\varepsilon_{\ast})}$,
\begin{align}\label{Z96}
|\rho_{0}(t_{\ast})-\rho(t_{\ast})|\lesssim&\frac{\vartheta_{m+1}^{\frac{q-2}{q}}\delta_{m+1}^{1/2}}{\lambda_{m}^{1/2}}\leq\frac{\vartheta_{m+2}^{\frac{q-2}{q}}\delta_{m+2}}{100|\mathbb{T}^{3}|}.
\end{align}
Inserting this into \eqref{AZ030}, we see that \eqref{C032} holds in the case of $\rho(t_{\ast})>0$.

On the other hand, when $\rho(t_{\ast})=0$, using the fact of $\rho_{0}(t_{\ast})>0$, we obtain that there exists some $t_{0}\in(t_{\ast}-\ell,t_{\ast}+\ell)$ such that 
\begin{align*}
\tilde{e}(t_{0})=\frac{\vartheta_{m+2}^{\frac{q-2}{q}}\delta_{m+2}}{2}.
\end{align*}	
Therefore, it follows from \eqref{W01}, \eqref{W88} and \eqref{Z96} that if $0<2b^{2}\beta<\varepsilon$ and $2<q<2+\frac{\varepsilon}{2b(b-1)(1+\varepsilon_{\ast})}$,
\begin{align*}
&e(t_{\ast})-\int_{\mathbb{T}^{3}}|u_{m+1}(x,t_{\ast})|^{2}dx-\frac{\vartheta_{m+2}^{\frac{q-2}{q}}\delta_{m+2}}{2}\notag\\
&=\tilde{e}(t_{\ast})-\tilde{e}(t_{0})+3(\rho(t_{\ast})-\rho_{0}(t_{\ast}))\int_{\mathbb{T}^{3}}\chi_{(0)}^{2}(x,t_{\ast})dx+o(1)\vartheta_{m+1}^{\frac{q-2}{q}}\delta_{m+2}\notag\\
&\leq \lambda_{m}^{-1}\vartheta_{m+1}^{\frac{q-2}{q}}+\vartheta_{m+1}^{\frac{q-2}{q}}\delta_{m+1}\lambda_{m}^{-\varepsilon}+\frac{\vartheta_{m+2}^{\frac{q-2}{q}}\delta_{m+2}}{20}\leq\frac{\vartheta_{m+2}^{\frac{q-2}{q}}\delta_{m+2}}{4},
\end{align*}	
which shows that \eqref{C032} is proved. 

{\bf{Step 4.}} Proofs of \eqref{EI001}--\eqref{EI002}.  Observe first from \eqref{C031} and \eqref{C032} that \eqref{EI001} holds. For any fixed $t_{\ast}\in[0,T]$, if 
\begin{align*}
e(t_{\ast})-\int_{\mathbb{T}^{3}}|u_{m+1}(x,t_{\ast})|^{2}\leq\frac{\vartheta_{m+2}^{\frac{q-2}{q}}\delta_{m+2}}{10},
\end{align*}	
then we deduce from \eqref{C032} that $\rho_{0}(t_{\ast})=0$. This, in combination with \eqref{C031}, gives that $u_{m+1}(\cdot,t)\equiv0$ and $\mathring{R}_{m+1}(\cdot,t)\equiv0$. The proof is complete.

\end{proof}

We are now ready to prove Proposition \ref{pro01}.
\begin{proof}[Proof of Proposition \ref{pro01}]
A combination of Corollary \ref{coro03}, Lemmas \ref{lem10} and \ref{lem20} shows that Proposition \ref{pro01} holds.

\end{proof}



\noindent{\bf{\large Acknowledgements.}} This work was partially supported by the National Key research and development program of
China (No. 2022YFA1005700). C. Miao was partially supported by the National Natural Science Foundation of China (No. 12371095).




\begin{thebibliography}{99}

\bibitem{ABC2022} D. Albritton, E. Bru\'{e} and M. Colombo, Non-uniqueness of Leray solutions of the forced Navier-Stokes equations. Ann. of Math. (2) 196 (2022), no. 1, 415–455.

\bibitem{ABC202202} D. Albritton, E. Bru\'{e} and M. Colombo, Gluing non-unique Navier-Stokes solutions. Ann. PDE 9 (2023), no. 2, Paper No. 17, 25 pp.

\bibitem{B1953} G.K. Batchelor, The theory of homogeneous turbulence, Cambridge Monographs on Mechanics and Applied Mathematics, Cambridge University Press, 1953.

\bibitem{BBV2020} R. Beekie, T. Buckmaster and V. Vicol, Weak solutions of ideal MHD which do not conserve magnetic helicity. Ann. PDE 6 (2020), no. 1, Paper No. 1, 40 pp.

\bibitem{BN2023} R. Beekie and M. Novack, Non-conservative solutions of the Euler-$\alpha$ equations. J. Math. Fluid Mech. 25 (2023), no. 1, Paper No. 22, 38 pp.

\bibitem{BM2018} H. Brezis and P. Mironescu, Gagliardo-Nirenberg inequalities and non-inequalities: the full story. Ann. Inst. H. Poincar\'{e} C Anal. Non Lin\'{e}aire 35 (2018), no. 5, 1355–1376.

\bibitem{BCV2022} T. Buckmaster, M. Colombo and V. Vicol, Wild solutions of the Navier-Stokes equations whose singular sets in time have Hausdorff dimension strictly less than 1. J. Eur. Math. Soc. (JEMS) 24 (2022), no. 9, 3333–3378.

\bibitem{BDIS2015} T. Buckmaster, C. De Lellis, P. Isett and L. Sz\'{e}kelyhidi, Anomalous dissipation for $1/5$-H\"{o}lder Euler flows. Ann. of Math. (2) 182 (2015), no. 1, 127–172.

\bibitem{BDSV2019} T. Buckmaster, C. De Lellis, L. Sz\'{e}kelyhidi and V. Vicol, Onsager's conjecture for admissible weak solutions. Comm. Pure Appl. Math. 72 (2019), no. 2, 229–274.

\bibitem{BMNV2023} T. Buckmaster, N. Masmoudi, M. Novack and V. Vicol, Intermittent convex integration for the 3D Euler equations. Annals of Mathematics Studies, 217. Princeton University Press, Princeton, NJ, 2023.

\bibitem{BV2019} T. Buckmaster and V. Vicol, Nonuniqueness of weak solutions to the Navier-Stokes equation. Ann. of Math. (2) 189 (2019), no. 1, 101–144.

\bibitem{BV201902} T. Buckmaster and V. Vicol, Convex integration and phenomenologies in turbulence. EMS Surv. Math. Sci. 6 (2019), no. 1-2, 173–263.

\bibitem{BV2021} T. Buckmaster and V. Vicol, Convex integration constructions in hydrodynamics. Bull. Amer. Math. Soc. (N.S.) 58 (2021), no. 1, 1–44.

\bibitem{CDS2012} S. Conti, C. De Lellis and L. Sz\'{e}kelyhidi, $h$-principle and rigidity for $C^{1,\alpha}$ isometric embeddings. Nonlinear partial differential equations, 83–116, Abel Symp., 7, Springer, Heidelberg, 2012.

\bibitem{CMP1994} M. Cannone, Y. Meyer and F. Planchon, Solutions auto-similaires des \'{e}quations de Navier-Stokes. (French) [Self-similar solutions of Navier-Stokes equations] S\'{e}minaire sur les \'{E}quations aux D\'{e}riv\'{e}es Partielles, 1993–1994, Exp. No. VIII, 12 pp., \'{E}cole Polytech., Palaiseau, 1994. 

\bibitem{C2011} J.-Y. Chemin, About weak-strong uniqueness for the 3D incompressible Navier-Stokes system. Comm. Pure Appl. Math. 64 (2011), no. 12, 1587–1598.

\bibitem{CL2021} A. Cheskidov and X. Luo, Nonuniqueness of weak solutions for the transport equation at critical space regularity. Ann. PDE 7 (2021), no. 1, Paper No. 2, 45 pp.

\bibitem{CL2022}  A. Cheskidov and X. Luo, Sharp nonuniqueness for the Navier-Stokes equations. Invent. Math. 229 (2022), no. 3, 987-1054. 

\bibitem{CWT1994} P. Constantin,  E. Weinan and E.S. Titi, Onsager's conjecture on the energy conservation for solutions of Euler's equation. Comm. Math. Phys. 165 (1994), no. 1, 207-209.

\bibitem{DS2017}  S. Daneri and L. Sz\'{e}kelyhidi, Non-uniqueness and h-principle for Hölder-continuous weak solutions of the Euler equations. Arch. Ration. Mech. Anal. 224 (2017), no. 2, 471–514.

\bibitem{DS2009} C. De Lellis and L. Sz\'{e}kelyhidi, The Euler equations as a differential inclusion. Ann. of Math. (2) 170 (2009), no. 3, 1417–1436.

\bibitem{DS2013} C. De Lellis and L. Sz\'{e}kelyhidi, Dissipative continuous Euler flows. Invent. Math. 193 (2013), no. 2, 377–407.

\bibitem{DS2014} C. De Lellis and Sz\'{e}kelyhidi, Dissipative Euler flows and Onsager's conjecture. J. Eur. Math. Soc. (JEMS) 16 (2014), no. 7, 1467–1505.

\bibitem{FJR1972} E.B. Fabes, B.F. Jones and N.M. Rivi\`{e}re, The initial value problem for the Navier-Stokes equations with data in $L^{p}$, Arch. Rational Mech. Anal. 45 (1972), 222-240. 

\bibitem{F2000} C. Fefferman, Existence and smoothness of the Navier–Stokes equation, Navier–Stokes Equation – Offical Problem Description, Clay Mathematics Institute Millennium Problems, 2000. Available at: https://claymath.org/sites/default/files/navierstokes.pdf

\bibitem{FMRT2001} C. Foias, O. Manley, R. Rosa and R. Temam, Navier-Stokes equations and turbulence. Encyclopedia of Mathematics and its Applications, 83. Cambridge University Press, Cambridge, 2001.

\bibitem{F1995} U. Frisch, Turbulence. The legacy of A. N. Kolmogorov. Cambridge University Press, Cambridge, 1995.

\bibitem{FK1964} H. Fujita and T. Kato, On the Navier-Stokes initial value problem. I. Arch. Rational Mech. Anal. 16 (1964), 269–315.

\bibitem{FLT2000} G. Furioli, P.G. Lemari\'{e}-Rieusset and E. Terraneo, Unicit\'{e} dans $L^{3}(\mathbb{R}^{3})$ et d'autres espaces fonctionnels limites pour Navier-Stokes. Rev. Mat. Iberoamericana 16 (2000), no. 3, 605–667. 

\bibitem{G2006} P. Germain, Multipliers, paramultipliers, and weak-strong uniqueness for the Navier-Stokes equations. J. Differential Equations 226 (2006), no. 2, 373–428.

\bibitem{GS2023} J. Guillod and V. \v{S}ver\'{a}k, Numerical investigations of non-uniqueness for the Navier-Stokes initial value problem in borderline spaces. 
J. Math. Fluid Mech. 25 (2023), no. 3, Paper No. 46, 25 pp.

\bibitem{H1951} E. Hopf, \"{U}ber die Anfangswertaufgabe f\"{u}r die hydrodynamischen Grundgleichungen. Math. Nachr. 4 (1951), 213-231.

\bibitem{I2018} P. Isett, A proof of Onsager's conjecture. Ann. of Math. (2) 188 (2018), no. 3, 871–963.

\bibitem{ISS2003} L. Iskauriaza, G. Seregin and V. \v{S}ver\'{a}k, $L_{3,\infty}$-solutions of Navier-Stokes equations and backward uniqueness. Uspekhi Mat. Nauk 58 (2003), no. 2(350), 3–44.

\bibitem{JS2015} H. Jia and V. \v{S}ver\'{a}k, Are the incompressible 3d Navier-Stokes equations locally ill-posed in the natural energy space? J. Funct. Anal. 268 (2015), no. 12, 3734–3766.

\bibitem{KIYIU2003} Y. Kaneda, T. Ishihara, M. Yokokawa, K. Itakura, and A. Uno, Energy dissipation rate and energy spectrum in high resolution direct numerical simulations of turbulence in a periodic box. Phys. Fluids, 15 (2003), no. 2, L21–L24.

\bibitem{K1984} T. Kato, Strong $L^{p}$-solutions of the Navier-Stokes equation in $R^{m}$, with applications to weak solutions. Math. Z. 187 (1984), no. 4, 471-480.

\bibitem{KL1957} A. Kiselev and O. Lady\v{z}enskaya, On the existence and uniqueness of the solution of the nonstationary problem for a viscous, incompressible fluid. (Russian) Izv. Akad. Nauk SSSR Ser. Mat. 21 (1957), 655–680.

\bibitem{KT2001} H. Koch and D. Tataru, Well-posedness for the Navier-Stokes equations. Adv. Math. 157 (2001), no. 1, 22–35.

\bibitem{K194101} A. Kolmogoroff, The local structure of turbulence in incompressible viscous fluid for very large Reynold's numbers. C. R. (Doklady) Acad. Sci. URSS (N.S.) 30 (1941), 301–305.

\bibitem{K194102} A. Kolmogoroff, On degeneration of isotropic turbulence in an incompressible viscous liquid. C. R. (Doklady) Acad. Sci. URSS (N.S.) 31 (1941), 538–540.

\bibitem{K194103} A. Kolmogoroff, Dissipation of energy in the locally isotropic turbulence. C. R. (Doklady) Acad. Sci. URSS (N.S.) 32 (1941), 16–18.

\bibitem{K1955} N.H. Kuiper, On $C^{1}$-isometric imbeddings. I, II. Nederl. Akad. Wetensch. Proc. Ser. A 58. Indag. Math. 17 (1955), 545–556, 683–689.

\bibitem{K2006} I. Kukavica, Pressure integrability conditions for uniqueness of mild solutions of the Navier-Stokes system. J. Differential Equations 223 (2006), no. 2, 427–441.

\bibitem{L1967} O. Lady\v{z}enskaja, Uniqueness and smoothness of generalized solutions of Navier-Stokes equations. (Russian) Zap. Nau\v{c}n. Sem. Leningrad. Otdel. Mat. Inst. Steklov. (LOMI) 5 (1967), 169–185.

\bibitem{L2002} P. Lemari\'{e}-Rieusset, Recent developments in the Navier-Stokes problem. Chapman \& Hall/CRC Research Notes in Mathematics, 431. Chapman \& Hall/CRC, Boca Raton, FL, 2002.

\bibitem{L2016} P. Lemari\'{e}-Rieusset, The Navier-Stokes problem in the 21st century. CRC Press, Boca Raton, FL, 2016.

\bibitem{L1934} J. Leray, Sur le mouvement d'un liquide visqueux emplissant l'espace. Acta Math., 63(1), 193-248 (1934).

\bibitem{LM2001}  P.-L. Lions and N. Masmoudi, Uniqueness of mild solutions of the Navier-Stokes system in $L^{N}$. Comm. Partial Differential Equations 26 (2001), no. 11-12, 2211–2226. 

\bibitem{L2019} X. Luo, Stationary solutions and nonuniqueness of weak solutions for the Navier-Stokes equations in high dimensions. Arch. Ration. Mech. Anal. 233 (2019), no. 2, 701-747. 

\bibitem{MS2018} S. Modena and L. Sz\'{e}kelyhidi, Non-uniqueness for the transport equation with Sobolev vector fields. Ann. PDE 4 (2018), no. 2, Paper No. 18, 38 pp.

\bibitem{MS2019} S. Modena and L. Sz\'{e}kelyhidi, Non-renormalized solutions to the continuity equation. Calc. Var. Partial Differential Equations 58 (2019), no. 6, Paper No. 208, 30 pp. 

\bibitem{MY196501} A. Monin and A. Yaglom, Statistical fluid mechanics: mechanics of turbulence. Vol. I. Translated from the 1965 Russian original. Edited and with a preface by John L. Lumley. English edition updated, augmented and revised by the authors. Reprinted from the 1971 edition. Dover Publications, Inc., Mineola, NY, 2007.

\bibitem{MY196502} A. Monin and A. Yaglom, Statistical fluid mechanics: mechanics of turbulence. Vol. II. Translated from the 1965 Russian original. Edited and with a preface by John L. Lumley. English edition updated, augmented and revised by the authors. Reprinted from the 1975 edition. Dover Publications, Inc., Mineola, NY, 2007.

\bibitem{N1954} J. Nash, $C^{1}$ isometric imbeddings. Ann. of Math. (2) 60 (1954), 383–396.

\bibitem{O1949} L. Onsager, Statistical hydrodynamics, Nuovo Cimento (9), 6 (1949), Supplemento 2, 279–287.

\bibitem{PKV2002} B. Pearson, P.-\AA. Krogstad, and W. Van De Water, Measurements of the turbulent energy dissipation rate. Phys. Fluids, 14 (2002), no. 3, 1288–1290.

\bibitem{P1959} G. Prodi, Un teorema di unicit\`{a} per le equazioni di Navier-Stokes. Ann. Mat. Pura Appl. (4) 48 (1959), 173–182.

\bibitem{S1993} V. Scheffer, An inviscid flow with compact support in space-time. J. Geom. Anal. 3 (1993), no. 4, 343–401.

\bibitem{S1962} J. Serrin, On the interior regularity of weak solutions of the Navier-Stokes equations. Arch. Rational Mech. Anal. 9 (1962), 187–195.

\bibitem{S1997} A. Shnirelman, On the nonuniqueness of weak solution of the Euler equation. Comm. Pure Appl. Math. 50 (1997), no. 12, 1261–1286.

\bibitem{S2000} A. Shnirelman, Weak solutions with decreasing energy of incompressible Euler equations. Comm. Math. Phys. 210 (2000), no. 3, 541–603.

\bibitem{S1998} K. Sreenivasan, An update on the energy dissipation rate in isotropic turbulence. Phys. Fluids, 10 (1998), no. 2, 528–529.


\bibitem{S2013} L. Sz\'{e}kelyhidi, From isometric embeddings to turbulence. HCDTE lecture notes. Part II. Nonlinear hyperbolic PDEs, dispersive and transport equations, 63 pp., AIMS Ser. Appl. Math., 7, Am. Inst. Math. Sci. (AIMS), Springfield, MO, 2013.

\bibitem{T2001} A. Tsinober, An informal introduction to turbulence. Fluid Mechanics and its Applications, 63. Kluwer Academic Publishers, Dordrecht, 2001.

\bibitem{W2018} E. Wiedemann, Weak-strong uniqueness in fluid dynamics. Partial differential equations in fluid mechanics, 289–326, London Math. Soc. Lecture Note Ser., 452, Cambridge Univ. Press, Cambridge, 2018.

\end{thebibliography}
\end{document}